\newcommand{\PP}{L}
\newcommand{\QQ}{P}
\newcommand{\K}{Q}
\newcommand{\conv}{\mbox{conv}}
\newcommand{\cl}{\mbox{cl}}
\newcommand{\proj}{\mbox{proj}}
\newcommand{\aff}{\mbox{aff}}
\newcommand{\inte}{\mbox{int}}
\newcommand{\relint}{\mbox{relint}}
\newcommand{\rec}{\mbox{rec}}
\newcommand{\diam}{\mbox{diam}}
\def\st{\,|\,}
\newcommand{\old}[1]{{}}
\newtheorem{theorem}{Theorem}[section]
\newtheorem{corollary}[theorem]{Corollary}
\newtheorem{lemma}[theorem]{Lemma}
\newtheorem{observation}[theorem]{Observation}
\begin{document}

\title{Intersection Cuts with Infinite Split Rank}

\author{
Amitabh Basu${}^{1,2}$, \;
G\'erard Cornu\'ejols${}^{1,3,4}$\\
Fran\c{c}ois Margot${}^{1,5}$ \;
}

\date{April 2010}

\maketitle

\begin{abstract}
We consider mixed integer linear programs where free integer variables are
expressed in terms of nonnegative continuous variables.
When this model only has two integer variables,
Dey and Louveaux characterized the intersection cuts that have infinite
split rank. We show that, for any number of integer variables, the split
rank of an intersection cut generated from a bounded convex set $P$
is finite if and only if the integer points on the boundary of $P$ satisfy a certain
``2-hyperplane property''.
The Dey-Louveaux characterization is a consequence of this more general result.
\end{abstract}

\footnotetext[1] {Tepper School of Business, Carnegie Mellon University,
Pittsburgh, PA 15213.}

\footnotetext[2] {Supported by a Mellon Fellowship.}

\footnotetext[3] {LIF, Facult\'e des Sciences de Luminy,
Universit\'e de Marseille, France.}

\footnotetext[4] {Supported by  NSF grant CMMI0653419,
ONR grant N00014-09-1-0033 and ANR grant ANR06-BLAN-0375.}

\footnotetext[5] {Supported by  ONR grant N00014-09-1-0033.}
\section{Introduction.}
\label{SEC:Introduction}

In this paper, we consider mixed integer linear programs with
equality constraints expressing $m \geq 1$ free integer variables
in terms of $k \geq 1$ nonnegative continuous variables.
\begin{equation}
\label{SI}
\begin{array}{rrcl}
          & x & = & f + \displaystyle \sum_{j=1}^{k} r^j s_j \\[0.1cm]
           &  x  & \in & \mathbb{Z}^m \\
          &  s  & \in & \mathbb{R}_+^k .
\end{array}
\end{equation}

The convex hull $R$ of the solutions to \eqref{SI} is a corner polyhedron
(Gomory~\cite{gom69}, Gomory and Johnson~\cite{gj}). In the remainder we assume
$f \in \mathbb{Q}^m \setminus \mathbb{Z}^m$,
and $r^j \in \mathbb{Q}^m \setminus \left\{ 0 \right\}$. Hence $(x,s)=(f,0)$ is
not a solution of \eqref{SI}. To avoid discussing trivial cases, we
assume that $R \not= \emptyset$, and this implies that $\dim (R) =k$.
The facets of $R$ are the nonnegativity
constraints $s \geq 0$ and intersection cuts (Balas~\cite{bal}), 
namely inequalities
\begin{equation}
\label{EQ:Lineq}
\sum_{j=1}^k \psi(r^j) s_j \geq 1
\end{equation}
obtained from lattice-free convex sets $\PP \subset \mathbb{R}^m$ containing
$f$ in their interior, where $\psi$ denotes the gauge of $\PP-f$ 
(Borozan and Cornu\'ejols~\cite{bc}).
By {\em lattice-free} convex set, we mean a convex set with no point of
$\mathbb{Z}^m$ in its interior. By {\em gauge} of a convex set $\PP$ containing
the origin in its interior, we mean the function
$\gamma_{\PP} (r) = \inf \{ t > 0 \ | \ \frac{r}{t} \in \PP \}$.
Intersection cut~(\ref{EQ:Lineq}) for a given convex set $\PP$
is called {\em $\PP$-cut} for short.

When $m=2$,
Andersen, Louveaux, Weismantel and Wolsey~\cite{alww} showed that the only
intersection cuts needed arise from splits (Cook et al.~\cite{COOK}), 
triangles and quadrilaterals in the plane
and a complete characterization of the facet-defining inequalities
was obtained in Cornu\'ejols and Margot~\cite{cm}. More
 generally, Borozan and Cornu\'ejols~\cite{bc} showed that the only
intersection cuts needed in \eqref{SI} arise from full-dimensional maximal
lattice-free convex sets $\PP$. Lov\'asz~\cite{lovasz}
 showed that these sets are polyhedra with at most $2^m$ facets and that
they are
 {\em cylinders},  i.e., their recession  cone is a linear space. These
cylinders
${\PP} \subset \mathbb{R}^m$ can be written in the form
${\PP} = Q+V$ where $Q$ is a polytope of dimension at least one and $V$ is a
linear space such that $\dim (Q) + \dim (V) = m$. We say that $\PP$ is a
{\em cylinder over $Q$}. A {\em split} in $\mathbb{R}^m$ is a maximal
lattice-free cylinder over a line segment.

Let $\PP \subset \mathbb{R}^m$ be a polytope containing $f$ in its interior.
For $j = 1, \ldots, k$, let the {\em boundary point} for the ray $r^j$
be the intersection of the half-line
$\{ f + \lambda r^j \ | \  \lambda \geq 0 \}$ with the boundary of $\PP$.
We say that $\PP$ has {\em rays going into its corners} if each vertex
of $\PP$ is the boundary point for at least one of the rays $r^j$, $j= 1, \ldots, k$.

The notions of {\em split closure} and {\em split rank} were introduced
by Cook et al.~\cite{COOK} (precise definitions are
in Section~\ref{SEC:split}). They gave an example
 of \eqref{SI} with $m=2$ and $k=3$ that has an infinite split rank.
Specifically, there is a facet-defining inequality for $R$ that cannot
be deduced from a finite recursive application of the split closure
operation. This inequality is an intersection cut generated from a maximal
lattice-free triangle $L$ with integer vertices and rays going into its corners.
That triangle has vertices $(0,0)$, $(2,0)$ and $(0,2)$ and has
six integer points on its boundary. It is
a triangle of {\em Type 1} according to Dey and Wolsey~\cite{dw}.
Dey and Louveaux~\cite{DeyLou} showed that,
when $m=2$, an intersection cut has an infinite split rank if and only if
it is generated from a Type~1 triangle with rays going into its corners.
In this paper we prove a more general theorem, whose statement
relies on the following definitions.

A set $S$ of points in $\mathbb{R}^m$ is {\em 2-partitionable} if either
$|S| \le 1$ or there exists a partition of $S$ into {\em nonempty}
sets $S_1$ and $S_2$ and a split such that the points in $S_1$
are on one of its boundary hyperplanes and the points in $S_2$ are on the
other. We say that a polytope is
{\em 2-partitionable} if its integer points are 2-partitionable.

Let $\PP$ be a rational lattice-free polytope in $\mathbb{R}^m$ and let
$\PP_I$ be the convex hull of the integer points in $\PP$. We say that $\PP$ has
the {\em 2-hyperplane property} if
every face of $\PP_I$ that is not contained in a facet of $\PP$ is 2-partitionable.
Note that one of the faces of $\PP_I$ is $\PP_I$ itself and thus if $\PP$ has the
2-hyperplane property and $\PP_I$ is not contained in a facet of $\PP$, then there
exists a split containing all the integer points of $\PP$ on its boundary
hyperplanes, with at least one integer point of $\PP$ on each of the 
hyperplanes.

\begin{figure}[ht]
\centering \scalebox{0.8}{
\includegraphics{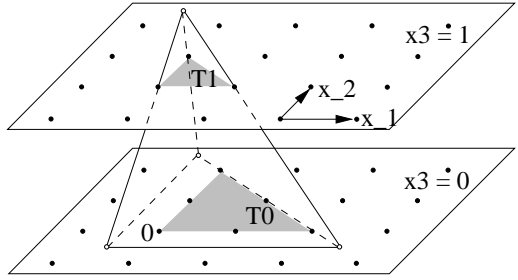}}
\caption{Illustration of the 2-hyperplane property.
}
\label{FIG:twohyp}
\end{figure}

We illustrate the 2-hyperplane property by giving an example in $\mathbb{R}^3$.
Consider the tetrahedron ${\PP} \subseteq \mathbb{R}^3$ given in Figure~\ref{FIG:twohyp}.
We will show that ${\PP}$ has the 2-hyperplane property.
Let $T_0$ be the shaded triangle with corners $(0, 0, 0), (2, 0, 0),$ and
$(0, 2, 0)$ and let $T_1$ be the shaded triangle with corners 
$(0, 0, 1), (1, 0, 1),$ and $(0, 1, 1)$. For any point $q \in \mathbb{R}^3$ 
with $1 < q_3 \le 2$, let  ${\PP}(q)$ be the tetrahedron obtained
as the intersection of the half-space $x_3 \ge 0$ with the cone with apex
$q$ and three extreme rays joining it to the corners of $T_1$.
Observe that for $t = (0, 0, 2)$, the vertices of ${\PP}(t)$ are $t$ and 
the corners of $T_0$. The tetrahedron ${\PP}$ depicted in
Figure~\ref{FIG:twohyp} is ${\PP}(p)$  for $p = (0.25, 0.25, 1.5)$. As $p$ is
in ${\PP}(t)$, and the intersection of ${\PP}(p)$ with the plane $x_3 = 1$ is 
the triangle $T_1$, it follows that the intersection of ${\PP}(p)$ with
the plane $x_3 = 0$ contains $T_0$. To check whether ${\PP}$ has the 
2-hyperplane property or not, we need to check if some of the faces of the 
convex hull ${\PP}_I$ of the integer points in ${\PP}$ are 2-partitionable or 
not. Note that ${\PP}_I$ is the convex hull of $T_1$ and $T_0$ and that a 
face of ${\PP}_I$ that is not contained in a facet of ${\PP}$ is either the 
triangle $T_1$ (which is 2-partitionable, using for example the split with 
boundary hyperplanes $x_1 = 0$ and $x_1 = 1$) or it contains an integer 
point in the plane $x_3 = 1$ and an integer point in the plane $x_3 = 0$ 
(and thus is 2-partitionable using these two planes as boundary for the 
split). As a result, ${\PP}$ has the 2-hyperplane property. 
We now give an example of a polytope ${\PP}'$ that does not have the 
2-hyperplane property. Define ${\PP}'(q)$ as ${\PP}(q)$ above, except that 
we change the half-space $x_3 \ge 0$ to $x_3 \ge -\frac{1}{2}$.
Let ${\PP}'$ be obtained by truncating ${\PP}'(t)$ by $x_3 \le \frac{3}{2}$, 
where $t = (0, 0, 2)$ as earlier. Then ${\PP}'_I$ is again the convex hull 
of $T_1$ and $T_0$. However $T_0$ is a face of ${\PP}'_I$ not contained in 
a facet of ${\PP}'$. Furthermore, $T_0$ is not 2-partitionable because it 
is not possible to find a split and a partition of the six integer points 
of $T_0$ into two nonempty sets $S_1, S_2$ with the property that 
$S_1$ lies on one boundary of the split and $S_2$ on the other. Therefore 
${\PP}'$ does not have the 2-hyperplane property.

The main result of this paper is the following theorem.

\begin{theorem}
\label{THM:GenDim}
Let $\PP$ be a rational lattice-free polytope in $\mathbb{R}^m$ containing $f$
in its interior and having rays going into its corners. The $\PP$-cut has finite split rank if and only if
$\PP$ has the 2-hyperplane property.
\end{theorem}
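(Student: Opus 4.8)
The plan is to prove the two directions separately, with the "only if" direction (finite split rank $\Rightarrow$ 2-hyperplane property) being the easier one, and the "if" direction being the substantial work.

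For the \emph{only if} direction, I would argue by contrapositive: suppose $\PP$ does not have the 2-hyperplane property, so there is a face $F$ of $\PP_I$ not contained in any facet of $\PP$ that is not 2-partitionable. The key is to exhibit a point on the boundary of $R$ (coming from the $\PP$-cut), or rather the $\PP$-cut inequality itself, that survives every split. The mechanism should be: a single split disjunction $\pi x \le \pi_0 \lor \pi x \ge \pi_0+1$ cuts the relevant slice of the corner polyhedron only if all integer points of $F$ lie on the two hyperplanes $\pi x = \pi_0$ and $\pi x = \pi_0 + 1$ (any integer point of $\PP$ strictly between would have to be in the interior of $\PP$, contradicting lattice-freeness, unless it is on a facet — and here is where "not contained in a facet of $\PP$" is used). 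Since $F$ is not 2-partitionable, no split achieves this, so after one round of split closure the cut corresponding to $F$ is not recovered; iterating, one shows inductively that the face structure responsible for the cut is preserved, so the split rank is infinite. I would make this precise using the notion of rays going into corners to pin down which intersection cut is facet-defining and to track the "trace" of $\PP$ on the lattice-free set obtained after each split operation.

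For the \emph{if} direction, assume $\PP$ has the 2-hyperplane property; I must show the $\PP$-cut has finite split rank. The natural strategy is induction on the dimension of $\PP_I$ (equivalently on the combinatorial complexity of the face lattice of $\PP_I$), using the recursive structure built into the 2-hyperplane property: every face of $\PP_I$ not inside a facet of $\PP$ is 2-partitionable, and a 2-partition of $\PP_I$ itself by a split $S$ with boundary hyperplanes $H^1, H^2$ splits the problem. Applying the split disjunction from $S$ to $R$, one obtains two polyhedra $R^1, R^2$; on each side, the relevant lattice-free set is (a restriction of) $\PP$ to a lower-dimensional affine space, or a "thinner" lattice-free set whose integer points form a proper face of $\PP_I$ — and crucially these inherit the 2-hyperplane property. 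By induction each of these has finite split rank, say bounded by $N$; then the $\PP$-cut has split rank at most $N+1$. The base case is $\dim \PP_I \le 1$ (or $\PP_I$ contained in a facet of $\PP$, or $\PP_I$ a single point), where $\PP$ is already essentially a split or the cut is dominated, so the rank is $0$ or $1$.

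The main obstacle I anticipate is the bookkeeping in the induction for the "if" direction: after applying one split, the two resulting corner polyhedra do not correspond to clean sub-instances of \eqref{SI} with a genuinely lower-dimensional lattice-free polytope — one must carefully define the restricted objects (intersecting $\PP$ with the two split hyperplanes, or with halfspaces, and re-expressing the continuous variables), verify that "rays going into corners" is preserved or can be arranged, verify that the 2-hyperplane property descends to the pieces (this is exactly where the requirement on \emph{all} faces, not just $\PP_I$, is needed — a sub-face that was not contained in a facet of $\PP$ but becomes one after restriction must be handled, and conversely), and show that a finite bound on the split rank of the pieces yields a finite bound for the original cut. Establishing that the split rank of a face-cut controls the split rank of the whole via a uniform argument, and that the recursion terminates with a genuine decrease in a well-founded measure, is the technical heart. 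I would isolate a lemma stating that if $\PP'$ is obtained from $\PP$ by intersecting with a split hyperplane then $\PP'$ has the 2-hyperplane property and $\dim \PP'_I < \dim \PP_I$, and a second lemma relating split ranks across the disjunction, and then the theorem follows by assembling these with the Dey–Louveaux base case in the plane as a sanity check.
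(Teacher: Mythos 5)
The sufficiency direction of your plan has a genuine gap at its central step. Your recursion rests on the inference: if the cut has split rank at most $N$ with respect to each of $\QQ \cap \{\pi x \le \pi_0\}$ and $\QQ \cap \{\pi x \ge \pi_0+1\}$, then it has rank at most $N+1$ (or any finite rank) with respect to $\QQ$. No such composition principle holds for split closures: the rank-$(N+1)$ closure of $\QQ$ is not contained in the convex hull of the rank-$N$ closures of the two sides, because the splits that work on one side are not applied ``conditionally'' --- convexifying across the disjunction re-raises the region strictly between the two boundary hyperplanes. The Cook--Kannan--Schrijver instance itself refutes the unrestricted principle: for the Type~1 triangle cut and the split $x_1\le 0 \vee x_1\ge 1$, the cut is valid outright for $\QQ\cap\{x_1\le 0\}$ (any $x$ with $x_1 \le 0$ lies outside $\inte(\PP)$, so the gauge inequality already holds), and a one-split computation in the lifted $(x,z)$-space of Section~\ref{SEC:space} shows it has rank at most $1$ with respect to $\QQ\cap\{x_1\ge 1\}$, yet its rank with respect to $\QQ$ is infinite. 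Restricting to splits that 2-partition $\PP_I$ comes with no argument that rescues the step, and the difficulty is exactly the same. This is why the paper does not induct ``one split per level'': it sets up a quantitative height-decrease argument on the lifted cone $\QQ^{\PP}$ ($\PP$-intersecting versus $\PP$-englobing splits, the regions $R(Q_x,M,M_0)$ and reduction coefficients, Lemmas~\ref{LEM:ITERGEN2} and \ref{LEM:Dj}), must \emph{iterate} a sequence of intersecting splits followed by an englobing split on the order of $(M-M_0)/\Delta$ times (Corollary~\ref{COR:redHeight}), and handles general $m$ by simulating Chv\'atal cuts with intersecting splits for enlarged polytopes (Theorem~\ref{LEM:enlarge}, Lemma~\ref{LEM:swiping}) inside a multiple induction on the dimension and the Chv\'atal index in Theorem~\ref{THM:SuffGEN} --- not an induction on $\dim \PP_I$. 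Note also that $\PP\cap\{\pi x\le \pi_0\}$ is in general full-dimensional, so the dimension does not drop as you claim, and Lemma~\ref{LEM:rankface}(i) gives only the unhelpful inequality (rank with respect to the smaller side is at most the rank with respect to $\QQ$), so finiteness for the pieces says nothing about $\QQ$.

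Your necessity sketch is in the spirit of the paper's Theorem~\ref{THM:Nec} --- the key observation that a split can only act on the region over a non-2-partitionable face $F$ if all integer points of $F$ lie on its boundary hyperplanes is exactly right --- but ``the face structure responsible for the cut is preserved, so iterate'' is not a workable invariant: after one round the relaxation is an arbitrary polyhedron, no longer derived from a single lattice-free set, so there is no ``trace of $\PP$'' to track. The paper fixes this by passing to the lifted cone $\QQ^{\PP}$ and tracking one number: the height over $\relint(F)$ remains strictly positive after every split, because some integer point of $F$ (hence some $x^*\in\relint(F)$) is strictly on one side of each disjunction and keeps its height, the integer vertices of $F$ always have nonnegative height, and height is concave; the rays-into-corners hypothesis is what makes the base case work. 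With that invariant your outline becomes the paper's proof; without it, the iteration step is unjustified --- though this is a repairable omission, unlike the composition step above.
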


Given  a polytope $\PP \subset \mathbb{R}^m$ containing $f$ in its interior,
let $\PP_B$ be the convex hull of the boundary points for the rays
$r^1, \ldots, r^k$.
We assume here that nonnegative combinations of the rays $r^j$ in \eqref{SI} span $\mathbb{R}^m$.
This implies that $f$ is in the interior of $\PP_B$ and that $\PP_B$ has rays going into its
corners. Moreover, the $\PP$-cut and $\PP_B$-cut are identical.
Therefore the previous theorem implies the following.

\begin{corollary}
Assume that nonnegative combinations of the rays $r^j$ in \eqref{SI} span $\mathbb{R}^m$.
Let $\PP$ be a rational lattice-free polytope in $\mathbb{R}^m$ containing $f$
in its interior. The $\PP$-cut has finite split rank if and only if
$\PP_B$ has the 2-hyperplane property.
\end{corollary}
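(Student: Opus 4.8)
The plan is to deduce the corollary from Theorem~\ref{THM:GenDim} by applying that theorem to $\PP_B$ in place of $\PP$. The point is that $\PP_B$ automatically has rays going into its corners, even when $\PP$ does not, so the extra hypothesis of Theorem~\ref{THM:GenDim} comes for free once we pass to $\PP_B$. Concretely, three things must be checked: that $\PP_B$ satisfies all hypotheses of Theorem~\ref{THM:GenDim} (rational lattice-free polytope, $f$ in its interior, rays into its corners); that the $\PP$-cut and the $\PP_B$-cut are literally the same inequality of the form~\eqref{EQ:Lineq}; and that "finite split rank" is a property of that inequality relative to the fixed system~\eqref{SI}, so that the conclusion transfers between the two descriptions of the cut.

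First I would record the elementary geometry of $\PP_B$. Write $p^j = f+\lambda_j r^j$ for the boundary point of $\PP$ along $r^j$, where $\lambda_j>0$ is rational since $f$, the $r^j$, and the facets of $\PP$ are rational; then $\PP_B=\conv\{p^1,\dots,p^k\}$ is a rational polytope. Since $\cone\{r^1,\dots,r^k\}=\mathbb{R}^m$, the same holds for $\{\lambda_1 r^1,\dots,\lambda_k r^k\}$, so $-\sum_j\lambda_j r^j$ is a nonnegative combination of the $\lambda_j r^j$; rearranging yields strictly positive coefficients $\gamma_j$ with $\sum_j\gamma_j=1$ and $\sum_j\gamma_j p^j=f$. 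Hence $f\in\relint(\PP_B)$, and because the directions $p^j-f=\lambda_j r^j$ span $\mathbb{R}^m$ the polytope $\PP_B$ is full-dimensional, so in fact $f\in\inte(\PP_B)$. Full-dimensionality together with $\PP_B\subseteq\PP$ gives $\inte(\PP_B)\subseteq\inte(\PP)$, so $\PP_B$ inherits lattice-freeness from $\PP$. Finally, every vertex of $\PP_B$ is an extreme point of a finite set, hence equals some $p^j$; since $[f,p^j]\subseteq\PP_B$ by convexity and $p^j$ lies on the boundary of $\PP$ and therefore of $\PP_B$, the point $p^j$ is exactly the boundary point of $\PP_B$ along $r^j$, so $\PP_B$ has rays going into its corners.

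The same convexity observation shows $\gamma_{\PP_B-f}(r^j)=1/\lambda_j=\gamma_{\PP-f}(r^j)=\psi(r^j)$ for every $j$, so the $\PP$-cut and the $\PP_B$-cut are one and the same inequality~\eqref{EQ:Lineq}. Since the split closure and the split rank of an inequality are defined relative to~\eqref{SI} and do not depend on which lattice-free set was used to derive the inequality, the $\PP$-cut has finite split rank if and only if the $\PP_B$-cut does; and by Theorem~\ref{THM:GenDim} applied to $\PP_B$ this holds if and only if $\PP_B$ has the 2-hyperplane property. This is the asserted equivalence.

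There is no genuine obstacle here: the substance is entirely in Theorem~\ref{THM:GenDim}, and the corollary is a reduction. The only points that need care are (i) that the boundary point of $\PP_B$ along $r^j$ coincides with that of $\PP$, where both the inclusion $\PP_B\subseteq\PP$ and the convexity of $\PP_B$ are used; (ii) that lattice-freeness descends to $\PP_B$, which would fail for an arbitrary subset but holds because $\PP_B$ is full-dimensional; and (iii) making explicit that "finite split rank" is intrinsic to the cut, so that replacing $\PP$ by $\PP_B$ is not circular.
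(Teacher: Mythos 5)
Your proposal is correct and follows essentially the same route as the paper, which deduces the corollary from Theorem~\ref{THM:GenDim} by observing that $f\in\inte(\PP_B)$, that $\PP_B$ has rays going into its corners, and that the $\PP$-cut and $\PP_B$-cut are identical; you simply supply the routine verifications the paper leaves implicit. (Minor remark: lattice-freeness descends to $\PP_B$ simply because $\inte(\PP_B)\subseteq\inte(\PP)$ for any subset, so full-dimensionality is not what is needed there.)
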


This corollary is a direct generalization of the characterization of
Dey and Louveaux for $m = 2$, as triangles of Type~1 do not have the
2-hyperplane property whereas all other lattice-free polytopes in the plane do,
as one can check using Lov\'asz'~\cite{lovasz} characterization of maximal
lattice-free convex sets in the plane.

The paper is organized as follows.
In Section~\ref{SEC:split}, we give a precise definition of split inequalities
and split rank, as well as useful related results.
In Section~\ref{SEC:space} we give an equivalent formulation of (\ref{SI}) 
that proves convenient to compute the split rank of $\PP$-cuts.
We prove one direction of Theorem~\ref{THM:GenDim} in Section~\ref{SEC:ratmax}
and we prove the other direction in Section~\ref{SEC:ratnonmax}.

\section{Split inequalities and split closure.}
\label{SEC:split}

Consider a mixed integer set 
$X = \{(x, y) \ | A x + B y \ge b, \ x \in \mathbb{Z}^p,
y \in \mathbb{R}^q\}$, where $A$ and $B$ are respectively
$m \times p$ and $m \times q$ rational matrices, and $b \in \mathbb{Q}^m$.
Let ${\K} = \{(x, y) \in \mathbb{R}^{p+q} \ | A x + B y \ge b\}$ be its linear 
relaxation.

Let $\pi \in \mathbb{Z}^p$ and $\pi_0 \in \mathbb{Z}$.
Note that all points in $X$ satisfy the {\em split disjunction induced} by
$(\pi, \pi_0)$, i.e.,
\begin{eqnarray*}
\pi x \le \pi_0 \hspace{1cm} \mbox{or} \hspace{1cm} \pi x \ge \pi_0+1 \ .
\end{eqnarray*}
The hyperplanes in $\mathbb{R}^{p+q}$ defined by
$\pi x = \pi_0$ and $\pi x = \pi_0+1$ are the
{\em boundary hyperplanes}
of the split. Conversely, two parallel hyperplanes
$H^1$ and $H^2$
with rational equations, both containing points $(x, y)$ with $x$ integer
and such that no points $(x, y)$ with $x$ integer are between them,
define a valid split $(\pi, \pi_0)$. Indeed, only $x$
variables can have nonzero coefficients in the equation of the planes and
we can assume that they are relatively prime integers. An application
of B\'ezout's Theorem~\cite{Seroul} shows that if, for $i = 1, 2$, the
equation of the hyperplane $H^i$ is given as $\pi x = h_i$, then
$|h_1 - h_2| = 1$. Define
\begin{eqnarray*}
&&{\K}^{^\le} = {\K} \cap \{(x, y) \in \mathbb{R}^{p+q} \ | \ \pi x \le \pi_0\} \ ,
\hspace{0.2cm}
{\K}^{^\ge} = {\K} \cap \{(x, y) \in \mathbb{R}^{p+q} \ | \ \pi x \ge \pi_0+1\} \\
&&{\K}(\pi, \pi_0) = \conv({\K}^{^\le} \ \cup \ {\K}^{^\ge}) \ .
\end{eqnarray*}
As $X \subseteq {\K}^{^\le} \ \cup \ {\K}^{^\ge}$, any inequality
that is valid for ${\K}(\pi, \pi_0)$
is valid for $\conv(X)$. The facets of ${\K}(\pi, \pi_0)$ that are not
valid for ${\K}$ are {\em split inequalities} obtained from the split
$(\pi, \pi_0)$. As shown by Cook et al.~\cite{COOK}, the intersection of all
${\K}(\pi, \pi_0)$ for all possible splits $(\pi, \pi_0)$ yields a polyhedron
called the {\em split closure} of ${\K}$.
Let the {\em rank-$0$ split closure} of ${\K}$ be ${\K}$ itself.
For $t = 1, 2, 3, \ldots$, the {\em rank-$t$ split closure} of ${\K}$
is obtained by taking the split closure of the rank-$(t-1)$ split closure
of ${\K}$.

Note that since ${\K}(\pi, \pi_0)$ is the convex hull of ${\K}^{^\le}$ with
${\K}^{^\ge}$, any point $(\bar x, \bar y)$ in ${\K}(\pi, \pi_0)$ is a
(possibly trivial)
convex combination of a point $p^1 \in {\K}^{^\le}$ and a point
$p^2 \in {\K}^{^\ge}$.
Moreover, if $(\bar x, \bar y)$ is neither in ${\K}^{^\le}$ nor in ${\K}^{^\ge}$, then
the segment $p^1p^2$ intersects $H^i$ in $q^i$ for $i = 1,2$. By convexity of
${\K}$, we have that $q^1 \in {\K} \cap H^1 = {\K}^{^\le} \cap H^1$ and
$q^2 \in {\K} \cap H^2 = {\K}^{^\ge} \cap H^2$. In summary, we have the following.

\begin{observation}
\label{OBS:ConvSplit}
\begin{itemize}
\item[(i)] If $(\bar x, \bar y) \in {\K}^{^\le}$ or $(\bar x, \bar y) \in {\K}^{^\ge}$,
then $(\bar x, \bar y) \in {\K}$ and $(\bar x, \bar y) \in {\K}(\pi, \pi_0)$;
\item[(ii)] If $(\bar x, \bar y) \in {\K}(\pi, \pi_0) \setminus 
({\K}^{^\le} \cup  {\K}^{^\ge})$, then
$(\bar x, \bar y)$ is a convex combination of
$(x^1, y^1) \in {\K} \cap H^1 = {\K}^{^\le} \cap H^1$
and $(x^2, y^2) \in {\K} \cap H^2 = {\K}^{\ge} \cap H^2$.
\end{itemize}
\end{observation}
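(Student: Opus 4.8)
This observation is essentially a bookkeeping consequence of the definitions given just above it, so the plan is short and there is no serious obstacle; I flag below the two or three points that deserve a sentence of care. For part (i), I would simply note that ${\K}^{\le} = {\K}\cap\{\pi x\le\pi_0\}$ and ${\K}^{\ge} = {\K}\cap\{\pi x\ge\pi_0+1\}$ are by definition subsets of ${\K}$, while ${\K}(\pi,\pi_0)=\conv({\K}^{\le}\cup{\K}^{\ge})$ contains both ${\K}^{\le}$ and ${\K}^{\ge}$; hence membership of $(\bar x,\bar y)$ in either of the two pieces immediately gives membership in ${\K}$ and in ${\K}(\pi,\pi_0)$, and nothing further is required.

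For part (ii), the first step is to invoke the elementary description of the convex hull of a union of two convex sets: for convex $C_1,C_2$, the set $\conv(C_1\cup C_2)$ is exactly the union of all segments $[c^1,c^2]$ with $c^1\in C_1$ and $c^2\in C_2$ (a finite convex combination of points of $C_1\cup C_2$ collapses, on grouping the points lying in $C_1$ and those lying in $C_2$ and using convexity of each, to a convex combination of a single point from each). Applied with $C_1={\K}^{\le}$ and $C_2={\K}^{\ge}$, this writes $(\bar x,\bar y)=\lambda p^1+(1-\lambda)p^2$ with $p^1\in{\K}^{\le}$, $p^2\in{\K}^{\ge}$, $\lambda\in[0,1]$. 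The second step locates $(\bar x,\bar y)$ inside the slab: since ${\K}^{\le},{\K}^{\ge}\subseteq{\K}$ and ${\K}$ is convex we have $(\bar x,\bar y)\in{\K}$, and combined with $(\bar x,\bar y)\notin{\K}^{\le}\cup{\K}^{\ge}$ this forces $\pi_0<\pi\bar x<\pi_0+1$, hence $\lambda\in(0,1)$ and $p^1\neq p^2$. Along the segment $[p^1,p^2]$ the quantity $\pi x$ is affine in the parameter and strictly increasing, from a value $\le\pi_0$ at $p^1$ to a value $\ge\pi_0+1$ at $p^2$; therefore the segment meets $H^1=\{\pi x=\pi_0\}$ in a single point $q^1$ and $H^2=\{\pi x=\pi_0+1\}$ in a single point $q^2$, and the intermediate value $\pi_0<\pi\bar x<\pi_0+1$ places $(\bar x,\bar y)$ on the subsegment $[q^1,q^2]$. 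Finally, convexity of ${\K}$ gives $q^1,q^2\in{\K}$, and since $H^1\subseteq\{\pi x\le\pi_0\}$ and $H^2\subseteq\{\pi x\ge\pi_0+1\}$ we conclude $q^1\in{\K}\cap H^1={\K}^{\le}\cap H^1$ and $q^2\in{\K}\cap H^2={\K}^{\ge}\cap H^2$. Setting $(x^1,y^1)=q^1$ and $(x^2,y^2)=q^2$ exhibits $(\bar x,\bar y)$ as the desired convex combination.

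I do not expect a real obstacle here; the only points worth an explicit remark are that an arbitrary point of $\conv({\K}^{\le}\cup{\K}^{\ge})$ reduces to a single joining segment (this uses convexity of each of the two pieces, and also that both are nonempty, which is automatic under the hypothesis of (ii)), that the crossing points $q^1,q^2$ lie in ${\K}$ (this uses convexity of the whole relaxation ${\K}$, not merely of its two halves), and the harmless degenerate case in which $p^1$ already lies on $H^1$ or $p^2$ on $H^2$, where the crossing point simply coincides with that endpoint. Consequently I expect the full write-up to be only a few lines long.
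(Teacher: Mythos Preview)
Your proposal is correct and follows essentially the same argument as the paper: the paper's proof is the short paragraph immediately preceding the observation, which writes $(\bar x,\bar y)$ as a convex combination of $p^1\in{\K}^{\le}$ and $p^2\in{\K}^{\ge}$, intersects the segment $p^1p^2$ with $H^1$ and $H^2$ to get $q^1,q^2$, and uses convexity of ${\K}$ to conclude. You spell out a few points (reduction to a single segment, monotonicity of $\pi x$ along it, the degenerate cases) that the paper leaves implicit, but the route is identical.
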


Let $a x + b y \ge a_0$ be a valid inequality for $\conv(X)$ and let
$t$ be the smallest nonnegative integer such that the inequality is valid
for the rank-$t$ split closure of a polyhedron $K \supseteq X$ 
or $+\infty$ if no such integer exists.
The value $t$ is the
{\em split rank} of the inequality with respect to $K$.
It is known that valid inequalities for $\conv(X)$ may have infinite split
rank with respect to $K$ (Cook et al.~\cite{COOK}).

The following lemma gives three useful properties of split ranks
of inequalities.

\begin{lemma}
\label{LEM:rankface}
Let ${\K}$ be the linear relaxation of
$X = \{(x, y) \ | A x + B y \ge b, \ x \in \mathbb{Z}^p,
y \in \mathbb{R}^q\}$.
\begin{itemize}

\item[(i)] Let $X \subseteq {\K}_1 \subseteq {\K}$. The split rank with 
respect to ${\K}_1$ of a valid inequality for $\conv(X)$ is at most its 
split rank with respect to ${\K}$;

\item[(ii)]
Let $y'$ be a subset of the $y$ variables and let ${\K}(x,y')$
be the orthogonal projection of ${\K}$ onto the variables $(x, y')$.
Consider a valid inequality $\cal{I}$ for $\conv(X)$
whose coefficients for the $y$ variables not in $y'$
are all 0. The split rank of inequality $\cal{I}$ with respect to ${\K}(x,y')$ is greater than or equal to its split rank  with respect to ${\K}$.

\item[(iii)] Assume that all points in ${\K}$ satisfy an equality.
Adding any multiple of this equality to a valid inequality for $\conv(X)$
does not change the split rank of the inequality with respect to ${\K}$.
\end{itemize}

\end{lemma}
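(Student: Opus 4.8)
The plan is to prove the three items separately, each resting on one elementary fact: a single split step ${\K}\mapsto{\K}(\pi,\pi_0)$ commutes with orthogonal projection onto the retained coordinates --- because a split constrains only the integer variables $x$ --- and the split-closure operation ${\K}\mapsto\bigcap_{(\pi,\pi_0)}{\K}(\pi,\pi_0)$ is monotone under inclusion. For (i), I would first note that ${\K}_1\subseteq{\K}$ forces ${\K}_1^{^\le}\subseteq{\K}^{^\le}$ and ${\K}_1^{^\ge}\subseteq{\K}^{^\ge}$, hence ${\K}_1(\pi,\pi_0)=\conv({\K}_1^{^\le}\cup{\K}_1^{^\ge})\subseteq\conv({\K}^{^\le}\cup{\K}^{^\ge})={\K}(\pi,\pi_0)$ for every split $(\pi,\pi_0)$; intersecting over all splits shows that the rank-$1$ split closure of ${\K}_1$ is contained in that of ${\K}$, and an easy induction on $t$ gives the same containment for the rank-$t$ split closures. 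Since $X\subseteq{\K}_1\subseteq{\K}$, both split ranks are well defined, and any inequality valid for the rank-$t$ split closure of ${\K}$ is then valid for the smaller rank-$t$ split closure of ${\K}_1$, for every $t$ --- exactly the asserted inequality between split ranks.

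For (ii), let $\proj$ denote the orthogonal projection onto $(x,y')$. The facts I would use are $\proj\bigl({\K}\cap\{\pi x\le\pi_0\}\bigr)=\proj({\K})\cap\{\pi x\le\pi_0\}$ (valid because the split inequality involves only the retained coordinates $x$) and $\proj(\conv(A\cup B))=\conv(\proj(A)\cup\proj(B))$; together they give $\proj\bigl({\K}(\pi,\pi_0)\bigr)=\bigl(\proj{\K}\bigr)(\pi,\pi_0)$ for any polyhedron ${\K}$ and any split. Since $\proj$ maps an intersection into the intersection of the projections, this yields $\proj(\text{split closure of }{\K})\subseteq\text{split closure of }\proj({\K})$; iterating, and applying the monotonicity from (i) at each step, shows that $\proj$ of the rank-$t$ split closure of ${\K}$ is contained in the rank-$t$ split closure of ${\K}(x,y')$, for every $t$. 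Finally, since ${\cal I}$ has zero coefficients on the $y$-variables outside $y'$, it is valid for a convex set exactly when it is valid for the projection of that set onto $(x,y')$; hence if ${\cal I}$ is valid for the rank-$t$ split closure of ${\K}(x,y')$, it is valid for $\proj$ of the rank-$t$ split closure of ${\K}$ (a subset of the former), and therefore for the rank-$t$ split closure of ${\K}$ itself. Thus the split rank with respect to ${\K}(x,y')$ is at least the split rank with respect to ${\K}$.

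For (iii), convexity of ${\K}$ gives ${\K}(\pi,\pi_0)=\conv({\K}^{^\le}\cup{\K}^{^\ge})\subseteq{\K}$, so by induction every rank-$t$ split closure of ${\K}$ lies in ${\K}$ and hence satisfies the postulated equality identically. Adding any multiple of that equality to an inequality does not affect whether a point of any rank-$t$ split closure of ${\K}$ satisfies it, so validity at each rank --- and therefore the split rank --- is unchanged.

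I expect the only real subtlety to be the direction of the inclusion in (ii): projection sends an intersection \emph{into}, not onto, the intersection of the projections, so the argument cannot rest on a clean ``projection commutes with the split closure'' identity and must instead be threaded through the monotonicity of (i) together with the fact that ${\cal I}$ is insensitive to the coordinates being projected away. Parts (i) and (iii) are routine bookkeeping.
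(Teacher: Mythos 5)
Your proposal is correct and follows essentially the same route as the paper: monotonicity of the split closure for (i), commutation of projection with convex hull (and with constraints on the retained variables) plus the one-sided inclusion for projections of intersections for (ii), and invariance of validity under adding multiples of an equality satisfied by every split closure for (iii). The subtlety you flag in (ii) is handled exactly as in the paper, via the containment of the projected rank-$t$ closure in the rank-$t$ closure of ${\K}(x,y')$ together with the fact that $\cal{I}$ is insensitive to the projected-out variables.
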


\begin{proof}
(i) Let $(\pi, \pi_0)$ be a split on the $x$ variables. We have
\begin{eqnarray*}
{\K}_1^{^\le} \subseteq {\K}^{^\le},
\hspace{1cm} \mbox{and}  \hspace{1cm}
{\K}_1^{^\ge} \subseteq {\K}^{^\ge} \ .
\end{eqnarray*}
It follows that the split closure of ${\K}_1$ is contained in the
split closure of ${\K}$ and that, for each $t = 0, 1, 2, \ldots$,
the rank-$t$ split closure of ${\K}_1$ is contained in the rank-$t$ split closure
of ${\K}$.

(ii) Let $\proj$ be the operation of projecting orthogonally onto the
variables $(x, y')$. It follows from the definitions of projection and convex hull that
the operations of taking the projection and taking the convex hull commute. Therefore we have,
for any split $(\pi, \pi_0)$ on the $x$ variables,
\begin{eqnarray*}
\proj\left(\conv({\K}^{^\le} \cup {\K}^{^\ge})\right) =
\conv\left(\proj({\K}^{^\le}) \cup \proj({\K}^{^\ge})\right) =
\conv\left({\K}(x,y')^{^\le} \cup {\K}(x,y')^{^\ge}\right) \ ,
\end{eqnarray*}
with the validity of the last equality coming from the fact that
none of the variables involved in the disjunction are projected out.
Hence, for all $t = 0, 1, 2, \ldots$, the projection $\proj$ of the
rank-$t$ split closure of ${\K}$ is contained in the rank-$t$ split closure of
${\K}(x,y')$ as the projection of an intersection of polyhedra is contained in the intersection of their projections.
The result then follows from the fact that inequality $\cal{I}$ is valid for
a polyhedron ${\K}'$ in the $(x, y)$-space if and only if it is
valid for $\proj({\K}')$.

(iii) For any $t= 0, 1, 2, \ldots$, all points in the rank-$t$ split closure of
${\K}$ satisfy the equality. An inequality is valid for the rank-$t$ split
closure of ${\K}$ if and only if the inequality obtained by adding to it
any multiple of the equality is.
\end{proof}

Let ${\K}_x \subseteq \mathbb{R}^p$ be a polyhedron 
where $\mathbb{R}^p$ is the space
of integer variables of the mixed integer set $X$. Assume that ${\K}_x$
is rational and full-dimensional. For each facet $F$ of ${\K}_x$
there exists a split $(\pi(F), \pi_0(F))$ with boundary hyperplanes
$H^1$ and $H^2$ parallel to $F$, with $F$ between $H^1$ and $H^2$
and with some points in ${\K}_x$ strictly between $F$ and $H^2$.
(Note that, if the hyperplane supporting $F$
contains an integer point, then $H^1$ supports $F$.)
Let the {\em width of the split} $(\pi(F), \pi_0(F))$ be the
Euclidean distance between $F$ and $H^2$.

As earlier, let ${\K} = \{(x, y) \in \mathbb{R}^{p+q} \ | A x + B y \ge b\}$.
 Performing a {\em round of splits around ${\K}_x$} on ${\K}$ means generating 
the intersection ${\K}'$ of ${\K}(\pi(F), \pi_0(F))$ for all facets $F$ of 
${\K}_x$. Note that if ${\K}$ contains the rank-$t$ split closure of
an arbitrary polytope ${\K}^*$, then  ${\K}'$ contains the rank-$(t+1)$ split
closure of ${\K}^*$.
Define the {\em width of a round of splits} around ${\K}_x$
as the minimum of the width of the splits $(\pi(F), \pi_0(F))$ for
all facets $F$ of ${\K}_x$.

\section{Changing space to compute the split rank.}
\label{SEC:space}

In the remainder of this paper, we will use $\QQ$ to denote 
the linear relaxation of \eqref{SI}.
Given a lattice-free polytope $\PP \subseteq \mathbb{R}^m$ containing $f$ in its 
interior, our goal is to compute the split rank of the $\PP$-cut with respect 
to $\QQ$. We show that this rank can be computed in another space that we
find convenient.

Let ${\QQ}^{\PP}(x, s, z)$ be the polyhedron obtained from ${\QQ}$
by adding one equation corresponding to the ${\PP}$-cut
(\ref{EQ:Lineq}) with a free continuous variable $z$ representing the
difference between its left and right-hand sides:
\begin{equation}
\label{SIz}
\begin{array}{rrcl}
          & x & = & f + \displaystyle \sum_{j=1}^{k} r^j s_j \\[0.1cm]
          & z & = & 1 - \displaystyle \sum_{j=1}^{k} \psi(r^j) s_j  \\[0.1cm]
          & x & \in & \mathbb{R}^m  \\
          & s & \in & \mathbb{R}_+^k \\
          & z & \in & \mathbb{R} \ .
\end{array}
\end{equation}

Clearly, $\QQ$ is the orthogonal projection of ${\QQ}^{\PP}(x, s, z)$ onto the 
$(x, s)$-space. As the relation between $\QQ$ and ${\QQ}^{\PP}(x,s,z)$ is a bijection projecting or adding a single continuous variable $z$, the split rank of 
(\ref{EQ:Lineq}) with respect to ${\QQ}$ or ${\QQ}^{\PP}(x, s, z)$ are identical.
Let ${\QQ}^{\PP}(x, z)$ be the orthogonal projection of ${\QQ}^{\PP}(x, s, z)$ 
onto the $(x, z)$-space. 
By Lemma~\ref{LEM:rankface} (iii), inequalities (\ref{EQ:Lineq}) and $z \le 0$ 
have the same split rank with respect to ${\QQ}^{\PP}(x, s, z)$.
By Lemma~\ref{LEM:rankface} (ii), the split rank of the inequality
$z \le 0$ for ${\QQ}^{\PP}(x, s, z)$ is smaller than or equal to its rank for ${\QQ}^{\PP}(x, z)$.
We thus have the following:

\begin{observation}
\label{OBS:rankQxz}
Let ${\PP} \subseteq \mathbb{R}^m$ be a lattice-free polytope containing $f$
in its interior and let ${\QQ}$ be the linear relaxation of \eqref{SI}.
The split rank of the ${\PP}$-cut (\ref{EQ:Lineq})
with respect to ${\QQ}$ is smaller than or equal to the split rank of the inequality
$z \le 0$ with respect to ${\QQ}^{\PP}(x, z)$.
\end{observation}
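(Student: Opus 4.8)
The plan is to chain together three facts, each an instance of Lemma~\ref{LEM:rankface}, that relate the split rank of the $\PP$-cut~(\ref{EQ:Lineq}) with respect to $\QQ$ to the split rank of $z\le 0$ with respect to $\QQ^{\PP}(x,z)$, using $\QQ^{\PP}(x,s,z)$ as an intermediate polyhedron. There is no genuinely hard step; the proof is mostly bookkeeping about which variables are present, which are projected out, and in which direction each part of Lemma~\ref{LEM:rankface} pushes the rank.

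First I would argue that the split rank of~(\ref{EQ:Lineq}) with respect to $\QQ$ equals its split rank with respect to $\QQ^{\PP}(x,s,z)$. The map sending $(x,s)$ to $\bigl(x,\,s,\,1-\sum_{j=1}^{k}\psi(r^j)s_j\bigr)$ is an affine bijection from $\QQ$ onto $\QQ^{\PP}(x,s,z)$ that leaves the $x$-variables untouched; since every split disjunction involves only the $x$-variables, this bijection intertwines the split-closure operator and hence all rank-$t$ split closures, so the two ranks coincide. One direction (rank with respect to $\QQ$ at least rank with respect to $\QQ^{\PP}(x,s,z)$) also follows from Lemma~\ref{LEM:rankface}(ii) applied to the projection onto the $(x,s)$-space, but we will in fact only use the opposite inequality, which is exactly the content of the bijection argument.

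Next I would pass from~(\ref{EQ:Lineq}) to $z\le 0$ without changing the rank, using Lemma~\ref{LEM:rankface}(iii): every point of $\QQ^{\PP}(x,s,z)$ satisfies the equality $z=1-\sum_{j=1}^{k}\psi(r^j)s_j$, and adding this equality to~(\ref{EQ:Lineq}) --- that is, to $\sum_{j=1}^{k}\psi(r^j)s_j-1\ge 0$ --- yields exactly $-z\ge 0$. Hence~(\ref{EQ:Lineq}) and $z\le 0$ have the same split rank with respect to $\QQ^{\PP}(x,s,z)$. Finally I would project out the $s$-variables: $\QQ^{\PP}(x,z)$ is by definition the orthogonal projection of $\QQ^{\PP}(x,s,z)$ onto the $(x,z)$-space, and the inequality $z\le 0$ has zero coefficients on every $s_j$, so Lemma~\ref{LEM:rankface}(ii) gives that the split rank of $z\le 0$ with respect to $\QQ^{\PP}(x,s,z)$ is at most its split rank with respect to $\QQ^{\PP}(x,z)$. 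Combining the three steps proves the claim.

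The only point that deserves care, and the closest thing to an obstacle, is the bijection argument in the first step: one must be explicit that adjoining a free continuous variable defined by an affine equation in the others changes neither the family of admissible splits nor the polyhedron up to affine isomorphism, so that the recursive split-closure construction is transported across the bijection verbatim. Everything else is a direct invocation of the already-established Lemma~\ref{LEM:rankface}.
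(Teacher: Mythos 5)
Your proposal is correct and follows essentially the same route as the paper: pass from $\QQ$ to ${\QQ}^{\PP}(x,s,z)$ via the bijection that adjoins the single continuous variable $z$, use Lemma~\ref{LEM:rankface}~(iii) to trade the $\PP$-cut for $z \le 0$, and use Lemma~\ref{LEM:rankface}~(ii) to project out the $s$-variables onto ${\QQ}^{\PP}(x,z)$. Your elaboration of the bijection step only makes explicit what the paper asserts in one sentence.
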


Let $0^k \in \mathbb{R}^k$ be the zero vector and let $e^j \in \mathbb{R}^k$ 
be the unit vector in direction $j$.
Observe that ${\QQ}^{\PP}(x, s, z)$ is a cone with apex $(f, 0^k, 1)$ and
extreme rays $ \{ (r^j, e^j,$ $-\psi(r^j)) \ | \ j = 1, 2, \ldots, k\}$.
As $(f, 1)$ is a vertex of ${\QQ}^{\PP}(x, z)$, the latter is
also a pointed cone. Its apex is $(f, 1)$ and its extreme rays are among
$\{(r^j, -\psi(r^j)) \ | \ j = 1, 2, \ldots, k\}$. 
Note also that if we embed ${\PP}$ in the hyperplane $z = 0$, then, for all
$j = 1, 2, \ldots, k$, the point
$p^j = f + \frac{1}{\psi(r^j)} r^j$ is on the boundary of ${\PP}$. The 
intersection of ${\QQ}^{\PP}(x, z)$ with the hyperplane $z = 0$ is the 
convex hull of the points $p^j$ for $j = 1, \ldots , k$.

Consider the pointed cone ${\QQ}^{\PP}$ with apex $(f, 1)$ and extreme rays 
joining $f$ to the vertices of ${\PP}$ embedded in the hyperplane $z  = 0$ 
(Figure~\ref{FIG:QPrays}).
Note that ${\QQ}^{\PP}(x, z)$ and ${\QQ}^{\PP}$ have the same apex and that all 
the extreme rays of ${\QQ}^{\PP}(x, z)$ are convex combinations of those of 
${\QQ}^{\PP}$. Thus, ${\QQ}^{\PP}(x, z) \subseteq {\QQ}^{\PP}$ and 
${\QQ}^{\PP}(x, z) = {\QQ}^{\PP}$ if and only if ${\PP}$ has rays going into 
its corners.

\begin{figure}[ht]
\centering \scalebox{0.8}{
\includegraphics{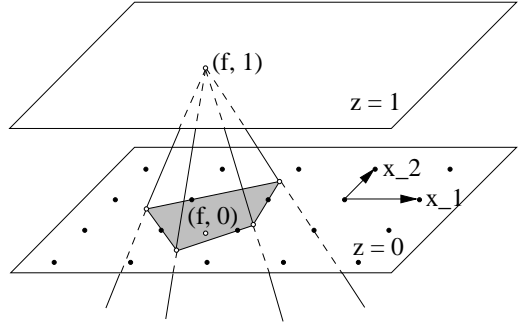}}
\caption{Illustration for the relation between ${\QQ}^{\PP}$,
${\QQ}^{\PP}(x, z)$ and  ${\PP}$. Polytope ${\PP}$
(shaded) is embedded in the plane $z = 0$ and contains $(f, 0)$;
cone ${\QQ}^{\PP}$ has apex $(f, 1)$ and
extreme rays joining $(f, 1)$ to the corners of ${\PP}$;
cone ${\QQ}^{\PP}(x, z)$ (not drawn) has apex $(f, 1)$ and
extreme rays joining $(f, 1)$ to points on the boundary of ${\PP}$.
}
\label{FIG:QPrays}
\end{figure}

Let $H$ be a polyhedron in the $(x, z)$-space.
For any $\bar x \in \mathbb{R}^m$, define the {\em height} of $\bar x$
with respect to $H$
as $\max\{\bar z \ |\ (\bar x, \bar z) \in  H\}$, with the convention that
this number may be $+\infty$ if $\{(\bar x, z) \in H\}$ is unbounded in the
direction of the $z$-unit vector or $-\infty$ if the maximum is taken
over an empty set. Define the {\em height} of $H$ as the maximum height of
$\bar x \in \mathbb{R}^m$ in $H$.
Observe that if the height of $H$ is $ < +\infty$ then the height with
respect to $H$ is a concave function over $\mathbb{R}^m$.
Note that the split rank of the inequality
$ z \le 0$ with respect to $H$ is at most $t$ if and only if  the rank-$t$ 
split closure of $H$ has height at most zero.

By Lemma~\ref{LEM:rankface} (i),
the split rank of $z \le 0$ with respect to ${\QQ}^{\PP}(x, z)$ is at most its split rank with
respect to ${\QQ}^{\PP}$.
With the help of Observation~\ref{OBS:rankQxz}, we get:

\begin{observation}
\label{OBS:rankQL}
Let ${\PP} \subseteq \mathbb{R}^m$ be a lattice-free polytope containing $f$
in its interior.
If the rank-$t$ split closure of ${\QQ}^{\PP}$ has height at most zero, then
the split rank of the ${\PP}$-cut (\ref{EQ:Lineq}) with respect to ${\QQ}$
is at most $t$. 
\end{observation}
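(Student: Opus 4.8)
The plan is simply to assemble the pieces already established in Section~\ref{SEC:space}. Write $r$ for the split rank of the ${\PP}$-cut~(\ref{EQ:Lineq}) with respect to ${\QQ}$; the goal is to show $r \le t$ under the hypothesis that the rank-$t$ split closure of ${\QQ}^{\PP}$ has height at most zero.

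First I would invoke Observation~\ref{OBS:rankQxz}, which already gives $r \le r_1$, where $r_1$ is the split rank of the inequality $z \le 0$ with respect to ${\QQ}^{\PP}(x,z)$; this reduces the task to bounding $r_1$ by $t$. Next I would use the inclusion ${\QQ}^{\PP}(x,z) \subseteq {\QQ}^{\PP}$ recorded above — the two cones share the apex $(f,1)$ and every extreme ray of ${\QQ}^{\PP}(x,z)$ is a convex combination of extreme rays of ${\QQ}^{\PP}$, so no ``rays into corners'' hypothesis is needed here, only the inclusion. Taking $X$ to be the mixed-integer set of~(\ref{SIz}) projected to the $(x,z)$-space, we have $X \subseteq {\QQ}^{\PP}(x,z) \subseteq {\QQ}^{\PP}$, and $z \le 0$ is valid for $\conv(X)$ precisely because ${\PP}$ is lattice-free (equivalently, the ${\PP}$-cut is valid for~(\ref{SI})). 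Then Lemma~\ref{LEM:rankface}(i), applied with ${\K}_1 = {\QQ}^{\PP}(x,z)$ and ${\K} = {\QQ}^{\PP}$, yields $r_1 \le r_2$, where $r_2$ is the split rank of $z \le 0$ with respect to ${\QQ}^{\PP}$. Finally I would appeal to the equivalence noted just before the statement: for any polyhedron $H$ in $(x,z)$-space, the split rank of $z \le 0$ with respect to $H$ is at most $t$ if and only if the rank-$t$ split closure of $H$ has height at most zero. Applying this to $H = {\QQ}^{\PP}$ and using the hypothesis gives $r_2 \le t$, and chaining the three inequalities gives $r \le r_1 \le r_2 \le t$.

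I do not anticipate a genuine obstacle; the statement is essentially a bookkeeping corollary of the preceding observations and lemma. The one point that deserves care is the middle step: one must pin down which mixed-integer set plays the role of the $X$ in Lemma~\ref{LEM:rankface}(i) and verify both the chain of inclusions $X \subseteq {\QQ}^{\PP}(x,z) \subseteq {\QQ}^{\PP}$ and the validity of $z \le 0$ on $\conv(X)$. With the $(x,s)\leftrightarrow(x,z)$ dictionary of Section~\ref{SEC:space} in place, this is routine, and everything else is a direct citation of Observation~\ref{OBS:rankQxz}, Lemma~\ref{LEM:rankface}(i), and the height/split-rank equivalence.
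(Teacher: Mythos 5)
Your proposal is correct and follows essentially the same route as the paper: it chains Observation~\ref{OBS:rankQxz}, Lemma~\ref{LEM:rankface}(i) applied to the inclusion ${\QQ}^{\PP}(x,z) \subseteq {\QQ}^{\PP}$, and the height/split-rank equivalence for $z \le 0$. Your extra care in identifying the underlying mixed-integer set and the validity of $z \le 0$ for its convex hull is exactly the (implicit) justification the paper relies on, so there is nothing to correct.
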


\section{Proof of necessity.}
\label{SEC:ratmax}

In this section, we prove the ``only if'' part of Theorem~\ref{THM:GenDim}.
For a polyhedron ${\K}$, we denote by $\inte({\K})$ (resp. $\relint({\K})$) the
interior (resp. relative interior) of ${\K}$.

\begin{theorem}
\label{THM:Nec}
Let ${\PP}$ be a rational lattice-free polytope in $\mathbb{R}^m$ containing
$f$ in its interior and having rays going into its corners.
If the ${\PP}$-cut has finite split rank with respect to ${\QQ}$, 
then every face of ${\PP}_I$ that is
not contained in a facet of ${\PP}$ is 2-partitionable.
\end{theorem}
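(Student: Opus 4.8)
The plan is to prove the contrapositive: assuming there is a face $G$ of $\PP_I$ that is not contained in any facet of $\PP$ and is \emph{not} 2-partitionable, I will construct an obstacle showing the $\PP$-cut cannot be removed by finitely many rounds of splits. By Observation~\ref{OBS:rankQL} (and since $\PP$ has rays going into its corners, so that $\QQ^{\PP}(x,z) = \QQ^{\PP}$) it suffices to show that the rank-$t$ split closure of $\QQ^{\PP}$ has strictly positive height for every $t$. The natural strategy is an invariant argument: I would produce a point (or a family of points) lying above the hyperplane $z = 0$ that survives every round of splits. Concretely, I would first reduce to the affine hull of $G$: restrict attention to the subcone of $\QQ^{\PP}$ over $\conv(G)$, i.e. the cone with apex $(f,1)$ (or a suitable apex above $\aff(G)$) and extreme rays into the vertices of $G$, which is itself of the form $\QQ^{G}$ for a lower-dimensional lattice-free polytope situation; Lemma~\ref{LEM:rankface}(i)--(ii) lets me pass to this restriction without decreasing the split rank. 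So WLOG $\PP_I = \PP_B$ and $\PP_I$ is not 2-partitionable and not contained in a facet of $\PP$.

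Next I would analyze what a single split $(\pi,\pi_0)$ does to the height function over $\conv(\PP_I)$. The key geometric fact: if a split has both boundary hyperplanes meeting $\conv(\PP_I)$, then because $\PP_I$ is not 2-partitionable, the partition of the integer points of $\PP_I$ induced by ``$\le \pi_0$'' versus ``$\ge \pi_0+1$'' is not realized with all points of each class on the respective boundary hyperplane — so at least one integer point of $\PP_I$ lies strictly inside one of the two halfspaces, away from the split boundaries. I want to leverage this to find a point $\bar x$ in the relative interior of $\conv(\PP_I)$ whose height stays bounded away from $0$ after the split. The cleanest route is to track the point $\bar x^* \in \relint(\conv(\PP_I))$ realizing the maximum height; using Observation~\ref{OBS:ConvSplit}(ii), after applying split $(\pi,\pi_0)$ any point of the new body is a convex combination of two points lying on the two boundary hyperplanes, both still in $\QQ^{\PP}$, and by concavity of the height function the drop in height is controlled by how far $\bar x^*$ is from the split boundaries — which, by non-2-partitionability, cannot be made arbitrarily small uniformly.

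To make the invariant genuinely robust against \emph{unboundedly many} rounds, I would not track a single height value but rather a quantitative lower bound of the form: for each $t$ there is $\varepsilon_t > 0$ with height $\ge \varepsilon_t$, and show $\varepsilon_t$ does not decay to $0$. This is where the combinatorial core enters: since the integer points of $\PP_I$ form a finite set that is not 2-partitionable, there is a \emph{finite} list of ``dangerous'' splits (those whose two boundary hyperplanes each contain an integer point of $\PP_I$), and no such split separates the integer points the way 2-partitionability would require — so for \emph{every} such split some integer point of $\PP_I$ sits strictly between the boundary hyperplanes, hence has strictly positive height in the split body; any split that is \emph{not} of this dangerous type either leaves all of $\conv(\PP_I)$ on one side (height unchanged on it) or has a boundary hyperplane through no integer point, and one checks such a split cannot cut the whole top face down to height $0$ in one step. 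The compactness of $\PP_I$ and finiteness of the relevant integer points give a uniform $\varepsilon > 0$ working for all $t$.

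The main obstacle I anticipate is exactly this uniformity: a naive argument shows each individual round of splits only lowers the height ``a little,'' but one must rule out the heights tending to $0$ in the limit. Handling this requires the observation that the integer points of $\PP_I$ themselves (finitely many, fixed) cannot all be driven to height $0$ simultaneously — an integer point $v$ of $\PP_I$, viewed as $(v,0) \in \QQ^{\PP}$ lies on $z=0$ already, so instead I should track points slightly \emph{above} $z = 0$, e.g. $(v, \delta)$ for small $\delta$, lying in $\QQ^{\PP}$, and show that for at least one integer vertex $v$ of $G$ the point $(v,\delta)$ survives every dangerous split (it is on a boundary hyperplane of the split, hence in the split body) while the non-dangerous splits are too ``coarse'' near $v$ to remove it in finitely many steps. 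Threading the inductive bookkeeping so that the same $v$, or at least \emph{some} $v$ from the finite set, always works — this is the delicate point, and I expect it to occupy the bulk of the argument; the Dey--Louveaux $m=2$ case (Type~1 triangle) should serve as the guiding example for setting up the invariant correctly.
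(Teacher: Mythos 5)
Your setup (argue by contradiction, work with heights over $\QQ^{\PP}$, use Observation~\ref{OBS:ConvSplit}(ii), concavity of the height, and non-2-partitionability to find integer points off the split boundaries) overlaps with the paper's proof, but the obstacle you identify as the bulk of the work — producing a uniform $\varepsilon>0$ such that the rank-$t$ split closure of $\QQ^{\PP}$ has height at least $\varepsilon$ for \emph{every} $t$ — is both unnecessary and unattainable, and this is a genuine gap. It is unnecessary because the hypothesis is that the $\PP$-cut has \emph{finite} split rank: passing from $\QQ$ to $\QQ^{\PP}(x,s,z)$ (Lemma~\ref{LEM:rankface}(iii)) and projecting to $(x,z)$, using that the split closure of a polyhedron is a polyhedron and that rays go into the corners so that $\QQ^{\PP}(x,z)=\QQ^{\PP}$, finite rank yields one \emph{finite} sequence of splits after which the height of $\QQ^{\PP}$ is at most $0$; to reach a contradiction it suffices to exhibit a point whose height stays strictly positive after each of these finitely many splits, and this positivity is allowed to decay — no uniform bound over all $t$ is needed. (Note also that your appeal to Observation~\ref{OBS:rankQL} is backwards: it gives the sufficiency direction, and the reduction just described is what replaces it here.) It is unattainable because in the Cook--Kannan--Schrijver Type~1 example the maximum height of the rank-$t$ closures tends to $0$ as $t\to\infty$ while remaining positive, so no uniform $\varepsilon$ exists even when the split rank is infinite; the "finitely many dangerous splits plus compactness" argument you sketch cannot produce one.

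What is missing is the actual invariant and the two facts that drive it. Fix a face $F$ of $\PP_I$ that is not 2-partitionable and not contained in a facet of $\PP$ (so $\relint(F)\subseteq\inte(\PP)$ and every point of $\relint(F)$ starts with positive height), and show by induction along the finite split sequence that \emph{every} point of $\relint(F)$ keeps strictly positive height. The two ingredients are: (a) the vertices of $F$ are integer points, hence satisfy every split disjunction and keep height at least $0$ in every intermediate body; by concavity of the height, so does every point of the relative boundary of $F$; (b) for each split, either $F$ lies in one of the two boundary hyperplanes (Observation~\ref{OBS:ConvSplit}(i) then preserves heights on $F$), or — because $F$ is the convex hull of its integer points and is not 2-partitionable — some integer point of $F$ lies strictly on one side of the disjunction, hence so does some $x^*\in\relint(F)$, whose (inductively positive) height is unchanged; writing any $\bar x\in\relint(F)$ as a convex combination of $x^*$ and a relative-boundary point of $F$ and using concavity again gives positive height at $\bar x$. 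Your proposal gestures at (b) but never isolates (a), and tracking points $(v,\delta)$ above an integer point $v$ cannot work as stated since $(v,\delta)\notin\QQ^{\PP}$ for $\delta>0$ (integer points of the lattice-free $\PP$ lie on its boundary, at height $0$). The preliminary "WLOG" reduction to the subcone over $\conv(G)$ via Lemma~\ref{LEM:rankface}(i) is also misapplied (that lemma requires the mixed-integer set to be contained in the smaller relaxation, which the subcone violates), but the cleaner point is that no such reduction is needed: the induction above works directly in $\QQ^{\PP}$.
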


\begin{proof}
Assume that the ${\PP}$-cut has split rank $k$ for some finite $k$.
Thus, the height of the rank-$k$ split closure of
${\QQ}^{\PP}(x,s,z)$ is at most zero. By a theorem of Cook et al.~\cite{COOK},
the split closure of a polyhedron is a polyhedron. Therefore
there exists a finite number $t$ of
splits $(\pi^1, \pi^1_0), \ldots, (\pi^t, \pi^t_0)$ such that applying these
splits to ${\QQ}^{\PP}$ in that order reduces its height to zero or less.
Let ${\K}^0 := {\QQ}^{\PP}$ and ${\K}^j := {\K}^{j-1}(\pi^j, \pi^j_0)$ for 
$j = 1, 2, \ldots, t$.

Suppose for a contradiction that there exists a face $F$ of ${\PP}_I$
that is not 2-partitionable and
$F$ not contained in a facet of ${\PP}$.
As $F$ is not 2-partitionable, it must contain at least two integer points
and thus $\relint(F) \neq \emptyset$.
We claim that any point $\bar x \in \relint(F)$ has positive
height with respect to ${\K}^j$ for $j = 0, 1, \ldots, t$, a contradiction.

We prove the claim by induction on $j$.
For $j = 0$, the result follows from the fact that $\bar x \in \inte ({\PP})$
and every point in $\inte ({\PP})$ has a positive height with respect to ${\QQ}^{\PP}$.
Indeed, any point $\bar x \in \inte ({\PP})$ can be written as
$\bar x = \lambda f + (1-\lambda) x^*$ where $x^*$ is a point on the boundary
of ${\PP}$ with $0 < \lambda \le 1$. By convexity of ${\QQ}^{\PP}$, the height of $\bar x$
with respect to ${\QQ}^{\PP}$ is at least $\lambda \cdot 1 + (1-\lambda) \cdot 0 > 0$,
as the height of $f$ (resp. $x^*$) with respect to ${\QQ}^{\PP}$ is 1 (resp. 0).

Suppose now that $j > 0$ and that
the claim is true for $j-1$. If $F$ is contained in one of the boundary
hyperplanes of $(\pi^j, \pi^j_0)$ then $F$ is contained in ${\K}^{j-1^\le}$
or ${\K}^{j-1^\ge}$ and Observation~\ref{OBS:ConvSplit}~(i) shows that 
the height of any $\bar x \in \relint(F)$
with respect to ${\K}^j$ and ${\K}^{j-1}$ is identical, proving the claim for $j$.
Otherwise, as $F$ is not 2-partitionable, there exists an integer point $\bar p$
of $F$ that is not on the boundary hyperplanes of $(\pi^j, \pi^j_0)$.
Since $\bar p$ is integer, it is strictly on one of the two sides of the
split disjunction implying that there exists a point $x^* \in \relint(F)$
that is also strictly on one of the two sides of the split disjunction.
The height
of $x^*$ with respect to ${\K}^j$ and ${\K}^{j-1}$ is identical and positive
by induction hypothesis. All points on the relative
boundary of $F$ have non-negative height with respect to ${\K}^j$
as they are convex combinations of vertices of $F$ that have height zero,
and the height is a concave function.
As any point ${\bar x} \in \relint(F)$ is a convex combination of $x^*$
and a point on the boundary of $F$ with a positive coefficient for $x^*$,
the height of $\bar x$ with respect to ${\K}^j$ is positive.
This proves the claim.\end{proof}

\section{Proof of sufficiency.}
\label{SEC:ratnonmax}

Recall that $P$ denotes the linear relaxation of \eqref{SI}.
In this section, we prove the following theorem.

\begin{theorem}
\label{THM:main}
Let ${\PP}$ be a rational lattice-free polytope in $\mathbb{R}^m$ containing $f$
in its interior. If ${\PP}$ has the 2-hyperplane property,
then the ${\PP}$-cut has finite split rank with respect to ${\QQ}$.
\end{theorem}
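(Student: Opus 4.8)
The plan is to prove the statement by induction on the dimension of $\PP_I$ (equivalently, on the combinatorial complexity of $\PP$), using Observation~\ref{OBS:rankQL} to reduce everything to a statement about the height of iterated split closures of the cone $\QQ^\PP$. By Observation~\ref{OBS:rankQL} it suffices to exhibit a finite $t$ such that the rank-$t$ split closure of $\QQ^\PP$ has height at most zero. The base case is when $\PP_I$ is empty or contained in a facet of $\PP$: if $\PP_I=\emptyset$ then $\PP$ is itself lattice-free with no interior integer points and in fact a single well-chosen split reduces the height to zero (the width argument below, with no ``inner'' region, collapses); if $\PP_I$ is contained in a facet of $\PP$, a similar one-step argument applies. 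The main case uses the last remark before Theorem~\ref{THM:GenDim}: since $\PP$ has the 2-hyperplane property and $\PP_I$ is not contained in a facet of $\PP$, there is a split $(\pi,\pi_0)$ with boundary hyperplanes $H^1,H^2$ such that every integer point of $\PP$ lies on $H^1\cup H^2$, with at least one on each.

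First I would set up the geometry of applying this split to the cone $\QQ^\PP$. Write $\PP^1=\PP\cap H^1$ and $\PP^2=\PP\cap H^2$; these are lower-dimensional rational lattice-free polytopes (in the lattices induced on $H^1,H^2$) whose integer points are exactly the integer points of $\PP$ on the respective hyperplane, and each inherits the 2-hyperplane property from $\PP$ (every face of $\PP^i_I$ not contained in a facet of $\PP^i$ is also a face of $\PP_I$ not contained in a facet of $\PP$, hence 2-partitionable). Applying the split $(\pi,\pi_0)$ to $\QQ^\PP$ produces a polyhedron $\K^1=\QQ^\PP(\pi,\pi_0)$ which, by Observation~\ref{OBS:ConvSplit}, is the convex hull of the two ``caps'' $\QQ^{\PP,\le}$ and $\QQ^{\PP,\ge}$. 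The key point is that the region of $\K^1$ of positive height now splits into two pieces sitting over neighborhoods of $\PP^1$ and $\PP^2$ respectively: over the slab strictly between $H^1$ and $H^2$ the height has been cut down to at most the contribution coming from the two lower-dimensional cones $\QQ^{\PP^1}$ and $\QQ^{\PP^2}$ (suitably lifted), while outside the slab the height is already $\le 0$ because there $\QQ^\PP$ has height $\le 0$ (the apex $(f,1)$ lies in the slab and the height-$1$ level is attained only near $f$). Making this precise — identifying $\K^1$ near $H^i$ with a scaled copy of $\QQ^{\PP^i}$ so that the inductive hypothesis can be invoked in the $(m-1)$-dimensional ambient space — is where the bookkeeping lies, and I would use Lemma~\ref{LEM:rankface}(i)--(ii) to transfer split-rank bounds between these spaces.

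The induction then runs as follows. By the inductive hypothesis applied to $\PP^1$ and $\PP^2$, there are finite numbers $t_1,t_2$ of rounds of splits reducing the heights of $\QQ^{\PP^1},\QQ^{\PP^2}$ to zero; pulling these back (splits on the $x$-variables of $H^i$ are restrictions of splits on $\mathbb{R}^m$), after $1+\max(t_1,t_2)$ further split operations on $\QQ^\PP$ the height over every point of $H^1\cup H^2$ is $\le 0$, and by concavity of the height function and the fact that the apex region already has height controlled, one more bounded ``round of splits around $\PP_I$'' (in the sense of Section~\ref{SEC:split}, whose width is bounded below by a positive constant) pushes the height everywhere to $\le 0$. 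Here I expect the main obstacle: the inductive hypothesis only kills the height on $H^1$ and $H^2$ exactly, not in a neighborhood, so one must argue that after finitely many rounds the height over a small slab around $H^i$ — not just over $H^i$ itself — becomes at most the linear interpolation between its values on the two boundary hyperplanes of a suitable outer split; controlling this requires a quantitative ``width of a round of splits'' argument showing the region of positive height shrinks by a definite amount each round and cannot persist indefinitely. I would handle this by choosing, at each stage, the splits around $\PP_I$ so that their widths are uniformly bounded below, so that a geometric-decay argument on the positive-height region terminates in finitely many steps. The remaining details — that $\PP^i$ is rational and lattice-free in its own lattice, that finitely many split operations on a lower-dimensional face lift to finitely many on $\QQ^\PP$ via Lemma~\ref{LEM:rankface}, and that the ``outside the slab'' part never regains positive height — are routine once the slab picture is set up.
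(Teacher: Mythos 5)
Your proposal has a genuine gap, and it sits exactly where the paper has to work hardest. The split $(\pi,\pi_0)$ supplied by the 2-hyperplane property (all integer points of ${\PP}$ on $H^1\cup H^2$) is in general \emph{not} ${\PP}$-englobing: ${\PP}$ can extend beyond both boundary hyperplanes, and the height of ${\QQ}^{\PP}$ is positive over \emph{all} of $\inte({\PP})$, not just over the slab. So your assertion that ``outside the slab the height is already $\le 0$'' is false; after applying the single split there remains a positive-height region over the part of $\inte({\PP})$ outside the slab, which contains no integer points at all and yet cannot be removed by one split. The same misconception undermines your base case: a lattice-free polytope with ${\PP}_I=\emptyset$ (e.g.\ a Type~1 triangle shrunk slightly toward its centroid, which still has lattice width close to $2$) does not fit between the boundary hyperplanes of any split, so ``a single well-chosen split'' does not reduce the height to zero. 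Eliminating these integer-point-free regions is precisely the bulk of the paper's proof: ${\PP}$ is shaved down to ${\PP}_I$ by Chv\'atal splits, each Chv\'atal split is simulated by a finite sequence of \emph{intersecting} splits for suitably enlarged polytopes (Theorem~\ref{LEM:enlarge} and Lemma~\ref{LEM:swiping}), and the argument needs a triple induction (ambient dimension, Chv\'atal index, and the length of the simulating sequence), not an induction on $\dim({\PP}_I)$ alone.

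The second gap is the step you label ``bookkeeping'': identifying ${\QQ}^{\PP}(\pi,\pi_0)$ near $H^i$ with a scaled copy of ${\QQ}^{{\PP}^i}$. No such identification exists --- after one or more split operations the polyhedron is no longer a cone over anything, and the lower-dimensional finiteness statement cannot be invoked verbatim for ${\PP}\cap H^i$ (note also that when $H^i$ supports a facet of ${\PP}$, the slice ${\PP}\cap H^i$ may contain integer points in its relative interior, so it is not lattice-free in its affine hull and your inductive hypothesis does not even apply to it). The paper replaces this identification by an upper-bounding device: the regions $R(Q_x,M,M_0)$ and Lemma~\ref{LEM:QinR}, combined with the reduction-coefficient estimates of Lemmas~\ref{LEM:ITERGEN2} and~\ref{LEM:Dj}, which guarantee only a fixed \emph{fractional} height decrease per pass and therefore must be applied repeatedly (this is where the geometric decay genuinely lives, as in Corollary~\ref{COR:redHeight} and Theorem~\ref{THM:SuffGEN}), with the lower-dimensional induction entering through a cone over an \emph{enlarged} slice, in which the last coordinate plays the role of $z$. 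Without a quantitative mechanism of this kind, your plan of ``kill the height on $H^1$ and $H^2$ by induction, then finish with one round of splits around ${\PP}_I$'' does not close: the inductive step controls heights only on the two hyperplanes of a changing polyhedron, and nothing in the sketch bounds how the positive-height region over the rest of ${\PP}$ shrinks.
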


Before giving the details of the proof, we present the main ideas. 
The first one is presented in Section~\ref{SEC:INTSPLIT}, where we
prove that Theorem~\ref{THM:main} holds when there is a
sequence of ``intersecting splits'' followed by an ``englobing split''.
These notions are defined as follows.

Let ${\K}$ be a polytope in $\mathbb{R}^m$ and let $(\pi, \pi_0)$ be a
split. The part of ${\K}$ contained between or on the boundary
hyperplanes $\pi x = \pi_0$ and $\pi x = \pi_0 + 1$ is denoted by
$\overline{{\K}(\pi, \pi_0)}$. The split $(\pi, \pi_0)$ is a
{\em ${\K}$-intersecting} split if both of its boundary hyperplanes
have a nonempty intersection with ${\K}$.
The split is {\em ${\K}$-englobing} if ${\K} = \overline{{\K}(\pi, \pi_0)}$.
Note that a split can be simultaneously englobing and intersecting, 
as illustrated in Figure~\ref{FIG:InterEnglob}.

\begin{figure}[ht]
\centering \scalebox{0.8}{
\includegraphics{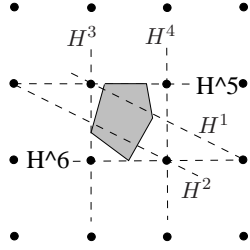}}
\caption{Illustration for intersecting and englobing splits. Polytope ${\K}$ is
shaded. Split with boundary hyperplanes $H^1$ and $H^2$ is ${\K}$-intersecting;
split with boundary hyperplanes $H^3$ and $H^4$ is ${\K}$-englobing;
split with boundary hyperplanes $H^5$ and $H^6$ is simultaneously
${\K}$-intersecting and ${\K}$-englobing.}
\label{FIG:InterEnglob}
\end{figure}

Let $(\pi^1, \pi^1_0), (\pi^2, \pi^2_0), \ldots, (\pi^t, \pi^t_0)$
be a finite sequence of splits. Recall the definition of ${\K}(\pi, \pi_0)$
from Section~\ref{SEC:split}. The polytopes
${\K}^0 := {\K}$, and ${\K}^j := {\K}^{j-1}(\pi^j, \pi_0^j)$ for $j = 1, \ldots,
t$ are the {\em polytopes obtained from the sequence} and ${\K}^t$ is
the {\em polytope at the end of the sequence}. When $S$ denotes the
sequence $(\pi^1, \pi^1_0), (\pi^2, \pi^2_0), \ldots, (\pi^t,
\pi^t_0)$ of splits, we will use the notation ${\K}(S)$ to denote the
polytope ${\K}^t$. We say that this sequence is a {\em sequence of
${\K}$-intersecting splits} if, for all $j = 1, \ldots, t$, we have
that $(\pi^j, \pi^j_0)$ is ${\K}^{j-1}$-intersecting.

Our approach to proving Theorem~\ref{THM:main} is to work with ${\QQ}^{\PP}$,
as introduced at the end of Section~\ref{SEC:split}, instead of with ${\QQ}$
directly. By Observation~\ref{OBS:rankQL}, to show that an ${\PP}$-cut has
finite split rank with respect to $\QQ$, it is sufficient to show that the 
height of ${\QQ}^{\PP}$ can
be reduced to at most 0 in a finite number of split operations.
Lemma~\ref{LEM:Dj} guarantees a reduction of the height of ${\QQ}^{\PP}$ when 
applying repeatedly a sequence of intersecting splits followed by an englobing
split. A key result to proving this
lemma is Lemma~\ref{LEM:ITERGEN2}, showing that, for an intersecting split,
we can essentially guarantee a constant reduction in height for the 
points cut off by the split. A consequence of these results is 
Corollary~\ref{COR:redHeight} that provides a sufficient condition for
proving that the $L$-cut obtained from a bounded rational lattice-free set $L$  
has finite split rank. It is indeed enough to exhibit a finite sequence of 
$L$-intersecting cuts followed by a final englobing cut.

Sections~\ref{SEC:ENLARGE}-\ref{SEC:MAIN}
deal with the unfortunate fact that it is not obvious that a
sequence of intersecting splits followed by an englobing split always exists.
However, since it is possible to reduce ${\PP}$ to ${\PP}_I$ using Chv\'atal 
cuts, the result is proved by replacing each of the Chv\'atal cuts by a finite
collection of intersecting splits for enlarged polytopes, and using
the 2-hyperplane property for proving that a final
englobing split exists. Section~\ref{SEC:ENLARGE}
shows how to enlarge polytopes so that they have desirable properties.
Section~\ref{SEC:CHVATAL} proves a technical lemma about Chv\'atal cuts.
In Section~\ref{SEC:MAIN}, induction
on the dimension is used to prove that the region removed by a
Chv\'atal cut can also be removed by a finite number of intersecting
splits for enlarged polytopes.

We now present details of the proof.

\subsection{Intersecting splits.}
\label{SEC:INTSPLIT}

The Euclidean distance between two points $x^1, x^2$ is denoted by
$d(x^1, x^2)$. For a point $x^1$ and a set $S$, we define $d(x^1, S)
= \inf\{d(x^1,y)\st y \in S\}$. The {\em diameter} of a polytope
${\K}$, denoted by $\diam({\K})$, is the maximum Euclidean distance between
two points in ${\K}$.

Let $Q_x$ be a rational polytope in $\mathbb{R}^m$ of dimension at least 1,
and let $M_0 < M$ be two finite numbers. Define $R(Q_x, M, M_0)$ (see
Figure~\ref{FIG:RMM0}) as the nonconvex region in the $(x, z)$-space
$\mathbb{R}^{m} \times \mathbb{R}$ containing
all points $(\bar x, \bar z)$ such that $\bar x$ is in the affine subspace
$\aff(Q_x)$ spanned by $Q_x$ and
$$
\bar z \le
\left\{
\begin{array}{ll}
M & \mbox{if \ } \bar x \in \relint(Q_x)\\
M_0 - \ \frac{d({\bar x}, Q_x)}{\diam(Q_x)} \ (M - M_0) & \mbox{otherwise.}
\end{array}
\right.
$$
Note that this definition implies that $\bar z \le M_0$ if $\bar x$ is
on the boundary of $Q_x$.

\begin{figure}[ht]
\centering \scalebox{0.8}{
\includegraphics{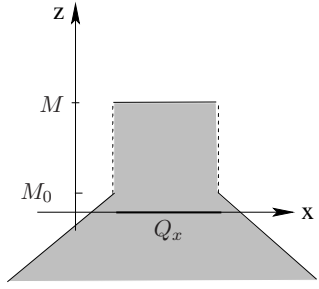}}
\caption{Illustration for the definition of $R(Q_x, M, M_0)$ (shaded area).}
\label{FIG:RMM0}
\end{figure}

The purpose of $R(Q_x, M, M_0)$ is to provide an upper bound on the height
of points outside $Q_x$ with respect to any polyhedron $Q$ of height $M$
having the property that the height with respect to $Q$ of any
$\bar x \not \in \relint(Q_x)$ is at most $M_0$, as shown in the
next lemma.

\begin{lemma}
\label{LEM:QinR}
Let $M_0 < M$ be two finite numbers, let $Q^0 \in \mathbb{R}^{m+1}$
be a rational polyhedron of height $M$ in the $(x, z)$-space, and
let $Q_x \in \mathbb{R}^m$ be a rational polytope
containing $\{x \ | \ \exists z > M_0 \mbox{ \ with \ } (x, z) \in Q^0\}$
in its relative interior. Then $Q^0 \subseteq R(Q_x, M, M_0)$.
\end{lemma}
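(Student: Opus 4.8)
The plan is to verify the inclusion $Q^0 \subseteq R(Q_x, M, M_0)$ pointwise: take an arbitrary $(\bar x, \bar z) \in Q^0$ and show it satisfies the defining inequalities of $R(Q_x, M, M_0)$. First, since the height of $Q^0$ is $M < +\infty$, the height function $h(x) := \max\{z \mid (x,z) \in Q^0\}$ is a concave function on its (nonempty, polyhedral) domain, bounded above by $M$; and by hypothesis $h(x) \le M_0$ whenever $x \notin \relint(Q_x)$. It suffices to prove $\bar z \le h(\bar x)$ entails the $R$-bound, so really we must bound $h(\bar x)$ from above by the piecewise expression in the definition of $R(Q_x,M,M_0)$. (One also needs $\bar x \in \aff(Q_x)$; this is immediate because every $x$ with $(x,z)\in Q^0$ lies in the domain of $h$, and whenever such an $x$ has $z > M_0$ it belongs to $\relint(Q_x) \subseteq \aff(Q_x)$, while points with $h(x)\le M_0$ still need to be handled — see the remark at the end.)

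The substantive case is $\bar x \notin \relint(Q_x)$, where we must show
$$
h(\bar x) \;\le\; M_0 - \frac{d(\bar x, Q_x)}{\diam(Q_x)}\,(M - M_0).
$$
The key step is to pick a witness point $x^* \in \relint(Q_x)$ with $h(x^*)$ close to $M$ — say $h(x^*) \ge M - \varepsilon$, which exists since the height of $Q^0$ is $M$ and, after a small perturbation toward $\relint(Q_x)$ using concavity, such a point can be taken in the relative interior. Now walk along the segment from $x^*$ toward $\bar x$: it must cross the relative boundary of $Q_x$ at some point $y$, and beyond $y$ we have $h \le M_0$ on all of $[y,\bar x]$. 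Write $\bar x = y + \mu (y - x^*)$ appropriately, or rather express $y$ as a convex combination $y = \alpha x^* + (1-\alpha)\bar x$ with $\alpha \in (0,1)$, wait — one should instead use $y$ as a convex combination of $x^*$ and $\bar x$: $y = \lambda \bar x + (1-\lambda) x^*$. Concavity of $h$ at $y$ gives $h(y) \ge \lambda h(\bar x) + (1-\lambda) h(x^*)$, while $h(y) \le M_0$ (as $y$ is on the boundary of $Q_x$) and $h(x^*) \ge M - \varepsilon$. Rearranging,
$$
h(\bar x) \;\le\; \frac{M_0 - (1-\lambda)(M-\varepsilon)}{\lambda} \;=\; M_0 - \frac{1-\lambda}{\lambda}\,(M - M_0) + (\text{term} \to 0 \text{ as } \varepsilon\to 0).
$$
It then remains to show $\tfrac{1-\lambda}{\lambda} \ge \tfrac{d(\bar x, Q_x)}{\diam(Q_x)}$: geometrically $\tfrac{1-\lambda}{\lambda} = \tfrac{d(\bar x, y)}{d(y, x^*)}$, and $d(\bar x, y) \ge d(\bar x, Q_x)$ since $y \in Q_x$, while $d(y, x^*) \le \diam(Q_x)$ since both endpoints lie in $Q_x$. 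Letting $\varepsilon \to 0$ closes the estimate. Note $\diam(Q_x) > 0$ because $\dim(Q_x) \ge 1$.

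The main obstacle I anticipate is the careful handling of the supremum defining the height of $Q^0$: whether it is attained, and whether the witness point $x^*$ can genuinely be taken in $\relint(Q_x)$ rather than merely in $Q_x$. If $Q^0$ is a polytope (or a polyhedron whose height is attained), one can fix $x^*$ with $h(x^*) = M$ and then perturb slightly into $\relint(Q_x)$ using concavity, so that the $\varepsilon$ can actually be eliminated and the bound holds exactly; otherwise the $\varepsilon \to 0$ limiting argument above suffices. A secondary technical point is the degenerate case $\bar x \in \relint(Q_x)$, which is trivial since then the required bound is just $h(\bar x) \le M$, the height of $Q^0$; and the case $\bar x \in \aff(Q_x)$ but $\bar x$ outside the domain of $h$ altogether, where the inequality on $\bar z$ is vacuous because no such $(\bar x,\bar z)$ lies in $Q^0$. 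Assembling these cases gives $Q^0 \subseteq R(Q_x, M, M_0)$.
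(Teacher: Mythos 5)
Your proof is correct and follows essentially the same route as the paper's: take a (near-)maximum-height point in $\relint(Q_x)$, locate where the segment to $\bar x$ meets the relative boundary of $Q_x$, use concavity of the height function to interpolate, and then bound the two distances by $d(\bar x, Q_x)$ and $\diam(Q_x)$. The paper simply takes a point of height exactly $M$ (the maximum is attained since $Q^0$ is a polyhedron with finite height), so your $\varepsilon$-limiting argument is extra but harmless caution, and your side remarks on $\aff(Q_x)$ concern a degeneracy the paper's proof also leaves implicit.
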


\begin{proof}

Let $(\bar x, \bar z) \in Q^0$. As the height of $Q^0$ is $M$, we have
$\bar z \le M$. If $\bar x \in \relint(Q_x)$ then it follows that
$(\bar x, \bar z) \in R(Q_x, M, M_0)$. Otherwise $\bar x \not \in \relint(Q_x)$
and therefore $z \le M_0$ by definition of $Q_x$.
Let $x^M$ be a point in $Q_x$ with height $M$ with respect to $Q^0$.
Let $x^0$ be the intersection of the half-line starting at $x^M$ and going
through
$\bar x$ with the boundary of $Q_x$. Notice that this intersection
is on the segment $x^M{\bar x}$.
As the height of a point with respect to $Q^0$ is a concave function,
the height of $x^M$ is $M$, and the height
of $x^0$ is at most $M_0$ by choice of $Q_x$, we have that
$$
\bar z \le M_0 - \frac{(M-M_0)}{d(x^M, x^0)} \ d({\bar x}, x^0) \ .
$$
The result follows from $d(x^M, x^0) \le \diam(Q_x)$ and
$d({\bar x}, x^0) \ge d({\bar x}, Q_x)$.
\end{proof}

\begin{figure}[ht]
\centering \scalebox{0.8}{
\includegraphics{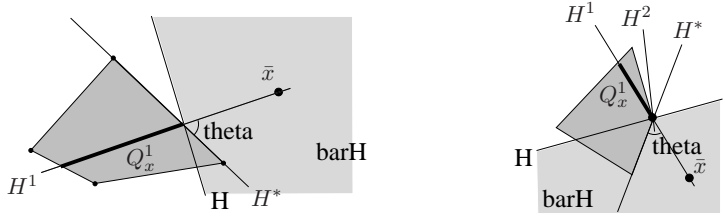}}
\caption{Illustration of the definition of $\theta(Q_x, H^1, \bar H)$ 
for $m = 2$. Polytope $Q_x$ is shaded, polytope $Q^1_x$ is the bold segment,
part of $\bar H$ is lightly shaded. On the left, rotating $H^1$ around
$H \cap H^1$ to get a hyperplane supporting a face of $Q_x$
yields a facet of $Q_x$ and $H^2 = H^*$. On the right, such a rotation 
could give the depicted hyperplane $H^2$, and an additional rotation 
gives $H^*$.}
\label{FIG:UBzGEN2}
\end{figure}

We define a $(Q_x, H^1, \bar H)$ {\em triplet} as follows:
$Q_x$ is a full-dimensional rational polytope in $\mathbb{R}^m$ with $m \ge 2$; 
$H^1 \subseteq \mathbb{R}^m$ is a hyperplane with 
$Q^1_x := H^1 \cap Q_x \neq \emptyset$;
$H \neq H^1$ is a hyperplane in $\mathbb{R}^m$ supporting a nonempty
face of $Q^1_x$ and $\bar H$ is a closed half-space bounded by $H$
not containing $Q^1_x$ (Figure~\ref{FIG:UBzGEN2}). Such a triplet
naturally occurs when using a $Q_x$-intersecting split $(\pi, \pi_0)$
when both $Q_x$ and $Q_x(\pi, \pi_0)$ are full dimensional: $H^1$ is one 
of the two boundary hyperplanes
of the split, $H$ is a hyperplane supporting a facet of $Q_x(\pi, \pi_0)$
that is not a facet of $Q_x$, and $\bar H$ is the half-space bounded
by $H$ that does not contain $Q_x(\pi, \pi_0)$.

Given a $(Q_x, H^1, \bar H)$ triplet,
we claim that there exists a hyperplane $H^*$ separating
$\inte(Q_x)$ from $\bar H \cap H^1$ with $H^* \cap H^1 = H \cap H^1$ and
maximizing the angle between $H^1$ and $H^*$. This maximum value is
denoted by $\theta(Q_x, H^1, \bar H)$.

To see that the claim holds, let $\bar x$ be a point in the relative 
interior of $\bar H \cap H^1$.
As $H \cap H^1$ is a hyperplane of $H^1$ separating $\bar x$ from $Q^1_x$,
we can rotate $H^1$ around $H \cap H^1$ to get a hyperplane $H^2$ supporting
a face of $Q_x$. Then, we can possibly rotate $H^2$ around $H \cap H^1$ 
to increase the angle between $H^2$ and $H^1$ while keeping the resulting
hyperplane $H^*$ supporting a face of $Q_x$. This rotation is stopped 
either if an angle of $\frac{\pi}{2}$ is obtained, or if $H^*$ gains a 
point of $Q_x$ outside of $H \cap H^1$.

\begin{lemma} \label{OBS:dist}
\begin{eqnarray}
d(\bar x, Q_x) \ge d(\bar x, H \cap H^1) \cdot \sin \theta(Q_x, H^1, \bar H) \ .
\end{eqnarray}
\end{lemma}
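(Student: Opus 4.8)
The plan is to reduce the inequality to a two‑dimensional (in fact planar) trigonometric fact about the separating hyperplane $H^*$ furnished by the claim preceding the lemma. First I would record the geometric data: $\bar x$ lies in the relative interior of $\bar H \cap H^1$, and $H^*$ is a hyperplane with $H^* \cap H^1 = H \cap H^1$ that separates $\inte(Q_x)$ from $\bar H \cap H^1$, so in particular $H^*$ weakly separates $\bar x$ from $Q_x$; moreover the angle between $H^1$ and $H^*$ equals $\theta := \theta(Q_x, H^1, \bar H)$. Since $H^*$ separates $\bar x$ from $Q_x$, every point of $Q_x$ lies on the far side of $H^*$ from $\bar x$, hence $d(\bar x, Q_x) \ge d(\bar x, H^*)$. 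So it suffices to prove
\begin{eqnarray*}
d(\bar x, H^*) \ge d(\bar x, H \cap H^1)\cdot \sin\theta .
\end{eqnarray*}

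Next I would compute $d(\bar x, H^*)$ exactly. Because $\bar x \in H^1$, the relevant configuration lives inside the hyperplane $H^1$: let $\ell := H \cap H^1 = H^* \cap H^1$, which is an affine subspace of codimension one in $H^1$. Project $\bar x$ orthogonally (within $H^1$) onto $\ell$ to get a point $y$; then $d(\bar x, \ell) = d(\bar x, H\cap H^1)$, the quantity on the right‑hand side. Now work in the $2$‑plane $\Pi$ spanned by the segment $\bar x y$ and the unit normal to $H^1$ through $y$ — a plane orthogonal to $\ell$. Inside $\Pi$, $H^1$ appears as a line through $y$, $H^*$ appears as a line through $y$ (since $\ell \subseteq H^*$ and $\ell$ meets $\Pi$ only at $y$), and the angle between these two lines is exactly $\theta$ (the dihedral angle between $H^1$ and $H^*$ is measured in any plane orthogonal to their common subspace $\ell$). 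The point $\bar x$ sits on the line representing $H^1$ at distance $d(\bar x,\ell)$ from $y$. A right‑triangle computation in $\Pi$ then gives $d(\bar x, H^*) = d(\bar x,\ell)\cdot \sin\theta$, which combined with $d(\bar x, Q_x)\ge d(\bar x, H^*)$ yields the lemma; note the identity form is even stronger than needed, and the inequality is all we claim.

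The one point that needs genuine care — and which I expect to be the main obstacle — is the orientation/sign bookkeeping ensuring that $H^*$ really does put $\bar x$ and $Q_x$ on opposite (closed) sides, so that $d(\bar x, Q_x) \ge d(\bar x, H^*)$ is legitimate; this is exactly the content of ``$H^*$ separates $\inte(Q_x)$ from $\bar H \cap H^1$'' together with $\bar x \in \relint(\bar H\cap H^1)$, so it should follow directly from the construction of $H^*$ in the claim, but it must be stated. A secondary subtlety is the degenerate case $\theta = \pi/2$ (then $H^*\perp H^1$ and the bound reads $d(\bar x,Q_x)\ge d(\bar x, H\cap H^1)$, which is immediate since $\bar x \in H^1$ and $H\cap H^1 \subseteq H^1$ is the nearest part of the relevant boundary), and the case where $\bar x$ happens to lie on $\ell$, where both sides are zero. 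Once separation is in hand, the rest is the elementary planar trigonometry sketched above.
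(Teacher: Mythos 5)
Your proposal is correct and follows essentially the same route as the paper: the chain $d(\bar x, Q_x) \ge d(\bar x, H^*) = d(\bar x, H^* \cap H^1)\cdot \sin \theta(Q_x, H^1, \bar H) = d(\bar x, H \cap H^1)\cdot \sin \theta(Q_x, H^1, \bar H)$, with the first inequality from the separating hyperplane $H^*$ and the middle equality being the planar trigonometric identity (which the paper only justifies by a figure, while you spell it out in the plane orthogonal to $H \cap H^1$). Your extra attention to the separation bookkeeping and the degenerate cases is fine and does not change the argument.
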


\begin{proof}
The observation follows from
\begin{eqnarray}
\label{EQ:Hstar}
d(\bar x, Q_x) \ge d(\bar x, H^*) =
d(\bar x, H^* \cap H^1) \cdot \sin \theta(Q_x, H^1, \bar H) =
d(\bar x, H \cap H^1) \cdot \sin \theta(Q_x, H^1, \bar H) \ .
\end{eqnarray}
Indeed, the first inequality comes from the fact that $H^*$ separates $\bar x$
from $Q_x$, the first equality is pictured in Figure~\ref{FIG:Hstar}, and the
last equality follows from $H^* \cap H^1 = H \cap H^1$.
\begin{figure}[ht]
\centering \scalebox{0.8}{
\includegraphics{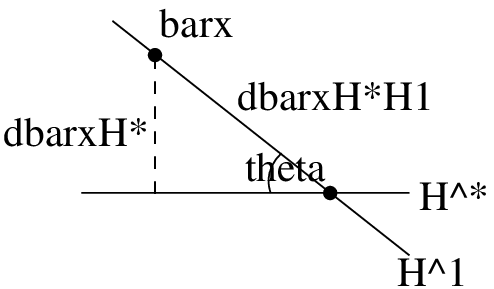}}
\caption{Illustration for the proof of
Lemma~\ref{OBS:dist}. 
}
\label{FIG:Hstar}
\end{figure}
\end{proof}

\begin{lemma}
\label{LEM:UBzGEN}
Consider a $(Q_x, H^1, \bar H)$ triplet.
Let $M^*_0 < M^*$ be two finite numbers and let $Q \subseteq R(Q_x, M^*, M^*_0)$ be a rational
polyhedron of height $M \le M^*$. Let $\bar x \in H^1 \cap \bar H$. The
height $\bar z$ of $\bar x$ with respect to $Q$ satisfies
$$
\bar z \le M^*_0 - \sin \theta(Q_x, H^1, \bar H)
\cdot \frac{(M^*-M^*_0)}{\diam(Q_x)} \cdot d(\bar x, H \cap H^1) \ .
$$
\end{lemma}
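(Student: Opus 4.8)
The plan is to combine the containment $Q \subseteq R(Q_x, M^*, M^*_0)$ with the geometric distance estimate of Lemma~\ref{OBS:dist}. Fix $\bar x \in H^1 \cap \bar H$ and let $\bar z$ be its height with respect to $Q$. Since $Q \subseteq R(Q_x, M^*, M^*_0)$, the point $(\bar x, \bar z)$ lies in $R(Q_x, M^*, M^*_0)$, so $\bar z$ is bounded by the piecewise expression in the definition of that region. I would first argue that the relevant case is the ``otherwise'' branch, i.e.\ that $\bar x \notin \relint(Q_x)$: indeed $\bar x \in H^1 \cap \bar H$, and $\bar H$ is a closed half-space bounded by $H$ not containing $Q^1_x = H^1 \cap Q_x$, so $\bar x$ cannot be in the relative interior of $Q_x$ (it is on the far side of $H$ from $Q_x^1$, hence at best on the boundary of $Q_x$). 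Therefore
$$
\bar z \le M^*_0 - \frac{d(\bar x, Q_x)}{\diam(Q_x)} \, (M^* - M^*_0).
$$

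Next I would substitute the lower bound on $d(\bar x, Q_x)$ provided by Lemma~\ref{OBS:dist}, namely $d(\bar x, Q_x) \ge d(\bar x, H \cap H^1) \cdot \sin \theta(Q_x, H^1, \bar H)$. Since $M^* - M^*_0 > 0$ and $\diam(Q_x) > 0$, replacing $d(\bar x, Q_x)$ by the smaller quantity on the right-hand side only increases the bound, giving exactly
$$
\bar z \le M^*_0 - \sin \theta(Q_x, H^1, \bar H) \cdot \frac{(M^* - M^*_0)}{\diam(Q_x)} \cdot d(\bar x, H \cap H^1),
$$
which is the claimed inequality. The hypothesis $M \le M^*$ enters only to guarantee that the containment $Q \subseteq R(Q_x, M^*, M^*_0)$ is meaningful — $R$ is defined with the larger value $M^*$ as its cap — and is not otherwise used in the chain of inequalities; I would note this explicitly.

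The only genuinely delicate point is the case distinction at the start: one must be sure that $\bar x \in H^1 \cap \bar H$ forces $\bar x \notin \relint(Q_x)$, because otherwise the bound $\bar z \le M^*$ from the ``$\relint$'' branch is too weak and the stated inequality could fail. This follows from the definition of the triplet, since $H$ supports a face of $Q^1_x$ with $Q^1_x$ on one side and $\bar H$ the closed half-space on the other side; thus $\bar H$ meets $Q_x$ only in $H$, and any point of $H^1 \cap \bar H$ lies in $H$, hence on the relative boundary of $Q_x$ rather than in its relative interior. Everything else is a direct substitution, so I expect no substantial obstacle beyond carefully invoking the two preceding lemmas.
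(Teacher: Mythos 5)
Your chain of inequalities is exactly the paper's proof: since $Q \subseteq R(Q_x, M^*, M^*_0)$ and $\bar x \notin \relint(Q_x)$, bound $\bar z$ by the ``otherwise'' branch of the definition of $R(Q_x, M^*, M^*_0)$, then replace $d(\bar x, Q_x)$ by the lower bound of Lemma~\ref{OBS:dist}. The paper only differs cosmetically, treating the case where $\bar x$ lies on the boundary of $Q_x$ separately (there $d(\bar x, H \cap H^1) = 0$); your uniform substitution covers that case as well.

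However, your justification of the one delicate point, namely that $\bar x \in H^1 \cap \bar H$ forces $\bar x \notin \relint(Q_x)$, is incorrect as written. It is not true that ``$\bar H$ meets $Q_x$ only in $H$'': the hyperplane $H$ supports a face of $Q^1_x = H^1 \cap Q_x$, not a face of $Q_x$, and in the intended application ($H = H^F$ supporting a facet of $Q_x(\pi,\pi_0)$ that is not a facet of $Q_x$) it passes through the interior of $Q_x$, so $\bar H \cap Q_x$ is typically full-dimensional. Likewise ``any point of $H^1 \cap \bar H$ lies in $H$'' is false; if it were true, $d(\bar x, H \cap H^1)$ would always vanish and the lemma would be trivial. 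The conclusion you need is nevertheless true, with the following argument: if $\bar x \in H^1 \cap \bar H$ were in $\relint(Q_x) = \inte(Q_x)$ (recall $Q_x$ is full-dimensional), then a small ball around $\bar x$ lies in $Q_x$, hence $\bar x \in \relint(Q^1_x)$; but $Q^1_x$ lies in the closed half-space complementary to $\bar H$, so $Q^1_x \cap \bar H \subseteq H$, and $H$ supports a \emph{proper} face of $Q^1_x$ (proper because $\bar H$, and hence $H$, does not contain $Q^1_x$), so $H \cap \relint(Q^1_x) = \emptyset$, a contradiction. With this repair your proof is complete; the paper itself asserts $\bar x \notin \relint(Q_x)$ without comment.
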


\begin{proof}
As $Q \subseteq R(Q_x, M^*, M^*_0)$ and $\bar x \not \in \relint(Q_x)$, we have that
\begin{eqnarray}
\label{EQ:UBzGEN1}
\bar z \le M^*_0 - \ \frac{d({\bar x}, Q_x)}{\diam(Q_x)} \ (M^* - M^*_0)\ .
\end{eqnarray}
If $\bar x$ is on the boundary of $Q_x$, then $\bar z \le M^*_0$,
 and the result holds since $d(\bar x, H \cap H^1) = 0$. Otherwise,
the result follows from using Lemma~\ref{OBS:dist}
to replace the term $d(\bar x, Q_x)$ in (\ref{EQ:UBzGEN1})  by
$d(\bar x, H \cap H^1) \cdot \sin \theta(Q_x, H^1, \bar H)$.
\end{proof}

\begin{figure}[ht]
\centering \scalebox{0.8}{
\includegraphics{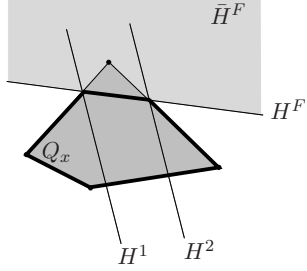}}
\caption{Illustration of the definition of
$\delta(Q_x, (\pi, \pi_0), {\bar H}^F)$ for $m = 2$. Polytope $Q_x$ is
shaded, part of $\bar H^F$ is lightly shaded, and $Q_x(\pi, \pi_0)$ is
depicted in bold line.}
\label{FIG:ITERGEN}
\end{figure}

Let $Q_x$ be a full-dimensional rational polytope in $\mathbb{R}^m$
with $m \ge 2$ and let $(\pi, \pi_0)$ be a $Q_x$-intersecting split
with boundary hyperplanes $H^1$ and $H^2$ (see Figure~\ref{FIG:ITERGEN}). Assume first that
$Q_x(\pi, \pi_0)$ is full-dimensional and strictly contained in $Q_x$,
and that the width $w$ of a round of splits around $Q_x$ satisfies
$w < \diam(Q_x)$. Recall that the width of a round of splits is defined at the
end of Section~\ref{SEC:split}. Let $H^F$ be a hyperplane supporting a facet
$F$ of $Q_x(\pi, \pi_0)$ that is not a facet of $Q_x$
and let ${\bar H}^F$ be the closed half-space not containing $Q_x(\pi, \pi_0)$
bounded by $H^F$. As mentioned earlier, we have that, for $i=1, 2$,
$(Q_x, H^i, {\bar H}^F)$ is a triplet. Let
\begin{eqnarray*}
\delta(Q_x, (\pi, \pi_0), {\bar H}^F) =
\frac{w}{\diam(Q_x)} \cdot \min \{\sin \theta(Q_x, H^1, {\bar H}^F), \
\sin \theta(Q_x, H^2, {\bar H}^F)\} \ .
\end{eqnarray*}
Define the {\em reduction coefficient} for $(Q_x, (\pi, \pi_0))$, denoted by
$\delta(Q_x, (\pi, \pi_0))$, as the minimum of
$\delta(Q_x, (\pi, \pi_0), {\bar H}^F)$ taken over all hyperplanes $H^F$
supporting a facet $F$ of $Q_x(\pi, \pi_0)$ that is not a facet of $Q_x$.
As $Q_x(\pi, \pi_0)$ has a finite number of facets, this minimum is
well-defined and its value is positive and at most one.
Assume now that $Q_x(\pi, \pi_0)$ is not full-dimensional or that
$Q_x = Q_x(\pi, \pi_0)$ or that $w \ge \diam(Q_x)$.
The reduction coefficient for $(Q_x, (\pi, \pi_0))$ is then defined as
the value 1.
Note that the reduction coefficient depends only on $Q_x$ and $(\pi, \pi_0)$
and always has a positive value smaller than or equal to 1.

Lemma~\ref{LEM:UBzGEN} can be used to prove a bound on the height of
some points $\bar x$ after applying an intersecting split. 
Given a set $S \subseteq \mathbb{R}^n$ we denote its closure by $\cl(S)$.
(We mean the topological closure here, not to be confused with the 
split closure.)

\begin{lemma}
\label{LEM:ITERGEN2}
Let $Q_x$ be a full-dimensional rational
polytope in $\mathbb{R}^m$ with $m \ge 2$. Let $(\pi, \pi_0)$ be a
$Q_x$-intersecting split and let $S$ be the sequence of a
round of splits around $Q_x$ followed by $(\pi,\pi_0)$.

Then, for any two finite numbers $M^*_0 < M^*$ and for any
rational polyhedron $Q \subseteq R(Q_x, M^*, M^*_0)$,
the height with respect to $Q(S)$ of any point in
$\cl(Q_x\setminus Q_x(\pi,\pi_0))$ is at most
$\max\left\{M_0^*, \ M - \ \delta(Q_x, (\pi, \pi_0)) \cdot
(M^*-M^*_0)\right\}$, where $M$ is the height of $Q$.
\end{lemma}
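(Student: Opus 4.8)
The plan is to track the height of a point $\bar x \in \cl(Q_x \setminus Q_x(\pi,\pi_0))$ through the two stages of $S$: first the round of splits around $Q_x$, then the single split $(\pi,\pi_0)$. Throughout, set $\delta := \delta(Q_x,(\pi,\pi_0))$ and let $w$ be the width of the round of splits around $Q_x$. If $\delta = 1$ (the degenerate cases: $Q_x(\pi,\pi_0)$ not full-dimensional, or $Q_x = Q_x(\pi,\pi_0)$, or $w \geq \diam(Q_x)$), the bound $\max\{M_0^*, M - (M^*-M^*_0)\}$ is at most $M_0^*$ whenever $M \le M^*$, and in fact in these cases $\cl(Q_x \setminus Q_x(\pi,\pi_0))$ is either empty or contained in the boundary hyperplanes, so a direct argument using Observation~\ref{OBS:ConvSplit} handles it; so assume from now on that $Q_x(\pi,\pi_0)$ is full-dimensional, strictly contained in $Q_x$, and $w < \diam(Q_x)$.

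First I would handle the round of splits around $Q_x$. Since $Q \subseteq R(Q_x, M^*, M^*_0)$, by definition of $R$ every $x$ with height $> M^*_0$ with respect to $Q$ lies in $\relint(Q_x)$, hence in $\relint(Q_x(F))$ for each facet $F$ being used; the split $(\pi(F),\pi_0(F))$ has one boundary hyperplane supporting $F$ (or beyond it on the empty side) and the other at distance $\geq w$ on the far side, with $Q_x$ between them. The point is that the round of splits does not increase height anywhere (heights only drop under a split, by Observation~\ref{OBS:ConvSplit}), and after it the polyhedron $Q' := Q(\text{round})$ still has height $\leq M$ and still satisfies $Q' \subseteq R(Q_x, M^*, M^*_0)$ — but now we have gained control: any point $\bar x$ lying on the boundary of $Q_x$ or outside has height $\leq M^*_0$ with respect to $Q'$, and moreover, crucially, any point $\bar x \in \cl(Q_x\setminus Q_x(\pi,\pi_0))$ that is separated from $Q_x(\pi,\pi_0)$ by some facet hyperplane $H^F$ of $Q_x(\pi,\pi_0)$ can, via the round of splits, already be pushed down. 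Actually the cleaner route: the round of splits around $Q_x$ guarantees that any $\bar x$ with height $> M^*_0$ w.r.t. $Q'$ is still in $\relint(Q_x)$, and additionally the width-$w$ margin will feed the quantitative estimate in the second stage.

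Next, for the split $(\pi,\pi_0)$ itself, take any $\bar x \in \cl(Q_x\setminus Q_x(\pi,\pi_0))$. Then $\bar x$ lies in the closed region between or beyond the boundary hyperplanes of $(\pi,\pi_0)$ but outside $Q_x(\pi,\pi_0)$, so there is a facet $F$ of $Q_x(\pi,\pi_0)$ (not a facet of $Q_x$), with supporting hyperplane $H^F$ and outer half-space $\bar H^F$, such that $\bar x \in \bar H^F$. By Observation~\ref{OBS:ConvSplit}, the height of $\bar x$ w.r.t. $Q'(\pi,\pi_0) = Q(S)$ is a convex combination of heights of two points $x^1 \in Q' \cap H^1$ and $x^2 \in Q' \cap H^2$ (the boundary hyperplanes of $(\pi,\pi_0)$), with $\bar x$ on the segment $x^1x^2$. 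Each $x^i$ lies in $H^i \cap \bar H^F$ (this needs the geometry: $\bar H^F$ is ``englobing'' along the split direction, i.e. the segment stays on the $\bar H^F$ side — this is the step I'd want to argue carefully, that $H^1 \cap \bar H^F$ and $H^2 \cap \bar H^F$ contain $x^1, x^2$ respectively because $\bar x \in \bar H^F$ and $H^F$ separates $Q_x(\pi,\pi_0)$, while $x^1,x^2$ are the intersections of the segment with the boundary hyperplanes). Now apply Lemma~\ref{LEM:UBzGEN} to the triplet $(Q_x, H^i, \bar H^F)$ and the polyhedron $Q'$ (which has height $M$ and lies in $R(Q_x, M^*, M^*_0)$): the height of $x^i$ w.r.t. $Q'$ is at most
\[
M^*_0 - \sin\theta(Q_x, H^i, \bar H^F)\cdot\frac{M^*-M^*_0}{\diam(Q_x)}\cdot d(x^i, H^F\cap H^i).
\]
Taking the convex combination and using that $d(x^i, H^F \cap H^i) \geq w$ for at least... — here is where the round of splits pays off: since $x^i \in Q' \cap H^i$ has height $> M^*_0$ only if $x^i \in \relint(Q_x)$, and $Q_x$ extends a distance $\geq w$ beyond $H^F$ in directions within $H^i$ is not quite it; rather, one of the two endpoints must sit far enough. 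I would bound: either the height of $x^i$ is already $\leq M^*_0$ (done), or $x^i \in \relint(Q_x)$ and then the relevant distance is controlled. The combination gives height of $\bar x$ at most $\max\{M^*_0,\ M - \delta(Q_x,(\pi,\pi_0),\bar H^F)(M^*-M^*_0)\}$ where the $w/\diam(Q_x)$ and the $\min$ over $H^1, H^2$ of $\sin\theta$ assemble exactly into $\delta(\cdot,\cdot,\bar H^F)$; minimizing over all facets $F$ of $Q_x(\pi,\pi_0)$ yields the claimed bound with $\delta(Q_x,(\pi,\pi_0))$.

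The main obstacle I anticipate is the geometric bookkeeping in the second stage: verifying that the two segment endpoints $x^1, x^2$ produced by Observation~\ref{OBS:ConvSplit} genuinely lie in $H^1\cap\bar H^F$ and $H^2\cap\bar H^F$ (so that Lemma~\ref{LEM:UBzGEN} applies to each), and that the distance term $d(x^i, H^F\cap H^i)$ can be replaced by $w$ in the final estimate — i.e. quantifying how the round of splits around $Q_x$ guarantees a uniform margin of at least $w$ on the side being cut. Getting the min/max and the coefficient $\delta$ to line up exactly, rather than up to a constant, is the delicate part; everything else is an application of concavity of height (Observation~\ref{OBS:ConvSplit}) and Lemmas~\ref{LEM:QinR} and~\ref{LEM:UBzGEN}.
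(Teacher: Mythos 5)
There is a genuine gap, and it sits exactly where you flagged it. Your key step, that both endpoints $x^1\in H^1$ and $x^2\in H^2$ of the segment through $\bar x$ lie in $\bar H^F$ so that Lemma~\ref{LEM:UBzGEN} applies to each, is false in the interesting case and cannot be repaired: if both endpoints were in $\bar H^F\cap H^i$, Lemma~\ref{LEM:UBzGEN} would bound both of their heights by $M_0^*$ and hence give $\bar z\le M_0^*$, a conclusion strictly stronger than the lemma claims (the bound $M-\delta\cdot(M^*-M_0^*)$ can exceed $M_0^*$, which is precisely why Lemma~\ref{LEM:Dj} and Corollary~\ref{COR:redHeight} need to iterate the sequence $S$ many times). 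The correct picture is that, when $\bar z>M_0^*$, exactly one endpoint, say $x^1$, lies in $\bar H^F$ --- because every point of $\bar H^F\cap H^i$ is not in $\inte(Q_x)$ and so has height at most $M_0^*$ --- while the other endpoint $x^2$ lies in $Q_x\setminus\bar H^F$, where Lemma~\ref{LEM:UBzGEN} gives no information and the height can be as large as $M$. Consequently your assembly of the convex combination never lets the height $M$ of $Q$ enter the estimate, and the stated bound cannot come out of it.

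What is missing is the actual quantitative payoff of the round of splits, namely a bound on $z^2$. In the paper this is the inequality $z^2\le M_0^*+\frac{d_2}{w}(M-M_0^*)$ when $d_2=d(x^2,H^F\cap H^2)<w$: one finds a facet $F^1$ of $Q_x$ with $d(x^2,F^1)\le d_2$, applies Observation~\ref{OBS:ConvSplit}~(ii) a second time to the split $(\pi(F^1),\pi_0(F^1))$ of the round, and intersects the resulting segment with the hyperplane over $\aff(F^1)$ lifted orthogonally to the $x$-space, whose points have height at most $M_0^*$; when $d_2\ge w$ one simply uses $z^2\le M$. Combining this with $z^1\le M_0^*-\frac{\delta}{w}(M^*-M_0^*)\,d_1$ from Lemma~\ref{LEM:UBzGEN}, through the weights $\frac{d_2}{d_1+d_2}$ and $\frac{d_1}{d_1+d_2}$ obtained by similar triangles, is what produces $M-\delta\cdot(M^*-M_0^*)$. (One also needs the device of taking $\bar x$ of maximum height in $\cl(Q_x\setminus Q_x(\pi,\pi_0))$, which together with $z^2>z^1$ forces $\bar x\in H^F$ and $d_1>0$.) Your treatment of the degenerate cases is also slightly off --- when $Q_x(\pi,\pi_0)$ is lower-dimensional or $w\ge\diam(Q_x)$, the set $\cl(Q_x\setminus Q_x(\pi,\pi_0))$ is in general neither empty nor contained in the boundary hyperplanes --- but that is easily fixed; the missing bound on the inner endpoint is the essential gap.
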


\begin{proof}
Let $w$ be the width of the round of splits around $Q_x$.
Observe that if $w \ge \diam(Q_x)$ then all the splits used during
the round of splits around $Q_x$ are $Q_x$-englobing. It follows
that the height of $Q(S)$ is at most $M_0^*$ and the result holds
(recall that if $Q(S) = \emptyset$ then its height is $-\infty$).
Similarly, if $Q_x(\pi, \pi_0)$ is not
full-dimensional, then $(\pi, \pi_0)$ is $Q_x$-englobing and the result
holds. Finally, if $Q_x(\pi, \pi_0) = Q_x$ then the result trivially
holds.

\begin{figure}[ht]
\centering \scalebox{0.8}{
\includegraphics{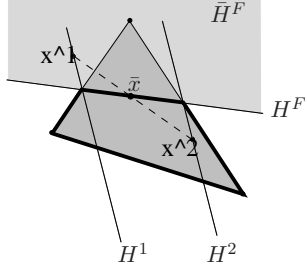}}
\caption{Illustration of the proof of Lemma~\ref{LEM:ITERGEN2}. Polytope 
$Q_x \subseteq \mathbb{R}^2$ is shaded, part of $\bar H^F$ is lightly shaded,
and $Q_x(\pi, \pi_0)$ is depicted in bold line. 
Point $\bar x$ is a convex combination of $x^1$ and $x^2$.}
\label{FIG:combconv}
\end{figure}

We can thus assume that $w < \diam(Q_x)$ and that $Q_x(\pi, \pi_0)$ is
full-dimensional and strictly contained in $Q_x$.
Let $H^1$ and $H^2$ be the boundary hyperplanes of
$(\pi, \pi_0)$. Let $\bar x \in \cl(Q_x\setminus Q_x(\pi,\pi_0))$
with maximum height $\bar z$ with respect to $Q(S)$.
If $\bar x$ is in a facet $F$ of $Q_x(\pi, \pi_0)$, let $H^F$ be a hyperplane
supporting $F$. Otherwise, let $H^F$ be a hyperplane
supporting a facet of $Q_x(\pi, \pi_0)$ separating $\bar x$ from
$Q_x(\pi, \pi_0)$. Let ${\bar H}^F$ be the half-space bounded by
$H^F$ not containing $Q_x(\pi, \pi_0)$.
We may assume $\bar z > M_0^*$ as otherwise the result trivially holds.
Thus $\bar x$ is in $\inte(Q_x)$ and strictly
between $H^1$ and $H^2$ (Figure~\ref{FIG:combconv}). Therefore, as shown in
Observation~\ref{OBS:ConvSplit} (ii), $(\bar x, \bar z)$ is a convex
combination of a point $(x^1, z^1) \in Q(S)$ with $x^1 \in H^1$ and
a point $(x^2, z^2) \in Q(S)$ with $x^2 \in H^2$, namely

\begin{eqnarray}
\label{EQ:combconv}
(\bar x, \bar z) = \frac{d(x^2, \bar x)}{d(x^1, x^2)} \cdot (x^1, z^1) +
\frac{d(x^1, \bar x)}{d(x^1, x^2)} \cdot (x^2, z^2) \ .
\end{eqnarray}

As all points in
$\bar H^F \cap H^i$ for $i = 1, 2$ are not in $\inte(Q_x)$ and thus
have height at most $M_0^*$, one of the points $x^i$
is in $Q_x \setminus {\bar H}^F$. Moreover, as $\bar x \in {\bar
H}^F$, the other one is in $\bar{H}^F$. Without loss of generality,
we assume that $x^2 \in Q_x \setminus {\bar H}^F$. For $i =
1, 2$, let $p^i$ be the closest point to $x^i$ in  $H^F \cap H^i$ and let
$d_i = d(x^i, p^i)$. We thus have $d_2 > 0$. As $z^1 \le M^*_0 < \bar z$,
we have $z^2 > z^1$ and points on the segment joining $(x^1, z^1)$ 
to $(x^2, z^2)$ have
increasing height as they get closer to $(x^2, z^2)$. We thus have that
$\bar x$ is on $H^F$, implying that $d_1 > 0$. Using (\ref{EQ:combconv})
and the similarity of triangles $x^1p^1\bar x$ and $x^2p^2\bar x$, implying
$\frac{d(x^i, \bar x)}{d(x^1, x^2)} = \frac{d^i}{d_1 + d_2}$, we get

\begin{eqnarray}
\label{EQ:combconvgen}
(\bar x, \bar z) = \frac{d_2}{d_1 + d_2} \cdot (x^1, z^1) +
\frac{d_1}{d_1 + d_2} \cdot (x^2, z^2) \ .
\end{eqnarray}

To simplify notation in the remainder of the proof, we use $\delta$
instead of $\delta(Q_x, (\pi, \pi_0))$.
By Lemma~\ref{LEM:UBzGEN} and the definition of $\delta$,
the height $z$ with respect to $Q$ of any
point $x \in {\bar H}^F \cap H^i$ is at most $M_0^* -\frac{\delta}{w} \cdot
(M^*-M^*_0) \cdot d(x, H^F \cap H^i)$.

It follows that we have
\begin{eqnarray}
\label{EQ:UB1}
z^1 \le M_0^* - \frac{\delta}{w} \cdot (M^*-M^*_0) \cdot d_1\ .
\end{eqnarray}

Using (\ref{EQ:UB1}) in (\ref{EQ:combconvgen}), we get
\begin{eqnarray}
\bar z &\le& \frac{d_2}{d_1 + d_2} \cdot \left(M_0^* - \frac{\delta}{w} \cdot
(M^*-M^*_0) \cdot d_1 \right) + \frac{d_1}{d_1 + d_2} \cdot z^2 \ .
\label{EQ:ubz}
\end{eqnarray}

Assume first that $d_2 \geq w$. As $z^2 \le M = M_0^* + (M-M_0^*)$,
(\ref{EQ:ubz}) becomes
\begin{eqnarray*}
\bar z &\le&  M_0^* +  \frac{d_1}{d_1 + d_2} \left((M-M_0^*) -
\delta \cdot \frac{d_2}{w} \cdot (M^*-M^*_0)  \right) \ .
\end{eqnarray*}
As we have that $ \bar z > M_0^*$, the expression in brackets above
is positive. Since the fraction in front of it is at most 1, we get
\begin{eqnarray*}
\bar z &\le& M_0^* + (M - M_0^*) - \delta \cdot \frac{d_2}{w}
(M^*-M^*_0) \le M - \delta \cdot (M^*-M^*_0) \ ,
\end{eqnarray*}
which proves the result.

Assume now that $d_2 < w$. We claim that
\begin{eqnarray}
\label{EQ:UB2}
z^2 \le M_0^* + \frac{d_2}{w} \cdot (M - M_0^*) \ .
\end{eqnarray}
To show this, we first claim that there exists a facet $F^1$ of $Q_x$ with
$d(x^2, F^1) \le d^2$. Recall that $p^2$ is the point in $H^F \cap H^2$ 
closest to $x^2$.
If $p^2$ is on the boundary of $Q_x$, then any facet $F^1$ of $Q_x$
containing $p^2$ proves the claim. Otherwise, as $H^F \cap H^2$ supports
$Q_x \cap H^2$, the segment $x^2p^2$ intersects
the boundary of $Q_x \cap H^2$ in a point that is on a facet
$F^1$ of $Q_x$, proving the claim.
Let $H(F^1)$ and $H'(F^1)$ be the boundary hyperplanes of the split
$(\pi(F^1), \pi_0(F^1))$ such that $H(F^1) \cap \inte(Q_x) =
\emptyset$ (Figure~\ref{FIG:combconvF1}).

\begin{figure}[ht]
\centering \scalebox{0.8}{
\includegraphics{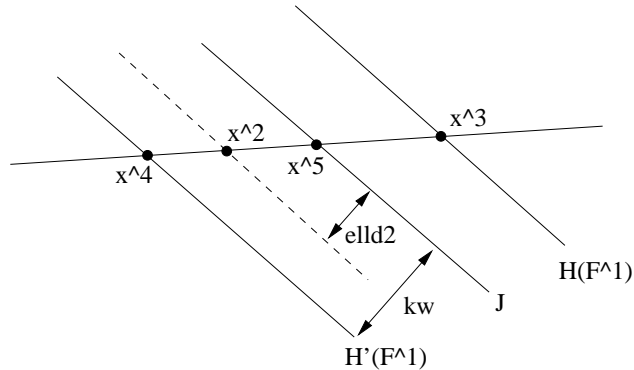}}
\caption{Illustration of the proof of Lemma~\ref{LEM:ITERGEN2}.}
\label{FIG:combconvF1}
\end{figure}

By definition of the width of a round of splits around
$Q_x$, the distance $k$ between $F^1$ and $H'(F^1)$ is at least $w$.
By Observation~\ref{OBS:ConvSplit} (ii), the point $(x^2, z^2)$ is a
convex combination of a point $(x^3, z^3) \in Q \cap H(F^1)$
and a point $(x^4, z^4) \in  Q \cap H'(F^1)$. The hyperplane $J$ 
in $\mathbb{R}^m$ supporting $F^1$ can be lifted in the $(x, z)$-space to 
the hyperplane $J' = \{(x, z) \in \mathbb{R}^{m+1} \ | \ x \in J\}$
orthogonal to the $x$-space.
Similarly, the hyperplane $H'(F^1) \in \mathbb{R}^m$ can be lifted
to a hyperplane $H''(F^1)$ in $\mathbb{R}^{m+1}$ orthogonal to the $x$-space.
The segment joining $(x^3, z^3)$ to $(x^4, z^4)$
intersects $J'$ in a point $(x^5, z^5) \in Q$ and thus

\begin{eqnarray}
\label{EQ:combconvF1}
(x^2, z^2) = \frac{d(x^2, x^5)}{d(x^4, x^5)} \cdot (x^4, z^4) +
\frac{d(x^2, x^4)}{d(x^4, x^5)} \cdot (x^5, z^5) \ .
\end{eqnarray}

Let $q'$ (resp. $q''$) be the closest point to $(x^2, z^2)$ in $J'$ 
(resp.  $H''(F^1)$) and let $\ell = d((x^2, z^2), q')$. Note that $\ell$
is at most the distance between $x^2$ and $F^1$ and recall that we 
have shown above that this is at most $d^2$.
Using similar triangles $(x^2, z^2)q'(z^4, z^4)$ and $(x^2, z^2)q''(z^5, z^5)$, 
we have
\begin{eqnarray}
\label{EQ:kell}
\frac{d(x^2, x^5)}{d(x^4, x^5)} = \frac{\ell}{k}
\hspace{1cm} \mbox{\ and \ } \hspace{1cm}
\frac{d(x^2, x^4)}{d(x^4, x^5)} = \frac{k - \ell}{k} \ .
\end{eqnarray}

Note that, trivially, $z^4 \le M$ and that $z^5 \le M^*_0$ as all points
$x$ on $J$ are either on the boundary or outside
of $Q_x$. Using these inequalities and (\ref{EQ:kell}) in
(\ref{EQ:combconvF1}), we get

\begin{eqnarray}
z^2 \le \frac{\ell}{k} \cdot M + \frac{k-\ell}{k} \cdot M^*_0 =
\frac{\ell}{k} \cdot (M - M^*_0) + M^*_0 \le
\frac{d^2}{w} \cdot (M - M^*_0) + M^*_0 \ ,
\end{eqnarray}

proving \eqref{EQ:UB2}. Using (\ref{EQ:UB2}) in
(\ref{EQ:ubz}), we obtain
\begin{eqnarray*}
\bar z &\le& M_0^* +  \frac{d_1}{d_1 + d_2} \cdot \frac{d_2}{w} \cdot
\left((M - M_0^*) - \delta \cdot (M^*-M^*_0)\right) \ .
\end{eqnarray*}
Since $ \bar z > M_0^*$, the expression in brackets above
is positive and each of the two fractions in front of it are positive
and at most 1. We thus get
\begin{eqnarray*}
\bar z &\le&  M - \delta \cdot (M^*-M^*_0) \ ,
\end{eqnarray*}
proving the result.
\end{proof}

 The next lemma plays
an important role in the proof of Theorem~\ref{THM:main}. Before stating the
lemma, we make the following observation, which will be used in its proof.

\begin{observation}
\label{LEM:closed}
Let $C_1$ be a full-dimensional convex set and let $C_2$ be
a closed set in $\mathbb{R}^m$. Then $C_1\setminus C_2$ is either
full-dimensional or empty.
\end{observation}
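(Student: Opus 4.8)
The plan is to show the contrapositive of the key assertion: if $C_1 \setminus C_2$ is nonempty, then it is full-dimensional. So suppose there is a point $x \in C_1 \setminus C_2$. Since $C_2$ is closed and $x \notin C_2$, there is an open ball $B(x,\varepsilon)$ disjoint from $C_2$. Since $C_1$ is full-dimensional, its interior is nonempty; I would pick any $y \in \inte(C_1)$. If $y \notin C_2$, then because $\inte(C_1)$ is open we can find a small ball around $y$ contained in $\inte(C_1)$, and shrinking it to also avoid the closed set $C_2$ (possible if $y\notin C_2$, but we do not yet know that) would give a full-dimensional subset of $C_1\setminus C_2$. The cleaner route avoids case analysis: consider the open segment $(x,y)$. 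Every point of this open segment lies in $\inte(C_1)$ — this is the standard fact that the segment joining a point of a convex set to an interior point (other than possibly the endpoint $x$ itself) lies in the interior. Hence a whole relatively open neighborhood of each such point lies in $\inte(C_1)$.

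Next I would use that $x \notin C_2$ together with closedness of $C_2$ to get a point on the segment near $x$ that is also outside $C_2$: the set $\{t \in [0,1] : x + t(y-x) \in C_2\}$ is closed and does not contain $0$, so there is $t_0 > 0$ with $x + t(y-x) \notin C_2$ for all $t \in [0, t_0]$. Pick such a $t$ with $0 < t \le t_0$ small enough that $w := x + t(y - x)$ also lies in $\inte(C_1)$ (possible since $x$ is in $C_1$ and $y$ in $\inte(C_1)$, so the half-open segment $(x,y]$ is in $\inte(C_1)$ and $w$ lies on it). Now $w \in \inte(C_1) \setminus C_2$. Since $\inte(C_1)$ is open and $C_2$ is closed, $\inte(C_1) \setminus C_2$ is open, and it is nonempty as it contains $w$; therefore it contains a full-dimensional ball around $w$. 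This ball is contained in $C_1 \setminus C_2$, proving that $C_1 \setminus C_2$ is full-dimensional.

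I expect no real obstacle here; the only point requiring a little care is the well-known convexity fact that the half-open segment from a boundary (or arbitrary) point of a convex set to an interior point lies entirely in the interior, which one cites or proves in one line (if $y \in \inte(C_1)$ and $x \in C_1$, then for $t \in (0,1]$, $x + t(y-x) = (1-t)x + ty$, and a small ball around $y$ scaled by $t$ and translated sits inside $C_1$ by convexity). Everything else is elementary topology: closed sets have open complements, finite intersections of open sets are open, and nonempty open sets in $\mathbb{R}^m$ are full-dimensional. An equally short alternative is: take $x \in C_1 \setminus C_2$ and, by closedness of $C_2$, an $\varepsilon$-ball $B$ about $x$ missing $C_2$; then $B \cap \inte(C_1)$ is open, and it is nonempty because $x \in \cl(\inte(C_1)) = C_1$ (full-dimensionality of the convex set $C_1$ gives $C_1 = \cl(\inte(C_1))$), so it contains a full-dimensional ball lying in $C_1 \setminus C_2$.
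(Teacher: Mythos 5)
Your proposal is correct and follows essentially the same approach as the paper: pick $x \in C_1 \setminus C_2$, use closedness of $C_2$ to get a ball around $x$ disjoint from $C_2$, and conclude that its intersection with $C_1$ is a full-dimensional subset of $C_1 \setminus C_2$ by convexity and full-dimensionality of $C_1$. You merely make explicit the standard convexity fact (the half-open segment to an interior point lies in the interior) that the paper's one-line justification leaves implicit; your closing "alternative" is exactly the paper's argument.
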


\begin{proof}
Suppose that $C_1\setminus C_2$ is not empty and let $x \in C_1\setminus
C_2$. Since $C_2$ is closed, there exists $\epsilon > 0$ such that
the closed ball $B(x,\epsilon) =\{y \in \mathbb{R}^m \st d(y, x)
\leq \epsilon\}$ does not intersect $C_2$. Therefore, $C_1 \cap
B(x,\epsilon) \subseteq C_1\setminus C_2$. But $C_1 \cap
B(x,\epsilon)$ is full-dimensional, as $x \in C_1$, $x\in
\inte(B(x,\epsilon))$, and $C_1$ is convex and
full-dimensional.
\end{proof}

The next lemma applies Lemma~\ref{LEM:ITERGEN2} iteratively $n$ times, with
a polytope ${\PP}^i \subseteq \mathbb{R}^m$ playing the role of $Q_x$ for
$i=1, \ldots, n$. For application $i$, this creates a region
$D^i = \cl({\PP}^i \setminus {\PP}^i(\pi^i, \pi^i_0))$ for which
a guaranteed height reduction is obtained. This guarantee is applied
to a polyhedron $Q \subseteq R({\PP}, M^*, M^*_0)$ where ${\PP}$ is a polytope
contained in ${\PP}^1$.
The statement of the lemma is illustrated in Figure~\ref{FIG:DJ}.

\begin{figure}[ht]
\centering \scalebox{0.8}{
\includegraphics{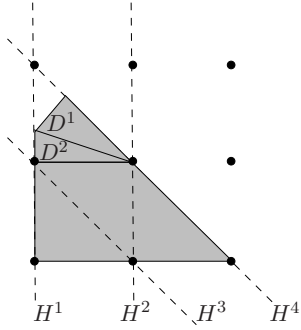}}
\caption{Illustration of the statement of Lemma~\ref{LEM:Dj} in a simple case.
Polytope ${\PP} = {\PP}^1$ is shaded,
${\PP}^2 = {\PP} \setminus D^1$, ${\PP}^3 = {\PP} \setminus (D^1 \cup D^2)$,
the boundary hyperplanes of $(\pi^1, \pi^1_0)$ are $H^1$ and $H^2$, and
the boundary hyperplanes of $(\pi^2, \pi^2_0)$ are $H^3$ and $H^4$.
The statement of the lemma is more general, as ${\PP}^i$ must merely contain
${\PP} \setminus (D^1 \cup \ldots \cup D^{i-1})$ instead of being equal to it, as
in this illustration.
}
\label{FIG:DJ}
\end{figure}

\begin{lemma}
\label{LEM:Dj}
Let ${\PP}$ be a full-dimensional rational polytope in $\mathbb{R}^m$ with
$m \ge 2$.
For $i = 1, \ldots n$, let ${\PP}^i$ be a rational
polytope in $\mathbb{R}^m$, let $(\pi^i, \pi^i_0)$ be
an ${\PP}^i$-intersecting split and
let $D^i = \cl({\PP}^i \setminus {\PP}^i(\pi^i, \pi^i_0))$ for $i = 1, \ldots
n$. Assume that ${\PP} \subseteq {\PP}^1$ and that
${\PP} \setminus (D^1 \cup \ldots \cup D^i)\subseteq {\PP}^{i+1}$, for $i =
1, \ldots, n-1$. Then there exists a finite sequence $S$ of splits
and a value $\Delta > 0$ such that, for any two finite numbers $M^*_0 < M^*$
and any rational polyhedron $Q \subseteq R({\PP}, M^*, M^*_0)$,
one of the following cases holds
\begin{itemize}
\item[(i)] the height of all points in $D^1$ is at most $M^*_0$ with respect to
$Q(S)$;
\item[(ii)] the height of all points in $D^1 \cup \ldots \cup D^n$ is at most
$M - \Delta \cdot (M^*-M^*_0)$ with respect to $Q(S)$, where $M$ is the 
height of $Q$.
\end{itemize}
\end{lemma}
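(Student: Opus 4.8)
The plan is to build the sequence $S$ by concatenating, in order, the sequences of splits produced by applying Lemma~\ref{LEM:ITERGEN2} to each triplet $({\PP}^i, (\pi^i, \pi^i_0))$ for $i = 1, \ldots, n$: that is, $S = S_1 S_2 \cdots S_n$, where $S_i$ is ``a round of splits around ${\PP}^i$ followed by $(\pi^i, \pi^i_0)$'' (ignoring the degenerate cases where Lemma~\ref{LEM:ITERGEN2} needs $m \ge 2$ and full-dimensionality — those are handled by the reduction coefficient being $1$ and the englobing observations, exactly as in the proof of Lemma~\ref{LEM:ITERGEN2}). Set $\delta_i := \delta({\PP}^i, (\pi^i, \pi^i_0)) > 0$, and let $\Delta := \min_i \prod_{\ell \le i} \delta_\ell$ — actually a cleaner choice, which I expect to work, is to track the height reduction additively and set $\Delta$ to be a suitably small positive combination of the $\delta_i$'s; the precise bookkeeping is a routine calculation I will not grind through here, but the value will be positive because there are finitely many steps and each $\delta_i > 0$.

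The core of the argument is an induction on $i$ showing that after the prefix $S_1 \cdots S_i$ has been applied to $Q$, the resulting polyhedron $Q(S_1\cdots S_i)$ still sits inside a region of the form $R({\PP}^{i+1}, M^*, M^{(i)}_0)$ for an appropriate $M^{(i)}_0$, and that the height of all points in $D^1 \cup \cdots \cup D^i$ is at most $M - (\text{accumulated reduction})\cdot(M^*-M^*_0)$ with respect to $Q(S_1\cdots S_i)$. The base case $i=0$ is the hypothesis $Q \subseteq R({\PP}, M^*, M^*_0)$ together with ${\PP} \subseteq {\PP}^1$, which gives $Q \subseteq R({\PP}^1, M^*, M^*_0)$ since enlarging the inner polytope only enlarges $R$. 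For the inductive step: Lemma~\ref{LEM:ITERGEN2} applied with $Q_x = {\PP}^{i}$ and the polyhedron $Q(S_1\cdots S_{i-1}) \subseteq R({\PP}^i, M^*, M^{(i-1)}_0)$ gives that every point of $D^i = \cl({\PP}^i \setminus {\PP}^i(\pi^i, \pi^i_0))$ has height at most $\max\{M^{(i-1)}_0, M - \delta_i(M^* - M^{(i-1)}_0)\}$ with respect to $Q(S_1\cdots S_i)$. If the first term of the max ever wins — i.e. the height of $D^i$ (and in particular of $D^1$, which by the containment hypotheses lies inside ${\PP}^i$ and is left untouched once its height drops below the threshold) has fallen to $\le M^*_0$ — we land in case (i). Otherwise we update: outside ${\PP} \setminus (D^1\cup\cdots\cup D^i) \subseteq {\PP}^{i+1}$, the height is bounded by the new smaller constant $M^{(i)}_0 = M - \delta_i(M^*-M^{(i-1)}_0)$ (here I use that $R$ monotonicity again, plus that the round of splits around ${\PP}^i$ only lowers heights), which re-establishes $Q(S_1\cdots S_i) \subseteq R({\PP}^{i+1}, M^*, M^{(i)}_0)$, closing the induction. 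At the end, $i=n$ gives case (ii).

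I expect the main obstacle to be the careful handling of case (i) versus case (ii) — specifically, making sure that once the height on $D^1$ drops to $\le M^*_0$ it stays there under the remaining splits $S_{i+1}\cdots S_n$ (which requires that $D^1 \subseteq {\PP}^{i+1}$ and that subsequent rounds around ${\PP}^{j}$ and the intersecting splits $(\pi^j,\pi^j_0)$ never raise the height of a point — true because every split operation is an intersection of $\conv(\cdot^{\le}\cup\cdot^{\ge})$ with the ambient space and hence height-nonincreasing, cf. Observation~\ref{OBS:ConvSplit}), and ensuring the intermediate constants $M^{(i)}_0$ stay below $M^*$ so that Lemma~\ref{LEM:ITERGEN2} remains applicable. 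The second subtlety is that the lemma's hypothesis is $Q \subseteq R({\PP}, M^*, M^*_0)$, not $Q \subseteq R({\PP}^1, \ldots)$, so the very first inclusion step must invoke ${\PP}\subseteq{\PP}^1$ and the monotonicity of $R$ in its polytope argument; I would state and prove that monotonicity as a one-line sub-observation (if $Q_x \subseteq Q'_x$ with $Q_x$ in the relative interior of $Q'_x$, then $R(Q_x, M, M_0) \subseteq R(Q'_x, M, M_0)$) before the main induction. Everything else is bookkeeping of the finitely many positive reduction coefficients into a single $\Delta > 0$.
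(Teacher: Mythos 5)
Your overall plan---concatenate ``round of splits around ${\PP}^i$ plus $(\pi^i,\pi^i_0)$'' for $i=1,\ldots,n$ and multiply the reduction coefficients---is the paper's strategy, but the step you dismiss as routine bookkeeping is where the proof actually lives, and your sketch of it contains an error. You claim that after applying $S_1\cdots S_i$ one re-establishes $Q(S_1\cdots S_i)\subseteq R({\PP}^{i+1},M^*,M_0^{(i)})$ with the \emph{original} outer parameter $M^*$. Lemma~\ref{LEM:QinR} does not give this: it yields containment in $R({\PP}^{i+1},M',M_0^{(i)})$ where $M'$ is the \emph{current} height of $Q(S_1\cdots S_i)$, and since for $\bar x\notin\relint(Q_x)$ the region $R(Q_x,M,M_0)$ imposes the bound $M_0-\frac{d(\bar x,Q_x)}{\diam(Q_x)}(M-M_0)$, increasing the outer parameter makes the region \emph{smaller} outside $Q_x$; thus $R({\PP}^{i+1},M',M_0^{(i)})\not\subseteq R({\PP}^{i+1},M^*,M_0^{(i)})$ and your containment can fail. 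The paper's proof therefore carries the current height $M^{j+1}$ as the outer parameter, applies Lemma~\ref{LEM:ITERGEN2} with the pair $\bigl(M^{j+1},\,M-\Delta^{j}(M^*-M^*_0)\bigr)$, and then bounds $(1-\delta^{j+1})M^{j+1}+\delta^{j+1}\bigl(M-\Delta^{j}(M^*-M^*_0)\bigr)\le M-\delta^{j+1}\Delta^{j}(M^*-M^*_0)$, which is exactly where the multiplicative recursion $\Delta^{j}=\delta^{j}\Delta^{j-1}$ (hence $\Delta$) comes from; this is the calculation you would have to grind through, and it cannot be run with $M^*$ held fixed. Your trigger for case (i) is also off: at step $i$ the first term of the max in Lemma~\ref{LEM:ITERGEN2} is $M_0^{(i-1)}$, not $M^*_0$, so ``the first term wins'' does not mean the height of $D^1$ has dropped to $M^*_0$; the paper settles the dichotomy once, at the outset, by comparing $M^*_0$ with $M-\Delta^1(M^*-M^*_0)$, which is precisely why the lemma has two cases.

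The degenerate cases are also not ``handled exactly as in the proof of Lemma~\ref{LEM:ITERGEN2}.'' The hypotheses allow ${\PP}^i$ to be lower-dimensional (for instance when $D^1\cup\cdots\cup D^{i-1}$ already covers ${\PP}$, the containment hypothesis is vacuous), and then a ``round of splits around ${\PP}^i$,'' the reduction coefficient, and Lemma~\ref{LEM:ITERGEN2} are all undefined or inapplicable---these notions require a full-dimensional polytope---so your sequence $S$ is not even well defined in that situation. The paper avoids this by truncating at the first index $q$ for which ${\PP}\setminus(D^1\cup\cdots\cup D^q)$ fails to be full-dimensional (which guarantees ${\PP}^1,\ldots,{\PP}^q$ are full-dimensional, as each contains a full-dimensional set), and when $q<n$ it invokes Observation~\ref{LEM:closed} to conclude that ${\PP}\setminus(D^1\cup\cdots\cup D^q)$ is actually empty, so that conclusion (ii) for all of $D^1\cup\cdots\cup D^n$ follows because every point outside ${\PP}$ has height at most $M^*_0\le M-\Delta(M^*-M^*_0)$. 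Both of these points need to be repaired for your outline to become a complete proof.
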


\begin{proof}
Let $q$ be the smallest index $\{1, \ldots, n-1 \}$ such that
${\PP} \setminus (D^1 \cup \ldots \cup D^q)$ is not full-dimensional
and let $q = n$ if no such index exists. This ensures that ${\PP}^j$
is full-dimensional for $j = 1, \ldots, q$.
For $j = 1, \ldots, q$, let $R^j$ denote the round of splits around
${\PP}^{j}$. Let the sequence $S$ be $S = (R^1, (\pi^1, \pi^1_0), R^2,
(\pi^2, \pi^2_0), \ldots, R^q, (\pi^q, \pi^q_0))$.

Let $Q^1 = Q$ and let $Q^{j+1} = Q^{j}(R^j, (\pi^j, \pi^j_0))$,
and let $0 < \delta^j \le 1$ be the reduction coefficient for
$({\PP}^j, (\pi^j, \pi^j_0))$ for $j = 1, \ldots, q$.
Let $\Delta^1 = \delta^1$, let
$\Delta^j = \delta^j \cdot \Delta^{j-1}$ for $j = 2, \ldots, q$, and
observe that $\Delta^1 \ge \Delta^2 \ge \ldots \ge \Delta^q > 0$.
Let $\Delta := \Delta^q$.

Since $Q \subseteq R({\PP}, M^*, M^*_0)$ and ${\PP} \subseteq {\PP}^1$,
we trivially have $Q \subseteq R({\PP}^1, M^*, M^*_0)$. If
$M^*_0 \ge M - \Delta^1 \cdot (M^*-M^*_0)$, then Lemma~\ref{LEM:ITERGEN2} applied to
${\PP}^1$, $(\pi^1, \pi^1_0)$, and $Q$ proves that (i)
holds for $Q^2$, proving the result for $Q(S)$ as $Q(S) \subseteq Q^2$.
Therefore we may assume that $M^*_0 < M - \Delta^1 \cdot (M^*-M^*_0)$.

We claim that the height of any point in $D^1 \cup \ldots \cup D^j$
is at most $M - \Delta^j \cdot (M^*-M^*_0)$ with respect to 
$Q^{j+1}$ for $j = 1, \ldots, q$. We prove this claim by induction on $j$.

For $j = 1$, this is implied by Lemma~\ref{LEM:ITERGEN2} as it shows that
the height of all points in $D^1$ is at most $M - \Delta^1 \cdot (M^*-M^*_0)$ 
with respect to $Q^2$.

Assume now that the claim is true for some
$1 \le j < q$ and we prove it for $j + 1$. Let $M^{j+1}$
be the height of $Q^{j+1}$.
If $M^{j+1} \le M - \Delta^{j} \cdot (M^*-M^*_0)$, then the claim holds for $j+1$
as $\Delta^{j} \ge \Delta^{j+1}$.
Otherwise, as ${\PP} \setminus (D^1 \cup \ldots \cup D^{j})
\subseteq {\PP}^{j+1}$, the induction hypothesis implies that
 all points on the boundary or outside of ${\PP}^{j+1}$
have height at most  $M - \Delta^{j} \cdot (M^*-M^*_0)$ with respect to 
$Q^{j+1}$. Therefore Lemma~\ref{LEM:QinR} shows that
$Q^{j+1} \subseteq R({\PP}^{j+1}, M^{j+1}, M -\Delta^{j} \cdot (M^*-M^*_0))$.
We can thus use Lemma~\ref{LEM:ITERGEN2} to get that the
height of any point in $D^{j+1}$ with respect to $Q^{j+2}$ is at most
\begin{eqnarray}
\label{EQ:max}
\max\{M - \Delta^{j} \cdot (M^*-M^*_0), \ M^{j+1}- \delta^{j+1} \cdot (M^{j+1} -
(M - \Delta^{j} \cdot (M^*-M^*_0)))\} \ .
\end{eqnarray}
The second term can be rewritten as
\begin{eqnarray*}
&&(1 - \delta^{j+1}) \cdot M^{j+1} + \delta^{j+1} \cdot (M -
\Delta^{j} \cdot (M^*-M^*_0)) \\
&\le& (1 - \delta^{j+1}) \cdot M + \delta^{j+1} \cdot (M - \Delta^{j} \cdot (M^*-M^*_0))\\
&=&  M - \delta^{j+1} \cdot \Delta^{j} \cdot (M^*-M^*_0) .
\end{eqnarray*}

As $0 < \delta^{j+1} \le 1$, we obtain that the maximum in
(\ref{EQ:max}) is at most $M - \delta^{j+1}\cdot\Delta^{j} \cdot (M^*-M^*_0)$.
Thus, the height of any point in $D^1 \cup \ldots \cup D^{j+1}$ is
at most $M - \Delta^{j+1} \cdot (M^*-M^*_0)$ with respect to
$Q^{j+2}$, proving the claim for $j + 1$. This completes the proof of the claim.

If $q = n$, then point (ii) in the
statement of the lemma is satisfied, as we have shown that the
height of any point in $D^1 \cup \ldots \cup D^q$ is at most
$M - \Delta^q \cdot (M^*-M^*_0) = M - \Delta \cdot (M^*-M^*_0)$ with respect to
$Q^{q+1} = Q(S)$. If $q < n$ then, by definition of $q$, we have that
${\PP} \setminus (D^1 \cup \ldots \cup D^q)$ is not full-dimensional.
Observe that, as the $D^i$'s are closed sets,
$(D^1 \cup \ldots \cup D^q)$ is a closed set. Then
Observation~\ref{LEM:closed} implies that ${\PP} \setminus (D^1 \cup \ldots
\cup D^q)$ is empty. Since $M^*_0 \le M - \Delta^q \cdot (M^*-M^*_0)$ and 
since the height
of all points outside ${\PP}$ is at most $M^*_0$ with respect to $Q(S)$, 
this shows that the height of any point in the $x$-space is at most 
$M - \Delta^q \cdot (M^*-M^*_0)$ with respect to $Q(S)$, 
proving that (ii) is satisfied.
\end{proof}

This lemma is sufficient to prove the following corollary.

\begin{corollary}
\label{COR:redHeight}
Consider a mixed integer set of the form \eqref{SI} with $m = 2$. 
Let ${\PP} \subseteq \mathbb{R}^2$ be a rational lattice-free
polytope containing $f$ 
in its interior. Let $(\pi^1, \pi^1_0), \ldots, (\pi^n, \pi^n_0)$
be a sequence of ${\PP}$-intersecting splits and let ${\PP}^n$ be the polytope
obtained by applying the sequence on ${\PP}$. Let $(\pi, \pi_0)$
be an ${\PP}^n$-englobing split. Then there exists a finite number $q$
such that the height of the rank-$q$ split closure of ${\QQ}^{\PP}$ is 
at most zero.
\end{corollary}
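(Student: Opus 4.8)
The plan is to reduce, via Observation~\ref{OBS:rankQL}, to exhibiting a finite sequence of split operations (rounds of splits and individual splits) that brings the height of ${\QQ}^{\PP}$ to at most $0$, and then to count: if such a sequence consists of $q$ operations, each of which contains the rank-$1$ split closure of the current polytope, the rank-$q$ split closure of ${\QQ}^{\PP}$ is contained in the final polytope and hence has height at most $0$. The starting observation is that ${\QQ}^{\PP}$ is a cone of height $1$ (attained at $f$) with $\{x\st\exists z>0,\ (x,z)\in{\QQ}^{\PP}\}=\inte(\PP)=\relint(\PP)$ and height at most $0$ at every $\bar x\notin\inte(\PP)$; Lemma~\ref{LEM:QinR} then yields ${\QQ}^{\PP}\subseteq R(\PP,1,0)$. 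Crucially, every polytope obtained from ${\QQ}^{\PP}$ by split operations is still contained in ${\QQ}^{\PP}$, hence in the \emph{same} region $R(\PP,1,0)$; keeping this reference region fixed along the entire iteration is what makes the height reductions additive (by a constant) rather than merely multiplicative, and is the reason the procedure terminates in finitely many steps.

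I would then prove, by induction on the length $n$ of the $\PP$-intersecting sequence, the following statement: for any rational full-dimensional polytope $\PP\subseteq\mathbb{R}^2$ admitting $n$ $\PP$-intersecting splits producing $\PP^n$ and a $\PP^n$-englobing split, there are a finite sequence $\Sigma$ of split operations and a positive integer $N$ such that, for all finite $M^*_0<M^*$ and every rational polyhedron $Q\subseteq R(\PP,M^*,M^*_0)$, performing the operations of $\Sigma$ on $Q$ a total of $N$ times gives a polytope of height at most $M^*_0$. When $n=0$, or more generally whenever the first split $(\pi^1,\pi^1_0)$ is $\PP$-englobing (which, by the fact used in the proof of Lemma~\ref{LEM:ITERGEN2}, includes the case where $\PP(\pi^1,\pi^1_0)$ is not full-dimensional), one takes $\Sigma$ to be that single split and $N=1$: since $\PP$ lies in the closed unit slab of the split, $\inte(\PP)$ lies in the open slab, so by Observation~\ref{OBS:ConvSplit}(ii) every point over $\inte(\PP)$ in $Q(\pi^1,\pi^1_0)$ is a convex combination of points over the two boundary hyperplanes, which lie outside $\inte(\PP)$ and hence have height at most $M^*_0$.

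The remaining (inductive) case has $n\ge1$, $\PP^1:=\PP(\pi^1,\pi^1_0)$ full-dimensional, and $(\pi^1,\pi^1_0)$ not $\PP$-englobing. The tail $(\pi^2,\pi^2_0),\dots,(\pi^n,\pi^n_0)$ is then $\PP^1$-intersecting, produces $\PP^n$ from $\PP^1$, and $(\pi,\pi_0)$ is $\PP^n$-englobing, so the induction hypothesis supplies $(\Sigma_1,N_1)$ for $\PP^1$. With $R^1$ a round of splits around $\PP$ and $\delta^1=\delta(\PP,(\pi^1,\pi^1_0))\in(0,1]$, I would set $\Sigma:=(R^1,(\pi^1,\pi^1_0),\Sigma_1,\dots,\Sigma_1)$ ($N_1$ copies of $\Sigma_1$) and $N:=\lceil1/\delta^1\rceil$. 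The heart of the argument is the one-step estimate: for any rational $Q'\subseteq R(\PP,M^*,M^*_0)$ of height $M'$, the polytope $Q'(\Sigma)$ has height at most $\max\{M^*_0,\ M'-\delta^1(M^*-M^*_0)\}$. Indeed, Lemma~\ref{LEM:ITERGEN2} (equivalently, Lemma~\ref{LEM:Dj} with $n=1$) bounds the height on $D^1:=\cl(\PP\setminus\PP^1)$ by that quantity after $(R^1,(\pi^1,\pi^1_0))$; the elementary fact that $\inte(\PP)\setminus\inte(\PP^1)\subseteq D^1$ (a point of $\inte(\PP)$ not in $\inte(\PP^1)$ has, in every neighborhood contained in $\PP$, a point outside $\PP^1$) upgrades this to a bound on the height at \emph{every} point outside $\inte(\PP^1)$; Lemma~\ref{LEM:QinR} re-embeds the result in $R(\PP^1,\,\cdot\,,\,\cdot\,)$ with lower endpoint equal to that quantity; and the $N_1$ applications of $\Sigma_1$ lower the height everywhere to it by the induction hypothesis. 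Iterating the one-step estimate, $N=\lceil1/\delta^1\rceil$ applications of $\Sigma$ reduce the height of any $Q\subseteq R(\PP,M^*,M^*_0)$ to at most $M^*_0$, completing the induction.

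To finish, apply the statement with $\PP$ the polytope of the corollary, $(M^*,M^*_0)=(1,0)$, and $Q={\QQ}^{\PP}\subseteq R(\PP,1,0)$: $N$ applications of $\Sigma$ to ${\QQ}^{\PP}$ produce a polytope of height at most $0$, and if this consists of $q$ rounds-of-splits and individual splits then, since each such operation contains the rank-$1$ split closure of the current polytope, the rank-$q$ split closure of ${\QQ}^{\PP}$ has height at most $0$. I expect the main difficulty to be exactly the bookkeeping that keeps the reduction additive: one must resist re-normalising the reference pair $(M^*,M^*_0)$ to the current height (which would degrade the guarantee to a multiplicative factor $1-\delta^1$ and break finiteness), and one must build $\Sigma$ so that it not only cuts $\PP$ toward $\PP^1$ but also recursively cleans up the height inside $\PP^1$ \emph{before} $R^1$ and $(\pi^1,\pi^1_0)$ are applied again --- only then does a further application of $\Sigma$ again shave off a full $\delta^1(M^*-M^*_0)$.
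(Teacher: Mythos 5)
Your proposal is correct, and it rests on the same machinery as the paper: the regions $R(\cdot,M^*,M^*_0)$, Lemma~\ref{LEM:QinR}, the reduction coefficient together with the height-reduction guarantee of Lemma~\ref{LEM:ITERGEN2}, and the key bookkeeping point that keeping the reference pair $(M^*,M^*_0)$ fixed makes the per-pass reduction additive so that roughly $\lceil 1/\delta\rceil$ passes suffice. The organization, though, is genuinely different. The paper invokes Lemma~\ref{LEM:Dj} once for the whole chain of $n$ intersecting splits, appends the englobing split $(\pi,\pi_0)$ to the resulting sequence $S$, applies $(S,(\pi,\pi_0))$ repeatedly until the height on $\cl(\PP\setminus\PP^1)$ drops to $M^*_0$, and then iterates this in $n-1$ further stages along $\PP^1,\ldots,\PP^{n-1}$, re-applying Lemma~\ref{LEM:Dj} with the current height as upper reference; the englobing split is used in every pass to control the global height. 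You instead peel off one intersecting split at a time by induction on $n$, needing only Lemma~\ref{LEM:ITERGEN2} (i.e.\ Lemma~\ref{LEM:Dj} with $n=1$), and you phrase the inductive statement so that $\Sigma$ and $N$ are independent of $(M^*,M^*_0)$ and of $Q$ --- this universal quantification is exactly what lets you invoke the tail's $\Sigma_1^{N_1}$ with the data-dependent pair $(M'',\mu)$ after Lemma~\ref{LEM:QinR} re-embeds the current polyhedron in $R(\PP^1,\cdot,\cdot)$ --- while the englobing split appears only at the base of the recursion (where Observation~\ref{OBS:ConvSplit}(ii) kills the height in one step). The supporting details you use are sound: a non-full-dimensional $\PP(\pi^1,\pi^1_0)$ does force the split to be $\PP$-englobing, $\inte(\PP)\setminus\inte(\PP^1)\subseteq\cl(\PP\setminus\PP^1)$ upgrades the $D^1$-bound to all points outside $\inte(\PP^1)$, and each round of splits or single split operation contains the rank-one split closure, so counting operations converts your finite sequence into a finite split rank. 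Your route buys a cleaner, self-contained induction with explicit nested iteration counts; the paper's buys a flat iteration that reuses Lemma~\ref{LEM:Dj} in its full generality.
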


\begin{proof} We prove the following more general result.
Let $Q_x$ be a full-dimensional rational polytope
in $\mathbb{R}^m$ with $m \ge 2$ and let $M^*_0 < M^*$ be two finite
numbers. Let $Q \subseteq R(Q_x, M^*, M^*_0)$ be a rational
polyhedron of height $M$ such that the height of any integer point in $Q_x$ is
at most $M^*_0$. Let $(\pi^1, \pi^1_0), \ldots, (\pi^n, \pi^n_0)$
be a sequence of $Q_x$-intersecting splits and let $Q_x^n$ be the polytope
obtained by applying the sequence on $Q_x$. Let $(\pi, \pi_0)$
be a $Q^n_x$-englobing split. Then there exists a finite number $q$
such that the height of the rank-$q$ split closure of $Q$ is at most $M^*_0$.

Let $Q^i_x$ for $i=1, \ldots, n$ be the polytopes obtained from
applying the sequence of splits on $Q_x$.
Let $S$ and $\Delta$ be obtained by applying Lemma~\ref{LEM:Dj}
for ${\PP} := Q_x$, ${\PP}^1 := Q_x$, ${\PP}^{i+1} := Q_x^{i}$ for
$i = 1, \ldots, n-1$, $M^*$, $M^*_0$ and $Q$. Assume that $M - M^*_0 \ge \Delta \cdot (M^*-M^*_0)$.
Then all points on the boundary of $Q_x^n$ have height at most
$M_n := \max \{M^*_0, \ M - \Delta  \cdot (M^*-M^*_0)   \}$ with
respect to  $Q(S)$.
Thus the height with respect to $Q(S)$ of any point on one of the boundary
hyperplanes of the disjunction $(\pi, \pi_0)$ is at most $M_n$, and
the height of $Q(S)(\pi, \pi_0)$ is at most $M_n$.
Applying the sequence $(S, (\pi, \pi_0))$ on $Q$ at most
$\lceil \frac{M-M^*_0}{\Delta \cdot (M^*-M^*_0)}\rceil$
times, we get a polyhedron $Q^1$ of height $M^1$ and for which all points in
$\cl(Q_x \setminus Q_x^1)$ have height at most $M^*_0$.
We can then iterate the above argument, applying Lemma~\ref{LEM:Dj}
in iteration $j = 1, \ldots, n-1$ to
${\PP} := Q^j_x$, ${\PP}^i := Q_x^{i+j-1}$ for $i=1, \ldots, n-j$,
$M^* := M^j$, $M^*_0$ and $Q := Q^j$. After these $n-1$ iterations, we get
a polyhedron $Q^n$ whose height is at most $M^*_0$.

The proof of the statement of the corollary follows by setting in the above
proof $Q_x := {\PP}$, $Q := Q^{\PP}$, $M^* := 1$ , and $M^*_0 := 0$.
\end{proof}

We note that this result can be used to prove the Dey-Louveaux theorem
stating that the split rank of any ${\PP}$-cut is finite whenever
${\PP} \subseteq \mathbb{R}^2$ is a maximal lattice-free rational polytope distinct
from a triangle of Type 1 with rays going into its corners.
Indeed, for each case (quadrilateral, triangles of
Type 2 or 3, and triangles of Type 1 with at least one corner ray missing),
one can exhibit explicitly the intersecting splits (at most two of them)
and the englobing split
required by Corollary~\ref{COR:redHeight}.
However, to handle the general case where ${\PP} \subset \mathbb{R}^m$ with
$m \ge 3$, a generalization of this corollary is presented in
Section~\ref{SEC:MAIN}.

\subsection{Enlarging the polyhedron.}
\label{SEC:ENLARGE}

The purpose of this section is to prove Theorem~\ref{LEM:enlarge} showing 
that it is possible 
to enlarge a lattice-free rational polytope ${\PP} \subset \mathbb{R}^m$ to a
rational lattice-free polytope ${\PP}'$ such that, for all facet $F$ of ${\PP}'$, 
the split $(\pi(F), \pi_0(F))$ is ${\PP}'$-intersecting and such that
${\PP}$ has the 2-hyperplane property if and only if ${\PP}'$ does. This is a useful
result, as this allows us to show that the effect of a Chv\'atal split
on a polytope can be obtained 
by a sequence of intersecting splits for the enlarged polytope. This
is developed in Section~\ref{SEC:CHVATAL}.

A {\em lattice subspace} of $\mathbb{R}^m$ is
an affine space $x+V$ where $x \in \mathbb{Z}^m$ and $V$ is a linear 
space generated 
by rational vectors. Equivalently, an affine space 
${\cal A} \subseteq \mathbb{R}^m$
is a lattice subspace if it is spanned by the integer points in $\cal A$ (see
Barvinok~\cite{barv} for instance). We need the following technical result.

\begin{lemma}
\label{LEM:rotation}
Let ${\PP}$ be a full-dimensional rational polytope in $\mathbb{R}^m$
with $m \ge 2$ given by $\{x \ | \ Ax \leq b\}$, where $A$ is
an integral matrix and $b$ is an integral vector. The rows of $A$
are denoted by $a_1,\ldots, a_n$. Suppose $a_1x \leq b_1$ defines a
facet of ${\PP}$ with no integer point contained in its affine hull $\{x
\ | \  a_1x = b_1\}$. Let $\tilde{A}$ be the matrix obtained from
$A$ by removing row $a_1$ and let $\tilde{b}$ be the vector obtained
from $b$ by removing its first component $b_1$. Then there exists a
rational inequality $a'x \leq b'$ such that $\tilde{{\PP}} := \{x \ | \
\tilde{A}x \leq \tilde{b}, a'x \leq b' \}$ contains the same set of
integer points as ${\PP}$, ${\PP} \subseteq \tilde{{\PP}}$, and the hyperplane
$\{x \ | \ a'x = b' \}$ contains integer points.
\end{lemma}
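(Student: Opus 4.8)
The plan is to start from the facet-defining inequality $a_1 x \le b_1$ whose supporting hyperplane $H := \{x \mid a_1 x = b_1\}$ contains no integer point, and to rotate it (while keeping it valid for $\PP$) until its supporting hyperplane picks up an integer point. Concretely, consider the family of inequalities $a_1 x \le b_1 + \lambda (c x - d)$ for $\lambda \ge 0$, where $c x \le d$ is some fixed inequality valid for $\PP$ and chosen so that $H \cap \{cx = d\}$ is a face of $\PP$ of dimension $m-2$ (for instance, $cx \le d$ can be taken to be one of the facets $a_i x \le b_i$ adjacent to the facet $a_1 x \le b_1$, so that the intersection is a ridge of $\PP$). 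Each such inequality is valid for $\PP$ since $\lambda \ge 0$ and $cx - d \le 0$ on $\PP$; moreover $\tilde\PP_\lambda := \{x \mid \tilde A x \le \tilde b, \ a_1 x \le b_1 + \lambda(cx-d)\}$ is a polytope containing $\PP$, and it is nondecreasing in $\lambda$ in the sense that $\tilde\PP_\lambda \subseteq \tilde\PP_{\lambda'}$ for $\lambda \le \lambda'$ — wait, one must be careful about direction: as $\lambda$ grows the new halfspace expands, so $\tilde\PP_\lambda$ grows. The key point is that the ridge $H \cap \{cx=d\}$ is fixed under this rotation, so the new hyperplane $H_\lambda := \{x \mid a_1 x = b_1 + \lambda(cx - d)\}$ always contains that ridge and simply pivots around it.

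Next I would argue that for $\lambda$ small enough $\tilde\PP_\lambda$ contains the same integer points as $\PP$. Since $\PP$ is a rational polytope and $H$ contains no integer point, there is a positive distance between $\mathbb{Z}^m \setminus \PP$ and $\PP$ in the relevant region; more precisely, the set of integer points in any bounded neighborhood of $\PP$ is finite, and none of them lies in $H$, so a sufficiently small rotation $\lambda$ does not swallow any new integer point. Hence there is a maximal $\lambda^* > 0$ (or the rotation can proceed to the point where $H_\lambda$ becomes parallel to or coincides with another facet) such that $\tilde\PP_{\lambda^*}$ still has the same integer hull as $\PP$ but $H_{\lambda^*}$ now contains an integer point. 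The existence of such a critical value follows from a compactness/continuity argument: consider the supremum of all $\lambda$ for which no new integer point is acquired; at this supremum either $H_\lambda$ meets an integer point, or the inequality has become redundant (the new facet disappears), and in the latter degenerate case one can instead stop the rotation at a rational $\lambda$ where $H_\lambda$ passes through a lattice point of the ambient lattice subspace — which is guaranteed to happen because a rational hyperplane rotating around a rational $(m-2)$-flat sweeps out hyperplanes, rationally many of which are "lattice hyperplanes." Setting $a' x \le b'$ to be $(a_1 - \lambda^* c)x \le b_1 - \lambda^* d$ (cleared of denominators to be integral, or left rational as the statement permits) gives the desired inequality, and $\tilde\PP := \tilde\PP_{\lambda^*}$ does the job.

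The main obstacle I anticipate is making the "rotate until you hit an integer point" step fully rigorous, in particular (a) choosing the axis of rotation $H \cap \{cx = d\}$ so that the rotated hyperplane genuinely passes through a lattice point for some finite $\lambda$ rather than, say, only in the limit or never — this is where the hypothesis that $H$ lies in the affine hull situation of a \emph{full-dimensional rational} polytope matters, since the $(m-2)$-flat we rotate around, being a ridge of a rational polytope, is a rational affine subspace, and one must invoke that a rational hyperplane containing a fixed rational $(m-2)$-flat and "generic" otherwise will contain lattice points of the lattice subspace it spans; and (b) ensuring the rotation can be stopped \emph{before} it acquires an integer point strictly in the interior, i.e. that the first lattice point encountered lands exactly on the boundary hyperplane. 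Point (b) follows by taking $\lambda^*$ to be the infimum over directions in which a new lattice point would be acquired, so that at $\lambda^*$ the offending lattice point is on $H_{\lambda^*}$ and not yet inside; the finiteness of the set of candidate lattice points (within the bounded polytope $\tilde\PP_{\lambda}$ for $\lambda$ in a compact range) is what makes this infimum attained and positive. A clean way to organize this is: first show $\tilde\PP_\lambda$ and $\PP$ have the same integer hull for all small $\lambda$; then let $\lambda^*$ be the supremum of such $\lambda$; then check that at $\lambda^*$ either an integer point lies on $H_{\lambda^*}$ (done) or the new constraint is implied by $\tilde A x \le \tilde b$, in which case replace the rotation axis by a different ridge and repeat — this process terminates because $\PP$ has finitely many ridges and each step strictly enlarges $\tilde\PP$.
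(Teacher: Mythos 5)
Your rotation strategy contains a fixable sign slip and one genuine gap. The slip: with your family $a_1x\le b_1+\lambda(cx-d)$ and $cx\le d$ valid for ${\PP}$, the new inequality is \emph{stronger} than $a_1x\le b_1$ wherever $cx<d$, so it is not valid for ${\PP}$; the relaxing pivot about the ridge is $a_1x\le b_1-\lambda(cx-d)$, i.e.\ $(a_1+\lambda c)x\le b_1+\lambda d$, a nonnegative combination of the two valid inequalities. The real gap is the stopping rule. An integer point $p$ with $\tilde Ap\le\tilde b$, $a_1p>b_1$, $cp<d$ enters $\tilde{\PP}_\lambda$ exactly at $\lambda_p=\frac{a_1p-b_1}{d-cp}$, and it enters \emph{through} the rotating hyperplane; so at $\lambda^*=\inf_p\lambda_p$ the first such point lies on $H_{\lambda^*}$ and hence inside the closed polyhedron $\tilde{\PP}_{\lambda^*}$. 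Thus your ``either $H_{\lambda^*}$ meets an integer point (done)'' is not done: at that instant $\tilde{\PP}$ has strictly more integer points than ${\PP}$. The lemma only requires the hyperplane $\{a'x=b'\}$ to contain integer points \emph{somewhere}, typically outside $\tilde{\PP}$; your stopping rule forces the integer point to be inside. Stopping at some $\lambda<\lambda^*$ preserves the integer points, but then you owe a separate argument that the pencil through your axis contains, for a parameter in the open interval $(0,\lambda^*)$, a hyperplane through a lattice point --- and this is not automatic, since your axis is a ridge of ${\PP}$ lying in the integer-free hyperplane $\{a_1x=b_1\}$ and so contains no lattice point itself. (It can be rescued: the open wedge between $H_0$ and $H_{\lambda^*}$ contains arbitrarily large balls far from the axis, hence a lattice point $z$, and the hyperplane through the ridge and $z$ works; one must also prove $\lambda^*>0$, which is not a finiteness argument because removing the facet can make $\{\tilde Ax\le\tilde b\}$ unbounded, so $\tilde{\PP}_\lambda$ need not be a polytope and the candidate set need not be finite.)

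For comparison, the paper sidesteps all of this with a different construction: with $G=\gcd(a_1)$ it passes to the next lattice level $\{a_1x=G\lceil b_1/G\rceil\}$, forms the bounded slice ${\PP}'=\{\tilde Ax\le\tilde b,\ a_1x=G\lceil b_1/G\rceil\}$, chooses inside that lattice hyperplane an $(m-2)$-dimensional lattice subspace $\mathcal{A}$ disjoint from ${\PP}'$ (possible because ${\PP}'$ is bounded), and takes $\{a'x=b'\}$ through $\mathcal{A}$ separating ${\PP}$ from ${\PP}'$. The hyperplane then contains integer points by construction, and a short segment argument (any integer point of $\tilde{\PP}\setminus{\PP}$ would force a point of ${\PP}'$ into $\tilde{\PP}$, contradicting the separation) shows no new integer points are gained. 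Your proposal, as written, is missing the idea that makes the integer points on the new hyperplane compatible with not enlarging the integer hull.
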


\begin{proof}
Let $G$ be the greatest common divisor of the coefficients in $a_1$.
As $\{x \ | \ a_1x = b_1\}$ does not contain any integer points,
$\frac{b_1}{G}$ is fractional by B\'ezout's Theorem~\cite{Seroul}. Consider the set ${\PP}' = \{x \ | \
\tilde{A}x \leq \tilde{b}, \; a_1x = G \cdot
\lceil\frac{b_1}{G}\rceil\}$. As $\rec({\PP}') \subseteq \rec({\PP})$ (where
$\rec({\PP})$ denotes the recession cone of ${\PP}$), ${\PP}'$ is a polytope
contained in the hyperplane $\{x \ | \ a_1x = G \cdot
\lceil\frac{b_1}{G}\rceil\}$. Since $\{x \ | \ a_1x = G \cdot
\lceil\frac{b_1}{G}\rceil\}$ is an $m-1$ dimensional lattice subspace,
its integer points can be partitioned into infinitely many parallel
lattice subspaces of dimension $m-2$. As ${\PP}'$ is bounded, one of them
is an $m-2$ dimensional lattice subspace $\cal A$ that does not
intersect ${\PP}'$. (See Figure~\ref{FIG:enlarge2}.)
This implies that we can choose a rational hyperplane 
$H \subseteq \mathbb{R}^m$ containing $\cal A$ and such that $H$
separates ${\PP}$ from ${\PP}'$ and does not contain a point in ${\PP}'$ 
(note that if ${\PP}'$ is empty, then we choose
$H$ containing $\cal A$ and such that ${\PP}$ is contained in one of the two
half-space bounded by $H$). Let $H = \{x \ | \ a'x = b'\}$ and such
that the half-space $a'x \leq b'$ contains ${\PP}$.

\begin{figure}[ht]
\centering \scalebox{0.8}{
\includegraphics{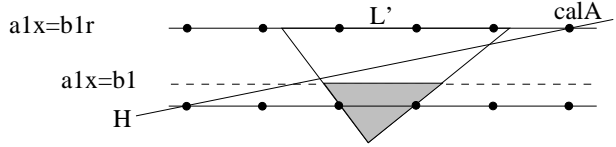}}
\caption{Illustration of the proof of Lemma~\ref{LEM:rotation}. Polytope
${\PP}$ is shaded, polytope ${\PP}'$ is a segment.
}
\label{FIG:enlarge2}
\end{figure}

By construction, the hyperplane $\{x \ | \ a'x = b'\}$ contains
integer points from $\cal A$.

Suppose for a contradiction that there exists an integer point
$p \in \tilde{{\PP}} \setminus {\PP}$.
Then $p$ satisfies $a_1x \geq G \cdot \lceil\frac{b_1}{G}\rceil$ and $a'p \leq b'$.
This implies that the segment joining $p$ to any point $\bar p$ in ${\PP}$
intersects the hyperplane $a_1x = G\lceil\frac{b_1}{G}\rceil$
at some point $p'$ (with, possibly, $p' = p$). Then $p' \in {\PP}'$
and, as all points in ${\PP}'$ violate the inequality $a'x \le b'$, we have
$p' \notin \tilde{{\PP}}$. This is a contradiction with the fact that
the segment $p \bar p$ is contained in $\tilde{{\PP}}$.
\end{proof}

We next make a couple of observations that are used in the remainder
of the paper. The proof of the first observation can be found in
Eisenbrand and Schulz~\cite{EisSchu}.


\begin{observation}
\label{OBS:unimodular}
Let $\mathcal{M}$ be a unimodular
transformation of $\mathbb{R}^m$. Then integer points are mapped to
integer points and $(\pi, \pi_0)$ is a split if and only
if it is mapped to a split.
\end{observation}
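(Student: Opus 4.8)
Recall that a unimodular transformation of $\mathbb{R}^m$ is an affine map $\mathcal{M}(x) = Ux + v$, where $U$ is an integral $m\times m$ matrix with $|\det U| = 1$ and $v \in \mathbb{Z}^m$. The plan is to first show that $\mathcal{M}$ restricts to a bijection of $\mathbb{Z}^m$ onto itself, and then to invoke the characterization of splits in terms of pairs of parallel rational hyperplanes recalled in Section~\ref{SEC:split}.

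For the first assertion, since $U$ and $v$ are integral, $\mathcal{M}(\mathbb{Z}^m) \subseteq \mathbb{Z}^m$; and since $|\det U| = 1$, the adjugate formula for the inverse shows that $U^{-1}$ is again integral, so that $\mathcal{M}^{-1}(y) = U^{-1}(y-v)$ is also a unimodular transformation and hence $\mathcal{M}^{-1}(\mathbb{Z}^m) \subseteq \mathbb{Z}^m$. The two inclusions give $\mathcal{M}(\mathbb{Z}^m) = \mathbb{Z}^m$. This also shows that $\mathcal{M}$ and $\mathcal{M}^{-1}$ play symmetric roles, so for the second assertion it will suffice to prove that the image of a split under $\mathcal{M}$ is a split.

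So let $(\pi, \pi_0)$ be a split, with boundary hyperplanes $H^i = \{x \st \pi x = \pi_0 + i - 1\}$ for $i = 1,2$. Being an invertible affine map, $\mathcal{M}$ sends each $H^i$ to an affine hyperplane $\mathcal{M}(H^i)$ and preserves parallelism, so $\mathcal{M}(H^1)$ and $\mathcal{M}(H^2)$ are parallel rational hyperplanes. Since each $H^i$ contains an integer point and $\mathcal{M}(\mathbb{Z}^m) = \mathbb{Z}^m$, so does each $\mathcal{M}(H^i)$. Finally, the affine bijection $\mathcal{M}$ maps the open slab bounded by $H^1, H^2$ onto the open slab bounded by $\mathcal{M}(H^1), \mathcal{M}(H^2)$ and maps $\mathbb{Z}^m$ bijectively onto itself, so, since no integer point lies strictly between $H^1$ and $H^2$, none lies strictly between $\mathcal{M}(H^1)$ and $\mathcal{M}(H^2)$. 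By the converse direction of the characterization recalled in Section~\ref{SEC:split}, the pair $\big(\mathcal{M}(H^1), \mathcal{M}(H^2)\big)$ defines a valid split. Using $\mathcal{M}^{-1}$ in place of $\mathcal{M}$ gives the reverse implication, which completes the proof.

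The argument is essentially routine; the only step deserving care is the integrality of $U^{-1}$, equivalently the bijectivity of $\mathcal{M}$ on $\mathbb{Z}^m$, on which everything else rests. If one prefers an explicit description of the image split rather than an appeal to the Section~\ref{SEC:split} characterization, one may instead set $\pi' := \pi U^{-1}$ and $\pi'_0 := \pi_0 + \pi U^{-1} v$ and check directly that $\pi'$ is integral, that its entries are relatively prime (because the row map $\pi \mapsto \pi U^{-1}$ maps $\mathbb{Z}^m$ bijectively onto itself and hence carries primitive vectors to primitive vectors), that $\pi'_0 \in \mathbb{Z}$, and that $\mathcal{M}(H^i) = \{y \st \pi' y = \pi'_0 + i - 1\}$, so that the image of $(\pi,\pi_0)$ is precisely the split $(\pi',\pi'_0)$.
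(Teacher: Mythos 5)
Your argument is correct, and it is worth noting that the paper itself gives no proof of this observation: it simply refers the reader to Eisenbrand and Schulz, so your self-contained derivation is a legitimate substitute rather than a retracing of the paper's steps. Your route fits naturally into the paper's framework, since Section~\ref{SEC:split} already records the Bézout-based characterization of splits by pairs of parallel rational hyperplanes through integer points with no integer point strictly between them, and the only real content is the integrality of $U^{-1}$, which you establish. One small caveat: the hyperplane-based version of your argument tacitly assumes that both boundary hyperplanes $\pi x = \pi_0$ and $\pi x = \pi_0+1$ contain integer points, which holds when $\pi$ is primitive (by Bézout) but can fail for a general integral $\pi$, e.g.\ $\pi = (2,0,\ldots,0)$, $\pi_0 = 0$, which the paper's definition of a split does not exclude. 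This does not damage the proof, because the explicit computation you give at the end --- $\pi' = \pi U^{-1}$, $\pi'_0 = \pi_0 + \pi U^{-1} v$, both integral, with $\mathcal{M}$ mapping the disjunction for $(\pi,\pi_0)$ onto the disjunction for $(\pi',\pi'_0)$ --- covers every integral $(\pi,\pi_0)$ directly and is the cleaner way to state the proof; I would lead with that computation and use the hyperplane characterization only as a remark for the primitive case.
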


\begin{observation}
\label{OBS:2-hyperplane}
Let ${\PP}$ and ${\PP}'$ be two polytopes having the same dimension,
such that ${\PP} \subseteq {\PP}'$ and ${\PP}_I = {\PP}'_I$.
If ${\PP}'$ has the 2-hyperplane property, then so does ${\PP}$.
\end{observation}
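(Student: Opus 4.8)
The plan is to work face-by-face, exploiting that $\PP_I = \PP'_I$ forces the two lattice hulls, and hence their face lattices, to coincide. So a face $F$ of $\PP_I$ is literally the same object as a face of $\PP'_I$, and to check that $\PP$ has the 2-hyperplane property I only need to verify, for an arbitrary face $F$ of $\PP_I$ that is \emph{not} contained in a facet of $\PP$, that $F$ is 2-partitionable. Since $\PP'$ is assumed to have the 2-hyperplane property, it would suffice to show that such an $F$ is also not contained in a facet of $\PP'$; then 2-partitionability of $F$ is immediate from the hypothesis on $\PP'$.

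Thus the heart of the argument is the implication: if a face $F$ of $\PP_I$ is contained in a facet of $\PP'$, then it is contained in a facet of $\PP$. To prove this, I would assume $F \subseteq G'$ for a facet $G'$ of $\PP'$ and pick a hyperplane $H = \{x \st c^\top x = \delta\}$ with $\PP' \subseteq \{x \st c^\top x \le \delta\}$ and $G' = \PP' \cap H$, chosen so that $\aff(\PP')$ is not contained in $H$; this choice is possible precisely because $G'$ has dimension $\dim(\PP') - 1$ rather than $\dim(\PP')$. Since $\PP \subseteq \PP'$, the inequality $c^\top x \le \delta$ is valid for $\PP$ as well, so $\PP \cap H$ is a face of $\PP$, and from $F \subseteq G' = \PP' \cap H$ together with $F \subseteq \PP_I \subseteq \PP$ we get $F \subseteq \PP \cap H$.

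It then remains to observe that $\PP \cap H$ is a \emph{proper} face of $\PP$, so that (every proper face of a polytope lying inside some facet) $F$ is contained in a facet of $\PP$, giving the desired conclusion. This is the one step that genuinely uses the hypothesis that $\PP$ and $\PP'$ have the same dimension: combined with $\PP \subseteq \PP'$ it yields $\aff(\PP) = \aff(\PP')$, and since $\aff(\PP') \not\subseteq H$ we obtain $\aff(\PP) \not\subseteq H$, hence $\PP \not\subseteq H$ and $\PP \cap H \subsetneq \PP$.

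The argument is short and there is no substantial computational obstacle; the only place that requires care is ruling out the degenerate case $\PP \subseteq H$. Without the equal-dimension hypothesis this could actually occur — $\PP$ could be flat inside the supporting hyperplane of a facet of the larger polytope $\PP'$ — and then a face $F$ lying in that facet of $\PP'$ need not lie in any facet of $\PP$, so the implication (and the statement) would break down. Everything else is a routine use of the face structure of polytopes and of the definitions of ``2-partitionable'' and ``2-hyperplane property''.
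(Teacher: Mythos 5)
Your proposal is correct and follows essentially the same route as the paper: the paper's proof is exactly the observation that the intersection of any facet of $\PP'$ with $\PP$ is contained in a facet of $\PP$ (so a face of $\PP_I=\PP'_I$ not contained in a facet of $\PP$ is not contained in a facet of $\PP'$, and 2-partitionability follows from the hypothesis on $\PP'$). You simply spell out the supporting-hyperplane details and the role of the equal-dimension assumption, which the paper leaves implicit.
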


\begin{proof}
Consider any face $F$ of ${\PP}_I$ that is not contained in a facet of
${\PP}$. Face $F$ is not contained in a facet of
${\PP}'$, as the intersection of any facet of ${\PP}'$ with ${\PP}$ is
contained in a facet of ${\PP}$. Since ${\PP}'$ has the 2-hyperplane
property, $F$ is 2-partitionable.
\end{proof}

So far, we essentially dealt with full-dimensional lattice-free polytopes.
For the remainder of the paper, we need to extend a couple of definitions
to non-full dimensional polytopes. Let ${\PP}$ be a convex set in $\mathbb{R}^m$.
We say that ${\PP}$ is {\em lattice-free in its affine hull} if ${\PP}$ does
not contain an integer point in its relative interior. Note that when ${\PP}$
is full-dimensional, this definition is equivalent to ${\PP}$ being lattice-free.

Recall the definition of the  split $(\pi(F), \pi_0(F))$ given at the
beginning of Section~\ref{SEC:ratnonmax} for a facet $F$ of a
full-dimensional rational lattice-free polytope ${\PP} \subset \mathbb{R}^m$.
When ${\PP}$ is not full-dimensional, we consider the same definition
restricted to the affine hull $A$ of ${\PP}$. This split is uniquely
defined in $A$ and it will be denoted by $(\pi^A(F), \pi_0^A(F))$.
Note that the split $(\pi^A(F), \pi_0^A(F))$ can be extended
(in a non-unique way) to a split $(\pi(F), \pi_0(F))$ of $\mathbb{R}^m$.

\begin{theorem}
\label{LEM:enlarge}
Let ${\PP}$ be a rational polytope in $\mathbb{R}^m$ that is lattice-free in its
affine hull $A$. Assume that $\dim (A) \geq 1$ and that $A$
contains integer points.
Then it is possible to enlarge ${\PP}$ to a
rational lattice-free polytope ${\PP}' \subseteq A$ such that
${\PP}$ has the 2-hyperplane property if and only if ${\PP}'$ does, and such
that for each facet $F$ of ${\PP}'$
\begin{itemize}
\item[(i)] the affine hull of $F$ contains integer points;

\item[(ii)] the split $(\pi^A(F), \pi_0^A(F))$ is ${\PP}'$-intersecting.
\end{itemize}
\end{theorem}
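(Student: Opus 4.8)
The plan is to first reduce to the case where $A = \mathbb{R}^m$ (the full-dimensional case): since $A$ contains integer points and is rational, there is a unimodular change of coordinates (Observation~\ref{OBS:unimodular}) identifying the lattice $\mathbb{Z}^m \cap A$ with $\mathbb{Z}^{m'}$ where $m' = \dim(A)$, and both the 2-hyperplane property and the notion of $\PP'$-intersecting split (relative to $A$) are preserved. So assume $\PP \subset \mathbb{R}^m$ is full-dimensional and lattice-free. Write $\PP = \{x \mid Ax \le b\}$ with $A$ integral. The strategy is to repeatedly apply Lemma~\ref{LEM:rotation}: that lemma takes a facet-defining inequality $a_1 x \le b_1$ whose affine hull contains no integer point and replaces it by a rational inequality $a'x \le b'$ whose hyperplane does contain integer points, enlarging $\PP$ while keeping $\PP_I$ unchanged. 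Iterating over all such facets, I obtain a polytope $\PP''$ with $\PP \subseteq \PP''$, $\PP_I = \PP''_I$, and the affine hull of every facet of $\PP''$ containing integer points; this is condition (i). By Observation~\ref{OBS:2-hyperplane} (applied in both directions, using $\PP_I = \PP''_I$), $\PP$ has the 2-hyperplane property iff $\PP''$ does. Some care is needed because applying Lemma~\ref{LEM:rotation} to one facet may alter the facet structure (a new facet might appear, or an old one disappear); I would argue that since each step strictly enlarges the polytope while fixing the finite set $\PP_I$, and each newly introduced facet has a hyperplane through an integer point, the process terminates — the facets whose hyperplanes miss all integer points can only decrease in a suitable sense, or one can bound the total number of iterations by noting $\PP''$ is squeezed between $\PP$ and a fixed bounding box determined by $\PP_I$.

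The second task is condition (ii): to make $(\pi^A(F),\pi_0^A(F))$ $\PP'$-intersecting for each facet $F$. Recall from the end of Section~\ref{SEC:split} that for a facet $F$ of $\PP'$, the split $(\pi(F),\pi_0(F))$ has boundary hyperplanes $H^1$ parallel to $F$ (and $H^1$ supports $F$ once $\aff(F)$ contains an integer point, which we now have) and $H^2$ the next parallel lattice hyperplane on the far side, with some points of $\PP'$ strictly between $F$ and $H^2$. The split is $\PP'$-intersecting precisely when $H^2$ also meets $\PP'$, i.e., when $\PP'$ is "thick enough" in the direction $\pi(F)$ that it reaches from the lattice hyperplane through $F$ to the next one. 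So after achieving (i), I would further enlarge $\PP''$: for each facet $F$ whose split fails to be intersecting, $\PP''$ lies strictly between $H^1$ and $H^2$ near $F$, so I can push the facet $F$ slightly outward — but I must keep the new polytope lattice-free and keep $\PP''_I$ unchanged. The key point is that $H^1$ and $H^2$ are consecutive lattice hyperplanes, so the open slab strictly between them contains no integer points; thus I may translate $\aff(F)$ outward to coincide with $H^2$ itself... no — that would put integer points of $H^2$ into the interior. Instead I translate $\aff(F)$ to any rational hyperplane strictly between its current position and $H^2$ but not equal to $H^2$; this keeps the polytope lattice-free (no integer points in the open slab) and keeps $\PP''_I$ unchanged, while now $\PP'$ does reach arbitrarily close to $H^2$, in particular $H^2 \cap \PP' \ne \emptyset$ fails still...

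Let me restate (ii) more carefully. I want $H^2 \cap \PP' \ne \emptyset$. Since after (i) we have $H^1 = \aff(F)$, and the slab strictly between $H^1$ and $H^2$ is integer-point-free, I can replace the facet inequality for $F$ by the inequality whose hyperplane is $H^2$ itself: this enlarges $\PP''$, and because the only integer points gained would lie on $H^2$ (on the boundary, not the interior) or beyond it, I must simultaneously re-apply the rotation argument to cut those off, or — more cleanly — observe that it suffices to enlarge $\PP''$ so that its intersection with $H^2$ is nonempty, which I can do by choosing the new facet hyperplane to pass through a chosen vertex $v$ of $\PP''$ on $F$ and be tilted so as to touch $H^2$ at a single point while staying lattice-free; lattice-freeness is preserved because I stay within the union of the closed slab and the existing lattice-free region, adding no interior integer point. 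I would carry this out facet by facet, handling newly created facets as before, and check that (i) is maintained (any new facet hyperplane is chosen rational, and I verify it can be taken through an integer point, or re-invoke Lemma~\ref{LEM:rotation}). Finally, $\PP'_I = \PP''_I = \PP_I$ throughout, so Observation~\ref{OBS:2-hyperplane} gives the 2-hyperplane equivalence, completing the proof. The main obstacle is exactly this interplay between (i) and (ii): enlarging a facet to make its split intersecting can destroy property (i) for that facet or create new facets violating (i), and ensuring the whole iterative enlargement terminates with both properties simultaneously — while never introducing an interior integer point — is the delicate bookkeeping at the heart of the argument.
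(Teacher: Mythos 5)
Your reduction to the full-dimensional case and your iterated use of Lemma~\ref{LEM:rotation} for the facets whose affine hulls miss the lattice reproduce the paper's second phase and are essentially fine for condition (i) (modulo the termination bookkeeping, which the paper settles by noting each application strictly decreases the number of offending facets). The genuine gap is condition (ii). Note what failure of (ii) means once (i) holds: for a facet $F$ of ${\PP}'$, the hyperplane $H^1=\aff(F)$ supports ${\PP}'$ and $H^2$ is the next lattice hyperplane on the side of the polytope, so $H^2\cap {\PP}'=\emptyset$ forces \emph{all} of ${\PP}'$ into the slab between $H^1$ and $H^2$, i.e.\ the split is ${\PP}'$-englobing. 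The defect is global, not local to $F$: no translation or tilt of the single facet $F$, which lies on the far side of the polytope from $H^2$, can produce a point of ${\PP}'$ on $H^2$; one must add points on (or across) the opposite slab boundary while preserving lattice-freeness, the integer hull and (i), and one must handle every englobed direction. Your concrete attempts show this: moving $\aff(F)$ to a parallel rational hyperplane strictly inside the slab yields a facet violating both (i) and (ii) (as you notice, $H^2\cap{\PP}'$ stays empty); ``replacing the facet inequality by $H^2$'' is not an enlargement at all, since $H^2$ is on the interior side of $F$ --- and if you instead meant relaxing $F$ by one lattice step outward, that changes ${\PP}_I$ and forfeits the 2-hyperplane equivalence you rely on; and the tilted-facet variant spawns new facets that restart the process, with no termination or non-interference argument, which you yourself flag as the unresolved heart of the matter. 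A secondary gap: Observation~\ref{OBS:2-hyperplane} gives only one direction of the equivalence (enlarged $\Rightarrow$ original), whereas the direction needed downstream (${\PP}$ has the property $\Rightarrow$ ${\PP}'$ does) requires a separate argument.

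The paper closes the gap with an idea absent from your plan, applied in the opposite order. \emph{Before} any rotation, it performs a single global enlargement (its Phase 1): taking an englobing non-intersecting split with boundary hyperplanes $H^1,H^2$, it chooses a unimodular image of the unit hypercube with all vertices on $H^1\cup H^2$ (condition (*) on ${\PP}\cap H^1$ ensures no new integer point is created) and enlarges ${\PP}$ by $2t$ fractional points: the centers of the hypercube facets other than the two on the boundary hyperplanes, together with a fractional point $p^1$ on $H^1$ (taken in ${\PP}\cap H^1$ when possible) and its mirror image $q^1\in H^2$ through the cube's center $\bar v$. A short computation shows $\bar v\in Q_1(\pi,\pi_0)$ for \emph{every} split, which forces every englobing split of the enlarged polytope ${\PP}^1$ to be intersecting --- all directions at once, in one shot. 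This property is monotone under enlargement: if ${\PP}^1\subseteq{\PP}'$, a ${\PP}'$-englobing split is ${\PP}^1$-englobing, hence ${\PP}^1$-intersecting, hence ${\PP}'$-intersecting. Consequently, after the subsequent rotations of Lemma~\ref{LEM:rotation} (which only enlarge), a facet split violating (ii) would be englobing, a contradiction; no facet-by-facet repair and no interleaving with (i) is needed. The 2-hyperplane equivalence is then argued separately in both phases (in Phase 1 via ${\PP}_I\subseteq H^1$, in Phase 2 by showing every facet of ${\PP}^1$ containing integer points is contained in a facet of ${\PP}'$), rather than by invoking Observation~\ref{OBS:2-hyperplane} in both directions.
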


\begin{proof}
The affine space $A$ is a lattice-subspace of dimension $t = \dim(A) \ge 1$
and therefore by choosing a lattice basis to define new coordinates, we may
assume that ${\PP}$ is full-dimensional in $\mathbb{R}^t$. Observe that
if $t = 1$, then ${\PP}$ always has the 2-hyperplane property and the result
holds when ${\PP}'$ is taken as the smallest segment with integer endpoints
containing ${\PP}$. Hence, in the remainder of the proof, we assume $t \geq 2$.
We obtain ${\PP}'$ in two phases.

{\bf Phase 1:} We first exhibit a rational lattice-free polytope
${\PP}^1$ containing ${\PP}$, such that ${\PP} \cap \mathbb{Z}^t = {\PP}^1 \cap
\mathbb{Z}^t $ and every ${\PP}^1$-englobing split is ${\PP}^1$-intersecting.

If ${\PP}$ is such that every ${\PP}$-englobing split is ${\PP}$-intersecting,
then ${\PP}^1$ is trivially taken to be ${\PP}$. Otherwise, consider an
${\PP}$-englobing split that is not ${\PP}$-intersecting and
let $H^1$ and $H^2$ be its two boundary hyperplanes. We assume
without loss of generality that $H^2 \cap {\PP} = \emptyset$. Let $C
\subseteq \mathbb{R}^t$ be the image of the unit hypercube $\bar C$
in $\mathbb{R}^t$ under a unimodular transformation $\mathcal{M}$,
such that all its vertices are in $H^1 \cup H^2$,
and such that the following condition (*) holds.

\begin{enumerate}
\item[(*)] If ${\PP} \cap H^1$ has dimension at least 1, then $C$
is chosen such that $C \cap {\PP} \cap H^1$ has dimension at least 1.
If ${\PP} \cap H^1$ is a single point, then $C$ is chosen such that $C
\cap {\PP} \cap H^1 = {\PP} \cap H^1$.
\end{enumerate}

Note that if  ${\PP} \cap H^1$ is empty, we are free to choose $\mathcal{M}$
as any unimodular transformation mapping the vertices of $\bar C$ to
integer points in $H^1 \cup H^2$.
We now apply the inverse of the unimodular transformation
$\mathcal{M}$ so that $C$ is transformed back into $\bar C$,
${\PP}$ is transformed into a polytope $Q$, and $H^i$ is transformed into
 hyperplane $\bar H^i$ for $i = 1,2$.
Without loss of generality, we can assume that $\bar H^1$ and $\bar
H^2$ are the hyperplanes $\{x \in \mathbb{R}^t \ \ | \ \ x_1 =0\}$ and
$\{x \in \mathbb{R}^t \ | \ x_1 = 1\}$. Observation~\ref{OBS:unimodular} shows
that condition (*) still holds for $\bar C$ and $Q$.
Let $\bar v$ be the center of the hypercube
$\bar C$, i.e. $\bar v = (\frac{1}{2}, \ldots, \frac{1}{2})$.
If $Q \cap \bar H^1$ has dimension at least 1, let $p^1$ be a rational fractional
point in $\bar C \cap Q \cap \bar H^1$.
If $Q \cap \bar H^1$ is of dimension less than 1, let $p^1$ be any rational fractional
point in $\bar C \cap \bar H^1$.  Let $q^1$ be the point in $\bar C \cap \bar H^2$
such that $\frac{p^1 + q^1}{2} = \bar v$.
Moreover we consider the set of the $2(t-1)$ centers of the facets of $\bar C$
other than the two supported by $\bar H^1$ and $\bar H^2$, i.e.,

\[
p^i_j = \left\{
\begin{array}{rl} \frac{1}{2} & j \neq i \\ 0 & j= i
\end{array}
\right.
\hspace{1cm}
q^i_j = \left\{ \begin{array}{rl} \frac{1}{2} & j \neq i \\
1 & j= i
\end{array} \right.
\hspace{1cm}
\mbox{for \ } i = 2, \ldots, t,  \ j = 1, \ldots, t.
\]

Let $S$ be the set of $2t$ points $p^i, q^i$ for $i = 1, \ldots, t$ and let
$Q_1 = \conv(S \cup Q)$. Clearly, $Q \subseteq Q_1$ and $Q_1$ is rational.
We now show that $Q \cap \mathbb{Z}^t = Q_1 \cap \mathbb{Z}^t $. Notice
that integer points in $Q_1$ are either on $\bar H^1$ or $\bar H^2$.
Since $Q_1 \cap \bar H^2$ is reduced to a single fractional point $q^1$,
$Q_1 \cap \bar H^2$ does not contain integer points.
When $Q \cap \bar H^1$ has dimension at least 1, our choice of $p^1$ implies
that $Q \cap \bar H^1 = Q_1 \cap \bar H^1$. When $Q \cap \bar H^1$ has dimension less than 1,
our choice of $p^1$ guarantees that the only integer point (if any) in
$Q_1 \cap \bar H^1$ is also in $Q \cap \bar H^1$.
Therefore, in both cases, $Q \cap \mathbb{Z}^t = Q_1 \cap \mathbb{Z}^t $.

We claim that the split closure of $Q_1$ contains the point $\bar v$ defined
above. Since $\bar v$ is an interior point of $Q_1$, this shows that
every $Q_1$-englobing split is $Q_1$-intersecting.

We prove the claim by showing that for any split ($\pi, \pi_0$), we have
$\bar v \in Q_1(\pi, \pi_0)$. This is obvious if $\pi \cdot \bar v \leq \pi_0$
or $\pi \cdot \bar v \geq \pi_0 +1$, so we may assume that $\pi_0 < \pi \cdot \bar v < \pi_0 +1$.
Since $\pi$ is integer and $\bar v$ has all coordinates
equal to $\frac{1}{2}$, we have that $\pi \cdot \bar v = \pi_0 + 0.5$. If
$\pi_j \neq 0$ for some $j \neq 1$, then
\[
\pi\cdot p^j = \pi\cdot \bar v - 0.5 \cdot \pi_j = \pi_0 + 0.5 - 0.5 \cdot \pi_j
\]
\[
\pi\cdot q^j = \pi\cdot \bar v + 0.5 \cdot \pi_j = \pi_0 + 0.5 + 0.5 \cdot \pi_j \ .
\]

The integrality of $\pi$ implies that $|\pi_j| \geq 1$, and thus
$p^j \in Q_1(\pi, \pi_0)$ and $q^j \in Q_1(\pi, \pi_0)$ and
$\bar v = \frac{p^j+q^j}{2} \in Q_1(\pi,\pi_0)$. On
the other hand, if $\pi_j = 0$ for all $j\neq 1$, then the
split disjunction must be $\{x_1 \leq 0 \vee x_1 \geq 1\}$ (since
$\pi_0 < \pi \cdot \bar v < \pi_0 +1$). But then,
$p^1 \in Q_1(\pi, \pi_0)$ and $q^1 \in Q_1(\pi, \pi_0)$ and
$\bar v = \frac{p^1+q^1}{2} \in Q_1(\pi,\pi_0)$. This completes the
proof of the claim.

Let ${\PP}^1 := \mathcal{M}(Q_1)$. Since $\mathcal{M}$
is a one-to-one map of splits into splits by Observation~\ref{OBS:unimodular},
any ${\PP}^1$-englobing split is ${\PP}^1$-intersecting.

If ${\PP} = {\PP}^1$ then, trivially, ${\PP}$ has the 2-hyperplane property if
and only if ${\PP}^1$ does. Otherwise, consider the ${\PP}$-englobing
split defined by $H^1$, $H^2$. Recall that this split is not
${\PP}$-intersecting, therefore ${\PP}_I$ is contained in $H^1$ and as
$H^1$ defines a face of ${\PP}$ and a face of ${\PP}^1$, both ${\PP}$ and ${\PP}^1$
have the 2-hyperplane property. Thus, ${\PP}$ has the $2$-hyperplane
property if and only if ${\PP}^1$ does.

{\bf Phase 2:} At the end of Phase 1, we obtain ${\PP}^1$ such that any
${\PP}^1$-englobing split is ${\PP}^1$-intersecting and ${\PP} \cap \mathbb{Z}^t
= {\PP}^1 \cap \mathbb{Z}^t$. Let ${\PP}' := {\PP}^1$ and apply the following
algorithm to ${\PP}'$.

\begin{enumerate}
\item If there exists a facet $F$ of ${\PP}'$ such that the affine hull
$A^F$ of $F$ does not contain integer points, do step 2 below.
Otherwise stop.

\item Using Lemma~\ref{LEM:rotation}, enlarge
${\PP}'$ to a polytope ${\PP}''$. Observe that the integer points in ${\PP}''$
are exactly those in ${\PP}'$ and that, compared to ${\PP}'$, ${\PP}''$ has
fewer facets whose affine hull does not contain integer points.
Rename ${\PP}' := {\PP}''$ and go to step 1.
\end{enumerate}

Since ${\PP}^1$ has a finite number of facets, the above algorithm
terminates. The polytope at the end of the algorithm satisfies the
statement of the lemma. Indeed, first notice that ${\PP}^1 \subseteq
{\PP}'$, where ${\PP}^1$ is the polytope obtained at the end of Phase 1.
Second, by construction, every facet $F$ of ${\PP}'$ satisfies
condition (i) in the statement of the lemma. Moreover, if
$(\pi(F), \pi_0(F))$ is not ${\PP}'$-intersecting, then it is 
${\PP}'$-englobing. As ${\PP}^1 \subseteq {\PP}'$, this split is also 
${\PP}^1$-englobing. By construction of ${\PP}^1$, it is 
${\PP}^1$-intersecting and thus also ${\PP}'$-intersecting,
a contradiction. Therefore (ii) holds.

Finally, we show that ${\PP}^1$ satisfies the 2-hyperplane property if
and only if ${\PP}'$ does. As ${\PP}^1 \subseteq {\PP}'$, ${\PP}^1_I = {\PP}'_I$ and $\dim({\PP}^1) = \dim({\PP}')$,
Observation~\ref{OBS:2-hyperplane} shows one of the two
implications. For the converse, suppose that ${\PP}^1$ has the
2-hyperplane property. Observe that any facet of ${\PP}^1$ containing
integer points is contained in a facet of ${\PP}'$. This shows that if $F$ is a
face of ${\PP}'_I$ that is not contained in a facet of ${\PP}'$, then $F$ is
not contained in a facet of ${\PP}^1$ and is thus 2-partitionable. It
follows that ${\PP}'$ has the 2-hyperplane property.
 \end{proof}

\subsection{Chv\'atal splits.}
\label{SEC:CHVATAL}

Given a polytope $Q$ in $\mathbb{R}^m$, a {\em Chv\'atal split}
is a split $(\pi, \pi_0)$ such that
$Q \cap \{ x \in \mathbb{R}^m \ | \ \pi x \geq \pi_0+1 \} = \emptyset$.
Note that $Q_I \subset Q \cap \{ x \in \mathbb{R}^m \ | \ \pi x \leq \pi_0 \}$.

The goal of this section is to prove a technical lemma about Chv\'atal splits.
The statement of the lemma is illustrated in Figure~\ref{FIG:swiping}.

\begin{figure}[ht]
\centering \scalebox{0.8}{
\includegraphics{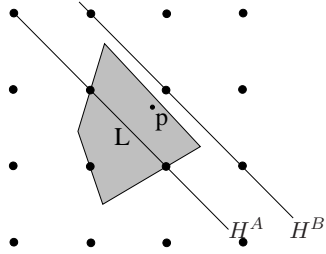}}
\caption{Illustration for the statement of Lemma~\ref{LEM:swiping} with
$m = 2$. Polytope $Q$ is shaded and polytope ${\PP}$ is a segment.}
\label{FIG:swiping}
\end{figure}

\begin{lemma}
\label{LEM:swiping}
Let $Q$ be a full-dimensional rational
lattice-free polytope in $\mathbb{R}^m$ with $m \ge 2$. Let $(\pi,
\pi_0)$ be a Chv\'atal split and let $H^A := \{ x\ | \  \pi x = \pi_0
\}$ and $H^B := \{ x\ | \  \pi x = \pi_0+1 \}$ be its boundary
hyperplanes, with $H^B \cap Q$ empty. Assume that ${\PP} := H^A \cap Q$
has dimension $\dim({\PP}) = m-1$, and for each facet $F$ of ${\PP}$ the
affine hull of $F$ contains integer points and the split
$(\pi^{H^A}(F), \pi_0^{H^A}(F))$ is ${\PP}$-intersecting. Let $p$ be any
point strictly between $H^A$ and $H^B$. Then there exists a finite
sequence $S$ of ${\PP}$-intersecting splits such that $Q(S) \cap \{ x
\in \mathbb{R}^m\ | \ \pi x \geq \pi_0 \}$ is contained in the
pyramid $\conv(p \cup {\PP})$.
\end{lemma}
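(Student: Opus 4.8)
The plan is to normalize coordinates, reduce the claim to a one‑facet‑at‑a‑time statement, and then grind the protruding part of $Q$ into the pyramid by a sequence of tilted $\PP$‑intersecting splits — the ``swiping''.

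\emph{Normalization and reduction.} I would first pick a unimodular change of coordinates so that $\pi x = x_m$ and $\pi_0 = 0$; then $H^A=\{x_m=0\}$, $H^B=\{x_m=1\}$, $Q\subseteq\{x_m<1\}$ (as $(\pi,\pi_0)$ is Chv\'atal and $H^B\cap Q=\emptyset$), $\PP=Q\cap\{x_m=0\}$ has dimension $m-1$, and the object to be controlled is the ``cap'' $C(Q):=Q\cap\{x_m\ge0\}$, of which $\PP$ is a facet (the case $C(Q)=\PP$ being trivial). Replacing $p$ by a rational point of $\inte\bigl(\conv(p\cup\PP)\bigr)$ only shrinks the target pyramid, so I may assume $p$ rational. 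A pyramid over $\PP$ has exactly the facets $\PP$ and $\conv(p\cup F)$, $F$ a facet of $\PP$, so
\[
\conv(p\cup\PP)=\{x_m\ge0\}\cap\bigcap_{F}H_F,
\]
where $H_F$ is the closed half‑space bounded by $\aff\bigl(\conv(p\cup F)\bigr)$ that contains $\PP$. Since each split satisfies $K(\pi',\pi'_0)\subseteq K$ and is monotone in $K$, it is enough to produce, for every facet $F$ of $\PP$, a finite sequence $S_F$ of $\PP$‑intersecting splits with $C\bigl(Q(S_F)\bigr)\subseteq H_F$: concatenating all the $S_F$ then yields $S$ with $C(Q(S))\subseteq\{x_m\ge0\}\cap\bigcap_F H_F=\conv(p\cup\PP)$. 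So fix one facet $F$.

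\emph{Available splits.} Write $\aff(F)=\{x_m=0,\ \sigma'x'=c\}$ with $\sigma'$ primitive integral and $c\in\mathbb Z$ (hypothesis (i) gives an integer point on $\aff(F)$) and $\PP\subseteq\{\sigma'x'\le c\}$; hypothesis (ii) says the split $s_F=\bigl(\{\sigma'x'\le c-1\}\vee\{\sigma'x'\ge c\}\bigr)$ inside $H^A$ is $\PP$‑intersecting. For every $t\in\mathbb Z$ the disjunction $\hat s^{\,t}_F=\bigl(\{\sigma'x'-tx_m\le c-1\}\vee\{\sigma'x'-tx_m\ge c\}\bigr)$ is a genuine split of $\mathbb R^m$ (no integer point lies strictly between its boundary hyperplanes, as $\sigma'z'-tz_m\in\mathbb Z$ for integral $z$), it restricts to $s_F$ on $H^A$, and it is $\PP$‑intersecting (a boundary hyperplane meets $\PP\subseteq H^A$ iff its trace on $H^A$ does, which holds). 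The same construction gives families $\hat s^{\,t}_{F'}$ for the remaining facets $F'$; as $|t|\to\infty$ the boundary hyperplanes of $\hat s^{\,t}_{F'}$ rotate, around the fixed axis $\aff(F')$, toward being parallel to $H^A$. Note that $\aff\bigl(\conv(p\cup F)\bigr)$ is a fixed hyperplane through $\aff(F)$ whose ``slope'' need not be integral, so it is in general not the boundary hyperplane of any $\hat s^{\,t}_F$.

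\emph{The swiping and the main obstacle.} To bring $C(Q)$ below $\aff\bigl(\conv(p\cup F)\bigr)$ I would apply heavily tilted splits $\hat s^{\,t}_{F'}$, for the facet(s) $F'$ ``facing'' $F$, choosing the tilt $t$ so that the boundary hyperplane that becomes the boundary of $K^{\le}$ in $\conv(K^{\le}\cup K^{\ge})$ passes just above the portion of the current cap protruding past $\aff\bigl(\conv(p\cup F)\bigr)$; taking the convex hull then flattens that protrusion onto this nearly‑right hyperplane, reducing it. The hard part will be to show this process terminates: a single application does not finish the job, because the convex‑hull step re‑creates cap material at intermediate heights, and iterating one fixed split eventually does nothing. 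The obstacle is to set up the right potential — roughly, for each height $h\in(0,1)$, how far the cap at height $h$ sticks out past $\aff\bigl(\conv(p\cup F)\bigr)$ — and to prove one well‑chosen tilted split decreases it by a controlled amount. I expect the fixed gap $1-\max_Q x_m>0$ between $Q$ and $H^B$, together with the nondegeneracy of the base split $s_{F'}$ furnished by hypothesis (ii), to supply this decrease (possibly extracted via induction on $m$, by reading off from the slice $K\cap\{\sigma'_{F'}x'-tx_m=c_{F'}\}$ a lower‑dimensional instance of the same statement); increasing $t$ as the protrusion concentrates near $H^A$, finitely many tilted splits then drive the protrusion to zero, giving $C\bigl(Q(S_F)\bigr)\subseteq H_F$ and completing the proof.
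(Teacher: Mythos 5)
Your normalization, the facet-by-facet reduction, and the family of tilted splits $\hat s^{\,t}_F$ are all sound; in fact $\hat s^{\,t}_F$ is exactly the family the paper constructs (its hyperplanes $C^{0,k}$, $C^{1,k+1}$ through $\aff(F)$ and its integral translates in $H^A$ and $H^B$). But the proposal stops where the lemma's real content lies: you explicitly do not have the argument that the swiping terminates, and the substitute you sketch --- a quantitative potential measuring the protrusion past $\aff(\conv(p\cup F))$, a controlled decrease extracted from the gap $1-\max_Q x_m$, possibly an induction on $m$, applied to tilted splits of facets ``facing'' $F$ --- is not the mechanism that works and is not carried out. That is a genuine gap. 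It is also aiming at the wrong per-facet target: $\aff(\conv(p\cup F))$ is in general not an integral hyperplane, so no sequence of splits will push the cap exactly up to it; what one can and should do is separate the cap from $p$ by an \emph{integral} tilted hyperplane through $\aff(F)$, and one checks easily that the corresponding half-space, intersected with $\{\pi x\ge\pi_0\}$, is contained in your $H_F$, so this weaker per-facet goal still yields $\conv(p\cup\PP)$ after intersecting over all facets.

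The missing idea is combinatorial, not quantitative. For the fixed facet $F$, apply the splits of $F$ itself, $\hat s^{\,k}_F$, \emph{consecutively} for $k=\ell,\ell+1,\dots,t$: start at the extreme tilt $\ell$ for which the boundary hyperplane through $\aff(F)$ already misses the interior of the cap (it exists because $Q$ is bounded), and stop at the tilt $t$ just past $p$ (finite because $p$ is strictly between $H^A$ and $H^B$). Prove by induction on $k$ the invariant that, after applying the tilt-$k$ split, the current polytope lies on one side of the tilt-$(k+1)$ hyperplane through $\aff(F)$ and meets it only in $F$. The induction step is immediate from Observation 2.1(ii): a point of the new polytope strictly inside the tilt-$k$ strip is a convex combination of a point on the hyperplane through $\aff(F)$ --- which, by the invariant, lies in $F$ if it is on or above $H^A$, and otherwise is strictly on the near side of the tilt-$(k+1)$ hyperplane --- and a point on the parallel hyperplane through the adjacent lattice subspace, which is strictly below $H^B$ (here $Q\cap H^B=\emptyset$ is used, qualitatively, not as an $\epsilon$) and hence also strictly on the near side. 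Such combinations cannot cross the tilt-$(k+1)$ hyperplane except inside $F$. Thus each split advances a supporting tilted hyperplane by exactly one lattice step; the ``re-created cap material at intermediate heights'' you worry about is controlled exactly, no potential function or induction on dimension is needed, and after the run the cap and $p$ are separated by an integral hyperplane through $\aff(F)$. Concatenating over the facets, as in your reduction, then completes the proof.
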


\begin{proof}
Let $F$ be a facet of ${\PP}$. By hypothesis, there exists a split
$(\pi^F, \pi^F_0)$ with boundary hyperplanes $H^{F}_0$ and $H^{F}_1$
such that $H^{F}_0 \cap H^A$ contains $F$ and $H^{F}_1 \cap {\PP} \neq
\emptyset$. All the integer points in $\mathbb{R}^m$ can be
partitioned on equally spaced hyperplanes parallel to $H^A$, and the
integer points in any of these hyperplanes can themselves be
partitioned on equally spaced affine subspaces parallel to $H^{F}_0
\cap H^A$. Let $A^k$ with $k \in \mathbb{Z}$ denote these affine
subspaces in $H^A$, where $A^0 := H^{F}_0 \cap H^A$, $A^1 := H^{F}_1
\cap H^A$, and  $A^j$ is between $A^i$ and $A^k$ if and only if $i <
j < k$. Let $B^0$ be an affine subspace in $H^B$ parallel to $A^0$.
Let $a_0 \in A^0$, $b_0 \in B^0$ and let $v$ be the vector $b_0 - a_0$. Define
$B^k = A^k + v$ with $k \in \mathbb{Z}$ to be the translate of $A^k$.
 (See an illustration in Figure~\ref{FIG:swipingpf}.)

\begin{figure}[ht]
\centering \scalebox{0.8}{
\includegraphics{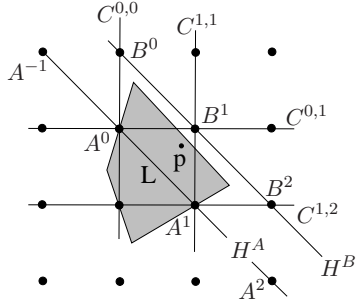}}
\caption{Illustration for the proof of Lemma~\ref{LEM:swiping} with
$m = 2$, $\ell = 0$ and $t = 1$.
Polytope $Q$ is shaded.}
\label{FIG:swipingpf}
\end{figure}

For all $i, j \in \mathbb{Z}$, define $C^{i,j}$ as the hyperplane containing
$A^i \cup B^j$.
For all $k \in \mathbb{Z}$, we have that $C^{0,k}$ and $C^{1, k+1}$
are the boundary hyperplanes of a split $(\pi^{k}, \pi^k_0)$.
If $Q \subseteq \{ x \in \mathbb{R}^m\ | \ \pi x \leq \pi_0 \}$,
then the lemma holds. So we assume that
$\inte \left( \overline{Q(\pi, \pi_0)}\right) \not= \emptyset$.
Let $\ell$ be the largest index $k$
such that $C^{0, k} \cap \inte \left( \overline{Q(\pi, \pi_0)}\right) =
\emptyset$. Let $t$ be the finite index such that
$p \in \conv(A^0 \cup B^{t}) \cup \inte \left(\conv(A^0 \cup
B^{t} \cup B^{t+1})\right)$. Consider the sequence of
${\PP}$-intersecting splits $(\pi^\ell$, $\pi^\ell_0)$, $\ldots$,
$(\pi^t$, $\pi^t_0)$. Let  $Q^k$ for $k = \ell, \ldots, t$ be the
polytopes obtained by applying this sequence of splits to $Q$. We
claim that $p \not\in Q^t$ and that, in fact, $p$ and
$\overline{Q^{t}(\pi, \pi_0)}$ are on opposite sides of the
hyperplane $C^{0,t+1}$. If the claim is correct,  applying the same
reasoning to each facet of ${\PP}$ in succession yields the lemma.

Let $U^k := \overline{Q^{k}(\pi, \pi_0)}$ for $k = \ell, \ldots, t$ and
let $\bar H^B$ be the half-space bounded by $H^B$ containing $H^A$.
We prove the claim by induction on $k$
by showing that $Q^{k}$ is contained on one side of $C^{0,k+1}$ and that
$C^{0,k+1} \cap U^k = F$ for $k = \ell, \ldots, t$.

For $k = \ell$, observe that $C^{0,\ell} \cap \overline{Q(\pi, \pi_0)} = F$
and thus
$C^{0, \ell} \cap Q$ is contained on one side of $C^{0,\ell+1}$.
Observe also that $C^{1, \ell+1} \cap Q$ is contained in the interior
of $\bar H^B$
and thus it is on the same side of $C^{0,\ell+1}$ as $C^{0, \ell} \cap Q$. It follows that
$Q^{\ell}$ is contained on one side of $C^{0,\ell+1}$ and
that the only points of $U^{\ell}$ on $C^{0,\ell+1}$ are points in $F$.
This proves the claim for $k = \ell$.

Suppose now that $k > \ell$ and that the induction hypothesis is true for
$k-1$. Observe that $C^{0, k} \cap U^{k-1} = F$ and that
$C^{1, k+1} \cap Q^{k-1}$ is contained  in the interior of $\bar H^B$
and thus is on the same side of $C^{0,k+1}$ as $C^{0, k} \cap Q^{k-1}$. It follows that
$Q^{k}$ is contained on one side of $C^{0,k+1}$ and
that the only points of $U^k$ on $C^{0,k+1}$ are points in $F$.
This proves the claim for $k > \ell$.
\end{proof}

\subsection{Proof of the main theorem.}
\label{SEC:MAIN}

Consider a full-dimensional polytope ${\PP} \subset \mathbb{R}^m$.
By a theorem of Chv\'atal~\cite{CCPS}, there exists a
finite sequence of Chv\'atal splits $S := ((\pi^1, \pi^1_0)$, $\ldots$,
$(\pi^c, \pi^c_0))$ such that, when
applying the sequence $S$ to ${\PP}$, the last polytope ${\PP}^c$ is the convex
hull ${\PP}_I$ of the integer points in ${\PP}$. If ${\PP} = {\PP}_I$, we define $c :=0$.
Otherwise, if ${\PP} \not= {\PP}_I$, define  ${\PP}^0 := {\PP}$ and,
for $j = 1, \ldots, c$, let ${\PP}^j$ be the sequence of polytopes
obtained from $S$. We may assume that ${\PP}^{j} \not= {\PP}^{j-1}$ for $j =
1, \ldots , c$. If ${\PP}_I$ is not full-dimensional, we set $t$ to be
the smallest index in $\{0, 1, \ldots, c-1\}$ such that $(\pi^{t+1},
\pi^{t+1}_0)$ is ${\PP}^t$-englobing. If no such englobing split exists 
in the sequence, we set $t := c$. It
follows that ${\PP}^j$ is full-dimensional for $j=0, \ldots, t$.
We say that ${\PP}$ has {\em Chv\'atal-index $t$} if $t$ is the smallest
possible value over all possible sequences of Chv\'atal splits as described
above.

\begin{lemma}
\label{OBS:englobing-split}
If ${\PP} \subset \mathbb{R}^m$ is a full-dimensional polytope with the $2$-hyperplane property and
Chv\'atal-index $t$, then there exists a sequence $S$ of $t$
Chv\'atal splits together with a split $(\pi^{t+1}, \pi^{t+1}_0)$ that is
${\PP}^t$-englobing.
\end{lemma}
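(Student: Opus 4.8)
\emph{Proof plan.} The plan is to read most of the statement straight off the definition of the Chv\'atal-index, invoking the $2$-hyperplane property only in one boundary case. By definition of the Chv\'atal-index, fix a sequence $S = ((\pi^1, \pi^1_0), \ldots, (\pi^c, \pi^c_0))$ of Chv\'atal splits reducing ${\PP}$ to ${\PP}_I$ whose associated index equals $t$; let ${\PP}^0 = {\PP}, {\PP}^1, \ldots, {\PP}^c = {\PP}_I$ be the polytopes it produces, and recall that ${\PP}^0, \ldots, {\PP}^t$ are full-dimensional. Either $t$ is the smallest index $k \in \{0, \ldots, c-1\}$ with $(\pi^{k+1}, \pi^{k+1}_0)$ being ${\PP}^k$-englobing, or there is no such $k$ and $t = c$; these two alternatives (which are exhaustive since $t \in \{0, \ldots, c\}$ by construction) are the two cases of the proof.

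In the first case ($t < c$), the definition says exactly that $(\pi^{t+1}, \pi^{t+1}_0)$ is ${\PP}^t$-englobing. The truncated sequence $((\pi^1, \pi^1_0), \ldots, (\pi^t, \pi^t_0))$ consists of $t$ Chv\'atal splits --- each is still a Chv\'atal split for the polytope on which it acts, since it was so in $S$ --- it produces the same polytope ${\PP}^t$, and it is paired with the ${\PP}^t$-englobing split $(\pi^{t+1}, \pi^{t+1}_0)$. This already gives the lemma, with no appeal to the $2$-hyperplane property.

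In the second case ($t = c$), we have ${\PP}^t = {\PP}^c = {\PP}_I$, and since ${\PP}^t$ is full-dimensional so is ${\PP}_I$. Here I would invoke the $2$-hyperplane property on the improper face ${\PP}_I$ of ${\PP}_I$: because ${\PP}$ is full-dimensional, each facet of ${\PP}$ has dimension $m-1 < \dim({\PP}_I)$, so ${\PP}_I$ is contained in no facet of ${\PP}$, hence ${\PP}_I$ is $2$-partitionable. A full-dimensional polytope has at least two vertices, each an integer point of ${\PP}$, so the nontrivial alternative in the definition of ``$2$-partitionable'' applies and yields a split $(\pi, \pi_0)$ whose two boundary hyperplanes together contain every integer point of ${\PP}$. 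As ${\PP}_I$ is the convex hull of those integer points and the two hyperplanes lie in the slab $\{x \st \pi_0 \le \pi x \le \pi_0 + 1\}$, we get ${\PP}_I \subseteq \{x \st \pi_0 \le \pi x \le \pi_0 + 1\}$, that is $\overline{{\PP}_I(\pi, \pi_0)} = {\PP}_I$, so $(\pi, \pi_0)$ is ${\PP}_I$-englobing. Taking $S$ itself (of length $c = t$) together with $(\pi^{t+1}, \pi^{t+1}_0) := (\pi, \pi_0)$ finishes the proof.

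The argument is mostly bookkeeping, and the only point that needs real care lies in the case $t = c$: one must notice that $t = c$ forces ${\PP}_I$ to be full-dimensional --- which is precisely what lets ${\PP}_I$, viewed as a face of itself, miss every facet of ${\PP}$ so that the $2$-hyperplane property applies to it --- and that the split produced by $2$-partitionability is automatically ${\PP}_I$-englobing (not merely ${\PP}_I$-intersecting), because all vertices of ${\PP}_I$ sit on its two boundary hyperplanes.
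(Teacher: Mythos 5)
Your proof is correct and follows essentially the same route as the paper: the case $t=c$ (equivalently, ${\PP}_I$ full-dimensional) is handled by applying the 2-hyperplane property to the improper face ${\PP}_I$, which lies in no facet of ${\PP}$, so 2-partitionability puts all integer points on the two boundary hyperplanes and the resulting split is ${\PP}^t$-englobing; the case $t<c$ (equivalently, ${\PP}_I$ not full-dimensional) is read directly off the definition of the Chv\'atal-index. Your reorganization of the case split by $t=c$ versus $t<c$ is equivalent to the paper's split on $\dim({\PP}_I)$, so there is nothing substantively new or missing.
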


\begin{proof}
If ${\PP}_I$ is full-dimensional then ${\PP}_I = {\PP}^t$ and ${\PP}_I$ is not
contained in a facet of ${\PP}$. Since ${\PP}$ has the 2-hyperplane
property, ${\PP}_I$ is 2-partitionable and therefore 
an ${\PP}_I$-englobing split exists. This is the split $(\pi^{t+1}, \pi^{t+1}_0)$.

If ${\PP}_I$ is not full-dimensional, we must have $c
> t$ and the definition of $t$ shows that $(\pi^{t+1},
\pi_0^{t+1})$ is ${\PP}^t$-englobing.
\end{proof}

We now prove  a result that implies Theorem~\ref{THM:main}.

\begin{theorem}
\label{THM:SuffGEN}
Let ${\PP}$ be a full-dimensional rational
lattice-free polytope in $\mathbb{R}^m$ such that ${\PP}$ has the
2-hyperplane property, and let $M_0 < M$ be finite numbers. For any
polyhedron $Q \subset R({\PP}, M, M_0)$, there exists a finite number
$q$ such that the height of the rank-$q$ split closure of $Q$ is at most
$M_0$.
\end{theorem}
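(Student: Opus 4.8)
The plan is to induct on the Chv\'atal-index $t$ of $\PP$, using Lemma~\ref{OBS:englobing-split} to reduce the problem to the ``intersecting splits followed by an englobing split'' situation handled by Lemma~\ref{LEM:Dj} and Corollary~\ref{COR:redHeight}. The base case $t=0$ means $\PP=\PP_I$, so $\PP$ itself is full-dimensional and 2-partitionable; there is an englobing split, and a single application of Lemma~\ref{LEM:Dj} (with $n=1$, $\PP^1:=\PP$ and the englobing split) combined with repeated application as in the proof of Corollary~\ref{COR:redHeight} drives the height of $Q$ below $M_0$ in finitely many rounds. For the inductive step, fix a sequence of Chv\'atal splits $S=((\pi^1,\pi^1_0),\ldots,(\pi^c,\pi^c_0))$ achieving the Chv\'atal-index $t$, together with the $\PP^t$-englobing split $(\pi^{t+1},\pi^{t+1}_0)$ guaranteed by Lemma~\ref{OBS:englobing-split}.

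\textbf{Replacing Chv\'atal splits by intersecting splits.} The heart of the argument is that the first Chv\'atal split $(\pi^1,\pi^1_0)$, which takes $\PP=\PP^0$ to $\PP^1$, can be simulated (as far as height reduction of $Q$ is concerned) by a finite sequence of $\PP$-intersecting splits for a suitably enlarged polytope. Write $H^A=\{\pi^1 x=\pi^1_0\}$, $H^B=\{\pi^1 x=\pi^1_0+1\}$ and let $\PP^A:=H^A\cap\PP$. This is a lattice-free polytope in its affine hull $H^A$; I would first apply Theorem~\ref{LEM:enlarge} to $\PP^A$ (inside $H^A$) to get an enlarged polytope $\tilde\PP^A\supseteq\PP^A$ with the same integer points, the same 2-hyperplane status, and with every $(\pi^{H^A}(F),\pi^{H^A}_0(F))$ being $\tilde\PP^A$-intersecting. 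Then Lemma~\ref{LEM:swiping}, applied to (an enlargement $\tilde Q$ of) $Q$ with $\PP:=\tilde\PP^A$ and $p$ any point in $\relint(\PP^1\setminus H^A)$-region close to $H^A$, produces a finite sequence $S_1$ of $\tilde\PP^A$-intersecting splits such that $\tilde Q(S_1)\cap\{\pi^1 x\ge\pi^1_0\}$ sits in the pyramid $\conv(p\cup\tilde\PP^A)$ — a set whose $x$-projection is contained in $\PP^1$ up to an arbitrarily thin sliver. The point $\bar H$-triplet machinery of Lemma~\ref{LEM:ITERGEN2}/\ref{LEM:Dj} then converts this into a guaranteed constant-fraction height drop on $\cl(\PP^0\setminus\PP^1)$: after finitely many passes of $S_1$ the height over $\PP^0\setminus\PP^1$ is pushed down to $M_0$, while over $\PP^1$ it is still bounded by the height $M^1$ of the current polyhedron. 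At that stage the current polyhedron $Q'$ lies in $R(\PP^1,M^1,M_0)$, $\PP^1$ has the 2-hyperplane property (Observation~\ref{OBS:2-hyperplane}, since $\PP^1_I=\PP_I$ and it is still full-dimensional as $t\ge 1$), and $\PP^1$ has Chv\'atal-index $t-1$. The induction hypothesis finishes the job.

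\textbf{Bookkeeping and the main obstacle.} To make the induction clean I would actually prove the slightly more general statement phrased in terms of an arbitrary full-dimensional rational polytope $Q_x$ playing the role of $\PP$ (exactly as in the proof of Corollary~\ref{COR:redHeight}), so that the enlargements produced by Theorem~\ref{LEM:enlarge} and the auxiliary pyramids of Lemma~\ref{LEM:swiping} — none of which need contain $f$ in their interior — can legitimately serve as the ``$Q_x$'' in later invocations of Lemma~\ref{LEM:Dj}; the containments $Q'\subseteq R(\cdot,\cdot,\cdot)$ are maintained at each stage via Lemma~\ref{LEM:QinR}. The main obstacle, and the step needing the most care, is the transition from the set-theoretic conclusion of Lemma~\ref{LEM:swiping} (``the surviving part is inside a thin pyramid over $\tilde\PP^A$'') to a quantitative height bound on $\cl(\PP^0\setminus\PP^1)$: one must choose $p$ close enough to $H^A$ that the pyramid is contained in $R(\PP^1,M',M_0)$ for the relevant $M'$, track how the reduction coefficients $\delta$ of the intermediate polytopes degrade (they stay bounded away from $0$ because there are finitely many of them and each is determined by rational data), and handle the degenerate cases where $\PP^A$ is lower-dimensional or where an englobing split coincides with one of the Chv\'atal splits — cases that Lemma~\ref{LEM:enlarge} and Lemma~\ref{LEM:Dj} are already set up to absorb, but which need to be enumerated explicitly. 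Once this quantitative bridge is in place, the finiteness of $q$ follows by multiplying together the finite bounds from the $t+1$ stages.
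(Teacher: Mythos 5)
Your overall frame (induct on the Chv\'atal-index, simulate the first Chv\'atal split by intersecting splits for enlarged polytopes via Theorem~\ref{LEM:enlarge} and Lemma~\ref{LEM:swiping}, then recurse on $\PP(\pi^1,\pi^1_0)$) matches the paper, but there is a genuine gap at the step where you dispose of the pyramid $\conv(p\cup \tilde{\PP}^A)$. Lemma~\ref{LEM:swiping} only guarantees that, after the swiping splits, the surviving region on the side $\pi^1 x\ge \pi^1_0$ lies \emph{inside} that pyramid; it never cuts into the pyramid itself. Hence the sets $D^i$ coming from your sequence $S_1$ do not cover the full-dimensional piece of $\cl(\PP\setminus\PP(\pi^1,\pi^1_0))$ sitting in the pyramid just above $H^A$, so Lemmas~\ref{LEM:ITERGEN2} and~\ref{LEM:Dj} give no height reduction there, and repeating $S_1$ changes nothing because the same splits keep missing that region. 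Choosing $p$ close to $H^A$ does not rescue this: thinness of the sliver is irrelevant, since points of the sliver lying over $\relint(\PP\cap H^A)$ are interior to $\PP$ and may retain height close to $M$; yet to invoke the Chv\'atal-index induction on $\PP^1=\PP(\pi^1,\pi^1_0)$ you need, via Lemma~\ref{LEM:QinR}, that \emph{every} point outside $\relint(\PP^1)$ --- including that sliver --- already has height at most the new, strictly smaller threshold. So the assertion that ``after finitely many passes of $S_1$ the height over $\PP^0\setminus\PP^1$ is pushed down to $M_0$'' is unjustified.

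The missing idea is the paper's induction on the dimension $m$, which is exactly what removes the pyramid: form the cone $U$ with apex $p$ over $W:=\tilde{\PP}^A$, apply a unimodular map sending the direction $d$ (joining an integer point of $H^A$ to one of $H^B$) to the $x_m$-axis, and apply the theorem in dimension $m-1$ to $W'$ with the cone $U'$ in the role of $Q$ and $x_m$ in the role of $z$; this is also where the 2-hyperplane property of the slice $H^A\cap\PP$ is actually needed (it is inherited from $\PP$ because the integer hull of the slice is a face of $\PP_I$, and any of its faces contained in a facet of $\PP$ is contained in a facet of the slice). The resulting splits, all parallel to $d$, are intersecting for a suitable truncation $U^*$ of $U$ and supply the extra sets $D^{r+1},\dots,D^n$ that make $\cl(\PP\setminus\PP(\pi^1,\pi^1_0))\subseteq D^1\cup\dots\cup D^n$; only then does Lemma~\ref{LEM:Dj} deliver the uniform height drop that feeds the Chv\'atal-index induction (the paper additionally runs a third, auxiliary induction on the number $n$ of these splits). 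A proof that inducts on $t$ alone cannot close this step. A further, smaller slip: Lemma~\ref{LEM:swiping} is a statement about polytopes in the $x$-space --- the paper applies it to $\conv(\PP\cup W)$ --- not to the $(x,z)$-polyhedron $Q$ or an enlargement of it.
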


\begin{proof}
If there exists an ${\PP}$-englobing split $(\pi, \pi_0)$, then for any 
polyhedron
$Q \subset R({\PP}, M, M_0)$, the height of $Q(\pi, \pi_0)$ is at most $M_0$ and
the theorem holds.

We prove the theorem
by induction on the dimension $m$ of ${\PP}$. If $m = 1$, ${\PP}$ cannot contain
more than two integer points and thus an ${\PP}$-englobing split
 exists and the result holds.

Assume now that ${\PP}$ has dimension $m \ge 2$ and that the result
holds for polytopes with strictly smaller dimension. Let $t$ be the
Chv\'atal-index of ${\PP}$. We make a second induction on $t$. More
precisely, the induction hypothesis is that the theorem holds for
any full-dimensional rational lattice-free polytope $K \subset
\mathbb{R}^k$ with either $k < m$, or $k=m$ and Chv\'atal-index $t'
< t$, for any finite numbers $M^*_0 < M^*$, and for any $Q^* \subset
R(K, M^*, M^*_0)$.

Assume first that $t=0$. Lemma~\ref{OBS:englobing-split} shows that
there exists an ${\PP}$-englobing split and therefore
the theorem holds. Assume now that $t > 0$. We consider the first 
Chv\'atal split $(\pi^1, \pi_0^1)$ in a sequence 
$(\pi^i, \pi_0^i)$, $i = 1, \ldots , t$ leading from ${\PP}$ to ${\PP}_I$.
The following claim, if true, shows that the effect of applying
$(\pi^1, \pi^1_0)$ on ${\PP}$ can be obtained by applying a finite sequence
of intersecting splits for polytopes satisfying the statement of
Lemma~\ref{LEM:Dj}.

{\sc Claim 1.}
Let $(\pi^1, \pi_0^1)$ be a
Chv\'atal split for ${\PP}$ such that ${\PP}(\pi^1, \pi^1_0) \not= {\PP}$.
Then there exist a finite number $n \geq 1$, polytopes
${\PP}^i$, and splits $(\mu^i, \mu^i_0)$ such that $(\mu^i, \mu^i_0)$ is
${\PP}^i$-intersecting for $i = 1, \ldots, n$, and the following
properties hold, where $D^i = \cl({\PP}^i \setminus {\PP}^i(\mu^i, \mu^i_0))$:

\begin{itemize}
\item[(i)] ${\PP} \subseteq {\PP}^1$;
\item[(ii)] ${\PP} \setminus (D^1 \cup \ldots \cup D^i) \subseteq
{\PP}^{i+1}$ for $i = 1, \ldots , n-1$;
\item[(iii)] $\cl({\PP}\setminus {\PP}(\pi^1, \pi^1_0)) \subseteq
D^1 \cup \ldots \cup D^{n}$.
\end{itemize}

Before proving the claim, we show that it implies the theorem. We
use a third level of induction, this time on the number $n$ from
the above claim.
We assume that the theorem holds for any full-dimensional rational lattice-free
polytope $K \subset \mathbb{R}^k$, for any finite numbers $M^*_0 <
M^*$, for any $Q^* \subset R(K, M^*, M^*_0)$ with either
\begin{enumerate}
\item $k < m$, or
\item $k=m$ and $K$ has Chv\'atal-index $t' < t$, or
\item $k=m$, $K$ has Chv\'atal-index $t$ and there is a sequence of
length $n' < n$ satisfying Claim~1 for the first of the
$t$ Chv\'atal splits.
\end{enumerate}

As above, we assume that ${\PP}$ has Chv\'atal index $t > 0$ and
$Q \subset R({\PP}, M, M_0)$. Let $(\pi^1, \pi_0^1)$ be a Chv\'atal split such
that ${\PP}(\pi^1, \pi^1_0)$ has Chv\'atal index $t-1$.
Consider the sets defined in Claim~1.  By
Lemma~\ref{LEM:Dj} for ${\PP}$, ${\PP}^i$, $(\pi^i, \pi^i_0) := (\mu^i,
\mu^i_0)$ for $i = 1, \ldots, n$, $M^* := M$, $M^*_0 := M_0$ and $Q$,
we obtain $\Delta > 0$
and a finite sequence of splits $S$ such that after applying the
sequence $S$ to $Q$, one of the following cases holds:

\begin{itemize}
\item[(i)] the height of all points in $D^1 = \cl({\PP}^1 \setminus {\PP}^1(\mu^1,
\mu^1_0))$ is at most $M_0$ with respect to $Q(S)$.

\item[(ii)] the height of all points in $D^1 \cup \ldots \cup D^{n}$ is at
most $H - \Delta \cdot (M-M_0) $ with respect to $Q(S)$, where $H$ denotes the height of $Q$;
\end{itemize}

{\em Case (i):} We use the induction hypothesis on
$K :=\cl({\PP} \setminus D^1)$. Note that, by Lemma~\ref{LEM:QinR},
$Q(S) \subseteq R(K, H', M_0)$, where
$H'$ is the height of $Q(S)$, since the height of all points in
$D^1$ is at most $M_0$ with respect to $Q(S)$, and this is also the case
for all points not in the interior of ${\PP}$.

If $n=1$, then $K \subseteq {\PP}(\pi^1,\pi^1_0)$ by (iii) of
Claim~1. Since
${\PP}(\pi^1,\pi^1_0)$ has Chv\'atal index $t-1$, it
satisfies the second induction hypothesis and the result holds.

If $n \geq 2$, observe that ${\PP}^i$ and $(\mu^i, \mu^i_0)$ for $i=2,
\ldots, n$ show that the claim is satisfied with a sequence of
$n -1$ splits for $K$. By definition of the Chv\'atal index, $K$
is full-dimensional and therefore $K$ satisfies the third induction hypothesis.

Therefore, by induction, there exists a finite $q'$ such that the height of the
rank-$q'$ split closure $Q^1$ of $Q(S)$ is at most $M_0$. Since the split
closure of a polyhedron is a polyhedron~(Cook et al.~\cite{COOK}), this implies
the existence of a finite sequence $S'$ of splits that gives $Q^1$
when applied to $Q(S)$. Applying the sequence $S$ followed by the
sequence $S'$ on the polyhedron $Q$, we reduce its height to at most
$M_0$, proving the theorem.

\bigskip
{\em Case (ii):} If $M_0 \ge H - \Delta \cdot (M-M_0)$, then in particular
the height of all points in $D^1$ is at most $M_0$ with respect to $Q(S)$ and
Case (i) applies. Therefore we may assume that $M_0 \le H - \Delta \cdot (M-M_0)$.
Case (ii) combined with conclusion (iii) in
Claim~1 that $\cl({\PP}\setminus {\PP}(\pi^1, \pi^1_0))
\subseteq D^1 \cup \ldots \cup D^{n}$, implies that the
height of all points in $\cl({\PP}\setminus {\PP}(\pi^1,\pi^1_0))$ is at
most $H - \Delta \cdot (M-M_0)$ with respect to $Q(S)$.
Recall that all points not in the interior of ${\PP}$ have height at most
$M_0 \le H - \Delta \cdot (M-M_0)$ with respect to $Q$ and thus also with respect to
$Q(S)$. Let $H(S)$ be the height of $Q(S)$. Then, by Lemma~\ref{LEM:QinR},
$Q(S) \subseteq R({\PP}(\pi^1,\pi^1_0), H(S), H-\Delta \cdot (M-M_0))$. Since
the Chv\'atal-index of ${\PP}(\pi^1,\pi^1_0)$ is $t-1$, we can apply
the second induction hypothesis to $Q(S)$. Therefore, there exists
a finite sequence $S^1$ of splits that gives a polyhedron $Q^1$ of height at most $H-\Delta \cdot (M-M_0)$
when applied to $Q(S)$. We can get $Q^1$ from $Q$ by applying the sequence $S$ followed by $S^1$.
Note that $Q^1 \subseteq Q \subseteq R({\PP}, M, M_0)$ so Lemma~\ref{LEM:Dj} can be applied to $Q^1$
with the same $\Delta$ and sequence of splits $S$, and Case (i) or (ii) will hold,
where the only change in Case (ii) is the height of $Q^1$, which is at most $H-\Delta \cdot (M-M_0)$.
Therefore let us apply the sequence $S$ on $Q^1$. If we end up in Case (i) then we are done by the arguments in
Case (i). Otherwise we have the situation of Case (ii) where the height of all points in $D^1 \cup \ldots \cup D^{n}$ is at most $H - 2\Delta \cdot (M-M_0) $ with respect to $Q^1(S)$.
If $M_0 \ge H - 2\Delta \cdot (M-M_0)$, the theorem follows as in Case (i).
If $M_0 \le H - 2\Delta \cdot (M-M_0)$, we can apply the second induction hypothesis to $Q^1(S)$.
Therefore, there exists a finite sequence $S^2$ of splits that we can apply to $Q^1(S)$ to
obtain $Q^2$ with height at most $H - 2\Delta \cdot (M-M_0)$.
We can get $Q^2$ from $Q$ by applying the sequence $S$ followed by $S^1$ followed by $S$ followed by $S^2$.
Continuing this process at most $\lceil \frac{H - M_0}{\Delta \cdot (M-M_0)}
\rceil$ times, we end up in Case (i) at some point, proving the
theorem.

\bigskip
It thus remains to prove the claim.

\begin{proof}[Proof of Claim~1]
Let $H^A$ and $H^B$ be the boundary hyperplanes of the Chv\'atal split
$(\pi^1, \pi_0^1)$ such that ${\PP}^* := H^A \cap {\PP} \neq \emptyset$.
By assumption, ($\pi^1,\pi^1_0$) is not ${\PP}$-englobing and
${\PP}(\pi^1, \pi^1_0) \not= {\PP}$. Therefore, ${\PP}^*$ is not a face of ${\PP}$ and thus
$\dim({\PP}^*) = m-1$ and ${\PP}^*$ is lattice-free in its affine space.
Moreover, both $H^A$ and $H^B$ are lattice subspaces.

Let $W$ be obtained using Theorem~\ref{LEM:enlarge} on ${\PP}^*$,
and let ${\PP}_s := \conv({\PP} \cup W)$. Let $d$ be a vector joining
an integer point in $H^A$ to an integer point in $H^B$. Let $q \in
\relint(W)$ and let $p$ be a point on the boundary of ${\PP}_s$
between $H^A$ and $H^B$ such that the line joining $q$ and $p$ is in 
the direction $d$ (Figure~\ref{FIG:unimod}, left). Apply
Lemma~\ref{LEM:swiping} to $Q := {\PP}_s$, ${\PP} := W$,  $p$, and $(\pi^1, \pi_0^1)$. This
yields a sequence $(\mu^1, \mu^1_0), \ldots, (\mu^r, \mu^r_0)$ of
$W$-intersecting splits with the following property.
Let ${\PP}^1 = {\PP}_s$ and, for $i = 1, \ldots, r$, define ${\PP}^{i+1}$ as the polytope obtained from ${\PP}^i$ by
applying the split $(\mu^i, \mu^i_0)$.  Then Lemma~\ref{LEM:swiping} shows that ${\PP}^{r+1} \cap \{x \ | \
\pi^1x \geq \pi^1_0\}$ is contained in $\conv (p \cup W)$.

\begin{figure}[ht]
\centering \scalebox{0.8}{
\includegraphics{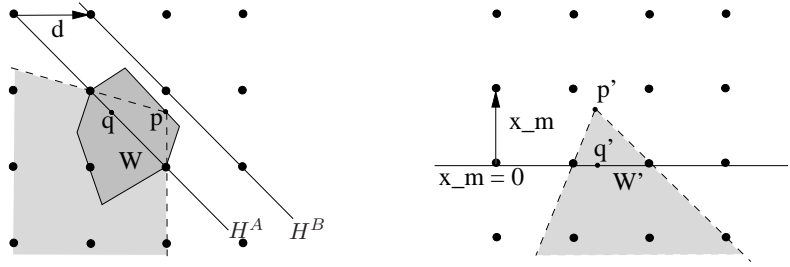}}
\caption{Illustration for the proof of Theorem~\ref{THM:SuffGEN}. On the left,
polytope ${\PP}_s \subseteq \mathbb{R}^2$ is shaded, $W$ is a segment, and part of
cone $U$ is lightly shaded with its boundary in dashed lines. On the right, 
the images of $W$, $U$, $p$, and $q$ after
applying the unimodular transformation $\tau$ are depicted.}
\label{FIG:unimod}
\end{figure}

Let $U$ be the cone with apex $p$ and rays joining $p$ to the
vertices of $W$.  Observe that $U \supseteq {\PP}^{r+1}$.
Let $\mathcal{T}: \mathbb{R}^{m} \rightarrow \mathbb{R}^{m}$
be a unimodular transformation mapping the vector $d$ to
$(0, \ldots, 0, 1) \in \mathbb{R}^m$ and mapping $H^A$ to
$\mathbb{R}^{m} \cap \{x \ | \ x_m = 0\}$ (Figure~\ref{FIG:unimod}, right). 
Note that
$\mathcal{T}(\mathbb{Z}^m \cap H^A) = \mathbb{Z}^{m} \cap \{x \ | \ x_m = 0\}$
by Corollary 4.3a in Schrijver~\cite{sch}.

Let $p' :=  \mathcal{T}(p)$, $U' := \mathcal{T}(U)$, and
$W' := \mathcal{T}(W)$.
As ${\PP}$ has the 2-hyperplane property, and as
${\PP}^*$ is the intersection of ${\PP}$ with one boundary hyperplane of a Chv\'atal
split, the convex hull $G$ of the integer points in ${\PP}^*$ is a face of ${\PP}_I$.
Moreover, if a face of $G$ is contained in a facet of ${\PP}$ then it is contained
in a facet of ${\PP}^*$. It follows that ${\PP}^*$ has the 2-hyperplane property
and by applying Theorem~\ref{LEM:enlarge} to ${\PP}^*$ we obtain that
so does $W$. As $\mathcal{T}$ is unimodular, the transformation
$\mathcal{T}$ maps splits in $H^A$ to splits in
$\mathbb{R}^{m} \cap \{x \ | \ x_m = 0\}$. Therefore, $W'$ also has the
2-hyperplane property. We will now apply the first induction
hypothesis to $K := W' \subseteq \mathbb{R}^{m-1}$,
$Q^* := U'\subseteq \mathbb{R}^m$, $M^*$ equals the height of $U'$ and
$M^*_0 = 0$. Here, the crucial point to follow this induction step
is to understand that variable $x_m$ plays the role of variable $z$.
Thus, all splits obtained from this induction are parallel to the $x_m$-axis.
Note that, by Lemma~\ref{LEM:QinR},
$Q^* \subseteq R(K, M^*, M^*_0)$. By induction, there exists a finite
sequence of splits, all parallel to the $x_m$-axis, whose application on
$U'$ reduces its height to $0$, or equivalently, removes $\conv(p' \cup W')$.
Using the inverse of $\mathcal{T}$, this yields a
sequence $(\mu^{r+1}, \mu^{r+1}_0), \ldots, (\mu^{n},
\mu^{n}_0)$ of splits whose boundary hyperplanes are all parallel to $d$
and such that its application to $U$ removes $\conv(p \cup W)$. 
Since $-d$ is in the interior of the full-dimensional
recession cone of $U$, the sequence $(\mu^{r+1}, \mu^{r+1}_0),
\ldots, (\mu^{n}, \mu^{n}_0)$ is $U$-intersecting. One last hurdle remains,
as $U$ is unbounded and we need to construct polytopes ${\PP}^i$ such that
$(\mu^i, \mu^i_0)$ is ${\PP}^i$-intersecting. To this end, let $U^{r+1}, \ldots, U^n$ 
be the polyhedra obtained by applying the sequence 
$(\mu^{r+1}, \mu^{r+1}_0), \ldots, (\mu^{n}, \mu^{n}_0)$ to $U$. For $j = r+1,
\ldots, n$, let $w^{1, j}$ and $w^{2, j}$ be two integer points in
$U^j$ contained in each of the two boundary
hyperplanes of $(\mu^j, \mu_0^j)$ respectively. It is possible to truncate $U$
into a polytope $U^*$ by intersecting $U$ with a half-space bounded
by a hyperplane parallel to $H^A$, such that ${\PP}^{r+1}$ and all the points
$w^{1,j}, w^{2,j}$ are contained in $U^*$. As all these points are
integer, they are contained in the polytope obtained
by applying the sequence $(\mu^{r+1}, \mu^{r+1}_0), \ldots,
(\mu^{n}, \mu^{n}_0)$ of splits to $U^*$, proving that the
sequence is a $U^*$-intersecting sequence.

Redefine ${\PP}^{r+1}$ to be equal to $U^*$ and note that this redefinition is an enlargement. For  $i = r+1, \ldots, n$, define ${\PP}^{i+1}$ as
the polytope obtained from ${\PP}^i$ by applying the split $(\mu^{i},
\mu^{i}_0)$. The
sequence $(\mu^i, \mu^i_0)$ and polytopes ${\PP}^i$ for $i = 1, \ldots,
n$ are those used to show that Claim~1 is
satisfied. Indeed, by construction, ${\PP} \subseteq {\PP}_s = {\PP}^1$, therefore
point (i) holds. Observe that since ${\PP} \subseteq {\PP}^1$,
${\PP} \setminus (D^1 \cup \ldots \cup D^i) \subseteq {\PP}^1
\setminus (D^1 \cup \ldots \cup D^i) = {\PP}^{i+1}$ for $i = 1, \ldots, r-1$.
Moreover, because the redefinition of ${\PP}^{r+1}$ is an enlargement,
${\PP}^r \setminus D^r \subseteq {\PP}^{r+1}$. Consequently,
${\PP}^1 \setminus (D^1 \cup \ldots \cup D^i) \subseteq {\PP}^{i+1}$ for
$i = 1, \ldots, r$. Also note that ${\PP}^i \setminus D^i = {\PP}^{i+1}$ for
$i = r+1, \ldots, n$. Therefore, point (ii) holds. Finally, point (iii)
holds since ${\PP}^1\setminus (D^1 \cup \ldots \cup D^r)$ is contained in
$\conv(p \cup W)$ and $D^{r+1} \cup \ldots \cup D^n$ contains
$\conv(p \cup W)$. Therefore $D^1 \cup \ldots \cup D^n$ contains
$\cl({\PP}\setminus {\PP}(\pi^1, \pi^1_0))$.
\end{proof}

This completes the proof of Theorem~\ref{THM:SuffGEN}  \end{proof}

We can now prove Theorem~\ref{THM:main}, completing
the proof of Theorem~\ref{THM:GenDim}.

\begin{proof}[Proof of Theorem~\ref{THM:main}]
Since $f \in \inte({\PP})$ and ${\PP}$ satisfies the assumptions of
Theorem~\ref{THM:SuffGEN}, we have that ${\QQ}^{\PP} \subset R({\PP}, 1, 0)$.
Theorem~\ref{THM:SuffGEN} implies that there
exists a finite number $q$
such that the height of the rank-$q$ split closure of ${\QQ}^{\PP}$ 
is at most zero.
By Observation~\ref{OBS:rankQL}, this implies that the split rank with 
respect to $\QQ$ of the ${\PP}$-cut is at most $q$.
\end{proof}

\section*{Acknowledgments} We would like to thank Christian Wagner
and two anonymous referees for their helpful comments on the first draft
of this paper. 

\small


\begin{thebibliography}{99}
\bibitem{alww} K. Andersen, Q. Louveaux, R. Weismantel and L. A. Wolsey,
\emph{Cutting Planes from Two Rows of a Simplex Tableau}, Proceedings
of IPCO XII, Ithaca, New York (June 2007),
Lecture Notes in Computer Science 4513 (2007), 1--15.

\bibitem{bal} E. Balas, \emph{Intersection Cuts - A New Type of Cutting 
Planes for Integer Programming}, Operations Research 19 (1971), 19--39.


\bibitem{barv} A. Barvinok, \emph{A Course in Convexity}, Graduate Studies 
in Mathematics, vol. 54, 
American Mathematical Society, Providence, Rhode Island, 2002.

\bibitem{bc} V. Borozan and G. Cornu\'ejols, \emph{Minimal Valid
Inequalities for Integer Constraints}, Mathematics
of Operations Research 34 (2009) 538-546.

\bibitem{CCPS} V. Chv\'atal, \emph{Edmonds Polytopes and a Hierarchy of 
Combinatorial Problems}, Discrete Mathematics 4 (1973) 305--337.

\bibitem{COOK}
W. Cook, R. Kannan and A. Schrijver,
\emph{Chv\'atal Closures for Mixed Integer Programming Problems},
Mathematical Programming 47 (1990) 155--174.

\bibitem{cm}  G. Cornu\'ejols and F. Margot, \emph{On the Facets of Mixed
Integer Programs with Two Integer Variables and Two Constraints},
Mathematical Programming 120 (2009) 419--456.

\bibitem{DeyLou} S.S. Dey and Q. Louveaux,
\emph{Split Rank of Triangle and Quadrilateral Inequalities},
Working paper (2009).

\bibitem{dw} S.S. Dey and L.A. Wolsey, \emph{Lifting Integer Variables in
Minimal Inequalities Corresponding to Lattice-Free Triangles},
IPCO 2008, Bertinoro, Italy (May 2008), Lecture Notes in Computer
Science 5035 (2008) 463--475.

\bibitem{EisSchu} F. Eisenbrand and A.S. Schulz,
\emph{Bounds on the Chv\'atal Rank of Polytopes in the 0/1 Cube},
Combinatorica 23, (2003) 245--261.

\bibitem{gom69} R.E. Gomory, \emph{Some Polyhedra Related to Combinatorial
Problems}, Linear Algebra and Applications 2 (1969) 451--558.

\bibitem{gj} R.E. Gomory and E.L. Johnson, \emph{Some Continuous Functions
Related to Corner Polyhedra~I}, Mathematical Programming 3 (1972) 23-85.

\bibitem{lovasz} L. Lov\'asz, \emph{Geometry of Numbers and Integer 
Programming}, Mathematical Programming: Recent Developments and
Applications, M. Iri and K. Tanabe eds., Kluwer (1989) 177--201.

\bibitem{sch} A. Schrijver, \emph{Theory of Linear and Integer
Programming}, Wiley (1986).

\bibitem{Seroul} R. S\'eroul,
\emph{Programming for Mathematicians}, Springer (2000).

\end{thebibliography}
\end{document}